\renewcommand{\p@subfigure}{}
\title[$p$-Parabolicity on Graphs]{Characterizations of $p$-Parabolicity on Graphs}
\author{Andrea Adriani}
\address{Andrea Adriani, Vanguard Center, University Mohammed VI Polytechnic, 11103 Rabat/Salé, Morocco}
\email{andrea.adriani@um6p.ma}
\author{Florian Fischer}
\address{Florian Fischer, Institute for Applied Mathematics, University of Bonn, Endenicher Allee 60, 53115 Bonn, Germany}
\email{fischer@iam.uni-bonn.de}
\author{Alberto G. Setti}
\address{Alberto G. Setti, DiSAT, Universit\`a dell’Insubria, Via Valleggio 11, 22100 Como,
Italy}
\email{alberto.setti@uninsubria.it}
\newtheorem{theorem}{Theorem}[section]
\newtheorem{lemma}[theorem]{Lemma}
\newtheorem{proposition}[theorem]{Proposition}
\newtheorem{corollary}[theorem]{Corollary}
\theoremstyle{definition}
\newtheorem{example}[theorem]{Example}
\newtheorem{remark}[theorem]{Remark}
\newtheorem{definition}[theorem]{Definition}
\numberwithin{equation}{section}
\newcommand{\norm}[1]{\left\lVert #1 \right\rVert} 
\newcommand{\abs}[1]{\left\lvert #1\right\rvert} 
\newcommand{\set}[1]{\left\{ #1\right\} }
\newcommand\ip[2]{\langle #1, #2 \rangle}
\newcommand{\p}[1]{\left( #1 \right)^{\langle p-1 \rangle}}
\newcommand{\sse}{\subseteq}
\renewcommand{\epsilon}{\varepsilon}
\renewcommand{\phi}{\varphi}
\newcommand{\NN}{\mathbb{N}}
\newcommand{\ZZ}{\mathbb{Z}}
\newcommand{\TT}{\mathbb{T}}
\newcommand{\RR}{\mathbb{R}}
\newcommand{\dd}{\mathrm{d}}
\DeclareMathOperator{\sgn}{sgn}
\DeclareMathOperator{\cc}{cap}
\DeclareMathOperator{\Div}{div}
\newcommand{\FF}{F}
\newcommand{\DD}{D}
\newcommand{\E}{\mathcal{E}}
\newcommand{\Hmm}[1]{\leavevmode{\marginpar{\tiny%
			$\hbox to 0mm{\hspace*{-0.5mm}$\leftarrow$\hss}%
			\vcenter{\vrule depth 0.1mm height 0.1mm width \the\marginparwidth}%
			\hbox to 0mm{\hss$\rightarrow$\hspace*{-0.5mm}}$\\\relax\raggedright #1}}}
\begin{document}

\begin{abstract}
We study $p$-energy functionals on infinite locally summable graphs for $p\in (1,\infty)$ and show that many well-known characterizations for a parabolic space are also true in this discrete, non-local and non-linear setting. Among the characterizations are an Ahlfors-type, a Kelvin-Nevanlinna-Royden-type, a Khas'minski\u{\i}-type and a Poincar\'{e}-type characterization. We also illustrate  some applications and describe examples of graphs which are locally summable but not locally finite. {Finally,  we study the obstacle problem for the $p$-Laplacian using an approximation procedure by finite graphs in the summable, not necessarily locally finite,  case. This is then utilized to give an alternative proof of the  Khas'minski\u{\i}-type characterization. }
\\
	\\[4mm]
	\noindent  2020  \! {\em Mathematics  Subject  Classification.}
	Primary   39A12; Secondary  31C20; 31C45; 35R02.
	\\[4mm]
	\noindent {\em Keywords.} quasi-linear potential theory on graphs, $p$-Laplace operator, $p$-parabolicity
    \\[4mm]
	\noindent FF is supported by the DFG, and thanks Universit\`a dell’Insubria for the kind hospitality during several stays. Moreover, FF thanks the Heinrich Böll foundation for supporting one of the stays. AGS is member of the GNAMPA-INDAM group {``Equazioni Differenziali e Sistemi Dinamici''}. We would also like to thank an anonymous reviewer for valuable comments on an earlier version of this paper.  
\end{abstract}

\maketitle

\section{Introduction}

We consider an infinite weighted graph $G=(X,b,m)$, where $X$ is a countable infinite set of vertices, the edge measure $b$ is a symmetric locally summable function on $X\times X\to [0,\infty)$ which vanishes on the diagonal and $m$ is a measure of full support.

Analysis on infinite weighted graphs has undergone an impressive growth in recent years. 
Graphs can be viewed as discretizations of differentiable manifolds,  so that results obtained in the graph setting may shed light to what happens in the continuous setting and vice-versa.

However, the geometry of weighted graphs is rich enough to allow for a variety of different phenomena which sometimes do not have exact  counterparts even  on weighted Riemannian manifolds, in part due to the possible complete decoupling between vertex and edge measures.

Moreover, the natural differential operators  of differential geometry correspond to difference operators on graphs, which are intrinsically non-local and therefore some techniques which can be successfully used on manifolds do not carry over to graphs and alternative methods must be devised.

Most of this investigation involves the weighted discrete Laplacian $\Delta$, namely  the operator 
associated to the energy functional
\[
\mathcal{E} (f)=\frac 12 \sum_{x,y\in X} b(x,y)\abs{f(x)-f(y)}^2 ,
\]
which acts as 
\[
\Delta f(x)=\frac 1{m(x)}\sum_{y\in X} b(x,y)\bigl(f(x)-f(y)\bigr),
\]
whenever the expression makes sense.

Various generalizations to the case where $p\ne 2$ are possible. In this paper we consider a version of that studied by Nakamura and Yamasaki \cite{NY76} and by Yamasaki \cite{Y77}: 
for  $p\in(1,\infty)$,  we let  $\E_p (f)$ be the $p$-energy functional 
\[
\E_p (f)= \frac 12 \sum_{x,y\in X} b(x,y)\abs{f(x)-f(y)}^p,%
\]
which induces  the associated 
$p$-Laplacian, defined by
\[
\Delta_pf(x):=\frac{1}{m(x)}\, \sum_{y\in X} b(x,y)\abs{f(x)-f(y)}^{p-2}(f(x)-f(y))
\]
for all functions  $f$ such that the right hand side makes sense.

The $p$-Laplacian and its discrete counterparts have been studied extensively, both from the theoretical point of view - in particular, dealing with  spectral theory, eigenvalues estimates, Cheeger type inequalities, function theory and elliptic and parabolic equations -  and in view of applications to non-linear diffusion, image processing,  data clustering. Confining ourselves to the discrete setting we mention \cite{CC14,CZ21,GHJ21,HS21, HoS, HoS2,HuMu15,HuWa20,KM16,KimChung2010, Mu13,PC11b, Y77} for the former and  \cite{BH09, EDT17, ELB08, ETT15} for the latter.

In this paper we investigate  potential theoretic properties of the $p$-Laplacian, concentrating on parabolicity. As in the continuous case, there are deep connections and interplays between  properties of the $p$-energy functional, its associated $p$-Laplacian and the geometry of the underlying graph.

For $p=2$, the  parabolicity of the Laplacian can be defined and characterized in a numbers of equivalent ways: in terms of recurrence of the induced process, of  the constancy of bounded below super-harmonic functions,   of the capacity induced by the energy functional, and of the validity of Green's formulae or of maximum principles at infinity.

This  holds also true in the case of the $p$-Laplacian for $p\ne 2$ and, starting from the seminal paper by Yamasaki \cite{Y77}, several previous works were devoted to the investigation of various facets of $p$-parabolicity, see e.g. \cite{F:AAP,F:GSR, Y84, MPR22, MPR24a, MPR24,Soa2, Soa1, SY93Class, SY93Para, Prado}.

The original motivation for writing this paper was to show that $p$-parabolicity, which we define in terms of the vanishing of the variational $p$-capacity of finite sets, is equivalent to the existence of a Khas'minski\u{\i} potential, namely a function $\kappa$ that blows up at infinity and satisfies $\Delta_{p}\kappa \geq 0$ outside a finite set, in this non-linear, non-local and non-locally finite setting. While the sufficiency condition depends on the maximum principle at infinity,  for the other implication we construct a potential via variational methods. An alternative proof using solutions to the obstacle problem on graphs similar to the corresponding proof on manifolds in \cite{Valtorta} is given in the appendix.  {Solutions to the obstacle problem are obtained by variational methods, using an approximation procedure by finite graphs in the summable, not necessarily locally finite,  case.}

Along the way, we realized that many well-known characterizations of $p$-parabolicity were not written down in this general setting. So we decided to  provide a comprehensive treatment of the  parabolicity of the $p$-Laplacian on  weighted non-locally finite graphs. This also gave us the possibility to prove an area-type sufficient  condition for $p$-parabolicity which is a characterization on model graphs and seems to be new for $p\neq 2$.

We show these characterizations, on the one hand because we need some of these results for the proof of the  Khas'minski\u{\i} criterion, and also to provide a good reference for applications.

The paper is organized as follows. In Section~\ref{sec:setting} we introduce the notation and the main definitions, and collect some functional analytic results that will be used later in the paper. In particular, we define the $p$-energy functional and the associated  $p$-Laplacian and $p$-capacity. We then define $p$-parabolicity in terms of the vanishing of the $p$-capacity on all finite subsets and recall that it can be characterized  by the existence of so called null sequences, namely, sequences of functions pointwise converging to $1$ with $p$-energy tending to $0$.   Section~\ref{s:chara} is the heart of the paper and describes a number of different characterizations of $p$-parabolicity. We begin with characterizations  which are well-known in the locally finite case:  in terms of the existence of a $p$-Green's function (Section~\ref{s:Green}), in terms of function theoretic properties of the space of function of finite $p$-energy (Section~\ref{s:func}), and of Liouville properties  for non-negative $p$-superharmonic functions (Section~\ref{s:super}). Sections~\ref{sec:KNR} to \ref{sec:K} describe some of our more original results. In Section~\ref{sec:KNR} we describe the extension of a Kelvin-Nevanlinna-Royden characterization to the non-locally finite case, { and, in Section~\ref{s:Poincare}, we provide a characterization in terms of the validity of a Poincar{\'e} type-inequality}. In Section~\ref{sec:A} we show a ``maximum principle at infinity''-type characterization which goes back to Alhfors, \cite{Ahlfors}, for  Riemann surfaces and for $p=2$, and is new in  the case of graphs. 
In Section~\ref{sec:LPS} we generalize to all $p>1$ a characterization in terms of  the vanishing of a suitably defined capacity of subsets of the boundary at infinity which was recently obtained in the linear case in \cite{LPS23}. Section~\ref{s:area} describes a sufficient condition for the $p$-parabolicity of locally finite graphs which is the exact counterpart of an area condition valid on Riemannian manifolds and which becomes a characterization in the case of model graphs.  Section~\ref{sec:WMP} gives a  weak maximum principle at infinity characterization of $p$-parabolicity. There we also show that the existence of a Khas'minski\u{\i} potential implies $p$-parabolicity. Finally, in Section~\ref{sec:K} we show that the Khas'minski\u{\i} condition introduced in the previous section is, in fact, also necessary. In the appendix we show an alternative proof adapting the idea given in \cite{Valtorta}.

The final section describes some applications of the results obtained in several examples, focusing on locally summable but not locally finite graphs.

\section{Setting the Scene}\label{sec:setting}

\subsection{Graphs and Laplace Operators}
A \emph{(weighted) graph} $G$  is a triple $G=(X,b,m)$, where
\begin{itemize}
\item $X$ is a countable infinite set;
\item $ b$ is a symmetric function  $b\colon X\times X \to [0,\infty)$ with zero diagonal which is locally summable,  i.e., the vertex degree function satisfies \[ \deg(x):=\sum_{y\in X}b (x,y)<\infty, \qquad x\in X;\]
\item $ m \colon X \to (0,\infty) $ is a measure of full support.
\end{itemize}

 The elements of $X$ are called \emph{vertices} and the function $b$ represents weights of edges between vertices. Two vertices $x, y$ are said to be \emph{connected}, and we write $ x \sim y $, if $b(x,y)>0$. A set $V \sse X$ is called \emph{connected} if for every two vertices $x,y\in V$ there are vertices ${x_0,\ldots ,x_n \in V}$, such that $x=x_0$, $y=x_n$ and $x_{i-1}\sim x_i$ for all $i\in\set{1,\ldots, n-1}$. Throughout this paper we will always assume that
 $X$ is connected. The set of edges $E_{V}=E_{V,b}$ on $V$ are given by
 \[E_V:=\set{(x,y)\in V\times V: x\sim y}.\]
 We set $E=E_X$.

 For $V\sse X$, let $\partial_{e} V=\set{y\in X\setminus V : y\sim z\in V}$ and $\partial_{i} V=\set{y\in  V : y\sim z\in X\setminus V}$ denote the \emph{exterior} and \emph{interior boundary} of $V$, respectively. The sets  $\mathring{V}=V\setminus \partial_i V$  $\overline{V}:=V\cup \partial_e V$  are the \emph{interior} and \emph{closure} of $V$ (these are obviously not meant in topological sense).

A graph $G$ is called \emph{locally finite} if for all $x\in X$
\[\# \set{y\in X : y\sim x  }< \infty.\]

A function $f\colon \RR\to \RR$ is \emph{odd} if $f(t)=-f(-t)$, $t\in \RR$.

 The space of real valued functions on $V\subseteq X$ is denoted by $C(V)$ and the space of functions with compact (= finite) support in $V$ is denoted by $ C_c(V)$. We consider $C(V)$ to be a subspace of $C(X)$ by extending the functions of $C(V)$ by zero on $X\setminus V$.

For all $1\leq p < \infty$, $V\sse X$, we define 
\begin{align*}
\ell^p(V,m)&:=\bigl\{f\in C(X): \norm{f}_{p,m, V}^p:=\sum_{x\in V}\abs{f(x)}^pm(x)<\infty\bigr\},\quad \text{and} \\
\ell^{\infty}(V)&:=\bigl\{f\in C(X): \sup_{x\in V}\abs{f(x)}<\infty\bigr\}.
\end{align*}
Note that $(\ell^p(X,m),\norm{\cdot}_{p,m})$ is a reflexive Banach space for $p\in (1,\infty)$, and a Banach space for  $p=1,\infty$. {The $\ell^p$-spaces on $X\times X$ are defined in a similar way.}

 Let $f,g\in C(X)$, then $f\vee g$, $f\wedge g$, $f_-$ and $f_+$ denote {the pointwise maximum and  minimum of $f$ and $g$, and  the negative  and positive part of $f$, respectively}. By $\mathds{1}_V$ we denote the characteristic function on $V\sse X$, and for singletons $\set{x}\sse V$, we set $\mathds{1}_x:=\mathds{1}_{\set{x}}$, and identify $1=\mathds{1}_X$ when no misinterpretation can occur.

We also define the linear difference operator $\nabla { \colon C(X)\to C(X\times X)}$ by the formula
\[\nabla_{x,y}f=f(x)-f(y), \qquad x,y\in X, f\in C(X).\]

We now introduce {Laplace}-type operators (which are sometimes also called Schrödinger operators): For $p\in [1,\infty)$ and  $V\sse X$, the \emph{formal space} $ \FF^p(V)=\FF^p_{b}(V) $ is defined by
\[
\FF^p(V):= \{ f\in C(X): \sum_{y\in X} b(x,y)\abs{\nabla_{x,y}f}^{p-1} < \infty  \mbox{ for all } x\in V  \},\\
\]
and we write $\FF^p=F^p(X)$. Note that on locally finite graphs, we have $\FF^p=C(X)$ for all $p\in [1,\infty)$.

Moreover, the local summability condition implies that $f\in \FF^p(V)$ if and only if, for every $x\in V$,
\[
\sum_{y\in X} b(x,y)|f(y)|^{p-1}<\infty.
\]
As a consequence, for all $p\geq 1$,
\[
\ell^\infty(X) \sse \FF^p(X).
\]

For all $p\geq 1$ and $a\in \RR$, we set
\[ \p{a}:= |a|^{p-1} \sgn (a)=|a|^{p-2} a, \]
with the usual {convention} that $\infty\cdot 0 =0$. Here, $\sgn\colon \RR\to \set{-1,0,1}$ is the sign function, that is $\sgn(\alpha)=1$ for all $\alpha> 0$, $\sgn(\alpha)=-1$ for all $\alpha< 0$, and $\sgn(0)=0$.

Let $p\in [1,\infty)$, then the \emph{($p$-)Laplacian}  $\Delta_{p}=\Delta_{p,b, m} \colon \FF^p(V)\to C(V)$ is  defined via
\[ \Delta_pf(x):=\frac{1}{m(x)}\, \sum_{y\in X} b(x,y)\p{\nabla_{x,y}f}, \qquad x\in V.\]

A function $u\in \FF^p(V)$ is said to be \emph{($p$-)harmonic (respectively ($p$-)superharmonic, strictly ($p$-)superharmonic, ($p$-)subharmonic}) on $V\sse X$ with respect to $\Delta_p$ if \[\Delta_pu=0 \quad(\text{respectively }\Delta_pu\ge 0,\,\Delta_pu \gneq 0, \, \Delta_pu\leq 0)\qquad\text{ on }V.\]
If $V=X$ we only speak of super-/sub-/harmonic functions.

\subsection{Energy Functionals Associated with Graphs}
For $p\in [1,\infty)$ and $V\sse X$, let the \emph{space of finite energy functions} $\DD^{p}(V)=\DD^p_{b,c}(V)$ be given by
\begin{align*}
\DD^{p}(V)&:=\bigl\{f\in C(X): \norm{\nabla f}^p_{p,b,V}:=\sum_{x,y\in V} b(x,y)\abs{\nabla_{x,y}f}^p<\infty \bigr\},
\end{align*}
and let  $\DD^p:=\DD^p(X)$ and $\norm{\nabla \cdot}_{p,b}=\norm{\nabla\cdot}_{p,b,X}$.

We define the \emph{(p-)energy functional} $\E_p=\E_{p,b,c}\colon \DD^{p}\to [0,\infty)$ via
\begin{align*}
	\E_p(f):=\frac{1}{2}\sum_{x,y\in X} b(x,y)\abs{\nabla_{x,y}f}^p=\frac{1}{2}\norm{\nabla f }_{p,b}^p.
\end{align*}

For $p=2$ such a functional is a quadratic form.

Note that $\E_p$ is \emph{Markovian} for all $p\in[1,\infty)$, i.e., compatible with normal contractions. This means that $\E_p(C\circ f)\leq \E_p(f)$ for all $f\in \DD^{p}$ and all $C\colon \RR\to \RR$ with $C(0)=0$ and $\abs{C(s)-C(t)}\leq \abs{s-t}$ for all $s,t\in \RR$. Furthermore, we have the following fundamental property:

\begin{lemma}\label{lem:BeurlingDeny}
	Let $p\in [1,\infty)$, then $\E_p$ satisfies the second Beurling-Deny criterion, that is, for all $f,g \in \DD^p$ and all normal contractions $C\colon \RR \to \RR$, we have
	\[\E_p(f+C\circ g)+\E_p(f-C\circ g)\leq \E_p(f+g)+\E_p(f-g).  \]
	In particular,
	\[ \E_p(f\wedge g)+\E_p(f\vee g)\leq \E_p(f)+\E_p(g).\]
\end{lemma}
\begin{proof}
By the $p$-triangle inequality and the assumption $f,g\in \DD^p$ all of the sums above converge. The statement follows if we can show the associated pointwise inequality and then sum over all $x,y\in X$. Hence, we need to show
\[\abs{\nabla_{x,y}f + \nabla_{x,y}C\circ g}^p+\abs{\nabla_{x,y}f - \nabla_{x,y}C\circ g}^p \leq \abs{\nabla_{x,y}f + \nabla_{x,y}g}^p+ \abs{\nabla_{x,y}f - \nabla_{x,y}g}^p  \]
for all $x,y\in X$. Let $\alpha:=\nabla_{x,y}f$, $s:=\nabla_{x,y} C\circ g$ and $t:=\nabla_{x,y}g$. Since $C$ is a normal contraction, $\abs{s}\leq \abs{t}$. Hence, the inequality rewrites to
\[ \abs{\alpha + s}^p+\abs{\alpha - s}^p \leq \abs{\alpha + t}^p+ \abs{\alpha - t}^p. \]
The function $F_{\alpha}(s):=\abs{\alpha + s}^p+\abs{\alpha - s}^p$ is even and convex for all $\alpha\in \RR$ and $p\geq 1$. Thus, $F_{\alpha}(s)\leq F_{\alpha}(t)$ for all $\abs{s}\leq \abs{t}$, which is the desired inequality.

The second assertion follows by taking $\tilde{f}:= (f+g)/2$, $\tilde{g}:=(f-g)/2$ and $C=\abs{\cdot}$.
\end{proof}
For more information on nonlinear energy functionals satisfying the second Beurling-Deny criterion see \cite{BDS, CG, Claus, Puchert, SZ25}.

Note that an application of H\"older's inequality shows that
\[
\DD^{p}(V)\sse \FF^p(V), \quad  p\in [1,\infty).
\]
The connection between $\Delta_p$ and $\E_p$ on $C_c(X)$ for $p\in[1,\infty)$ is further described via a Green's formula, also known as integration by parts, which is stated next.

Let $V\sse X$. To shorten notation, we define a weighted bracket $\ip{\cdot}{\cdot}_{V}$ via
\[\ip{f}{\phi}_{V}:=\sum_{x\in V}f(x)\phi(x)m(x)\]
whenever the sum converges absolutely for $f, \phi \in C(X)$. 
Moreover,  for any $u,v\in C(V)$ and $p\in [1,\infty)$ we set
\[
\E_{p,{V}}(u,v):=\frac{1}{2} \sum_{x,y \in {V}}b(x,y)\p{\nabla_{x,y}u}\nabla_{x,y}v,
\]
whenever the sums converge absolutely; and if $V=X$, we set $\E_{p}=\E_{p,{V}}$.

\begin{lemma} \label{lem:GreensFormula} 
	Let $p\in [1,\infty)$ and $V\sse X$. Let $f\in \FF^p(V)$ and $\phi\in C_c(X)$. Then all of the following sums converge absolutely and
	\begin{align*}
		\ip{\Delta_pf}{\phi}_{V}=\E_{p,V}(f,\phi)+\sum_{x\in V, y\in { \partial_e V}}b(x,y)\p{\nabla_{x,y}f}\phi(x).
	\end{align*}
	In particular, the formula can be applied to $f\in C_c(X)$, or $f\in \DD^{p}$, and
	\[\E_p(\phi)=\ip{\Delta_p\phi}{\phi}_{V}, \qquad \phi\in C_c(V).\]
\end{lemma}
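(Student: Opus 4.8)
The plan is to establish the identity by a direct computation: expand $\ip{L_pf}{\phi}_V$ from the definitions of $L_p$ and $\Delta_p$, cancel the weight $m(x)$, and then reorganize the edge part of the resulting double sum by means of the symmetry of $b$ together with the oddness of $a\mapsto\p{a}$. The first task, on which everything else rests, is to settle absolute convergence, as this is what legitimizes every rearrangement below. Writing $S:=\supp\phi$ (a finite set), in
\[\ip{L_pf}{\phi}_V=\sum_{x\in V}\Bigl(\sum_{y\in X}b(x,y)\p{\nabla_{x,y}f}\Bigr)\phi(x)+\sum_{x\in V}c(x)\p{f(x)}\phi(x)\]
only the finitely many $x\in S\cap V$ contribute; for each such $x$ the inner sum is absolutely convergent precisely because $f\in\FF^p(V)$ and $x\in V$, while the potential term is a finite sum of finite quantities. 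Hence all the sums in sight converge absolutely.

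For the main computation I would split, for each fixed $x\in V$, the sum over $y\in X$ as $\sum_{y\in V}+\sum_{y\in\partial_e V}+\sum_{y\in X\setminus\overline{V}}$. The last sum vanishes because $b(x,y)=0$ whenever $x\in V$ and $y\notin\overline{V}$, by the definition of $\partial_e V$; the middle sum is exactly the claimed boundary term $\sum_{x\in V,\,y\in\partial_e V}b(x,y)\p{\nabla_{x,y}f}\phi(x)$. For the $V\times V$ part I relabel $x\leftrightarrow y$ and use that $b$ is symmetric and $\p{\nabla_{y,x}f}=-\p{\nabla_{x,y}f}$, which gives
\[\sum_{x,y\in V}b(x,y)\p{\nabla_{x,y}f}\phi(x)=\tfrac12\sum_{x,y\in V}b(x,y)\p{\nabla_{x,y}f}\nabla_{x,y}\phi,\]
the edge part of $\E_{p,V}(f,\phi)$; the relabeling is justified by the same convergence argument applied with $\phi(y)$ in place of $\phi(x)$, again using $f\in\FF^p(V)$ and the symmetry of $b$. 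Combining this with the surviving potential term $\sum_{x\in V}c(x)\p{f(x)}\phi(x)$, which is the remaining part of $\E_{p,V}(f,\phi)$, yields the stated formula.

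For the ``in particular'' claims, $C_c(X)\sse\FF^p(V)$ holds trivially (finite support) and $\DD^p\sse\FF^p$ was already noted via H\"older's inequality, so the formula applies to both classes. For the final identity take $f=\phi\in C_c(V)$: since $\supp\phi\sse V$ the terms with $x\notin V$ vanish, so $\ip{L_p\phi}{\phi}_V=\ip{L_p\phi}{\phi}_X$, and applying the formula with $V=X$ (whence $\partial_e X=\emptyset$ and the boundary term drops) gives $\ip{L_p\phi}{\phi}_X=\E_{p,X}(\phi,\phi)=\E_p(\phi)$, using $\p{a}\,a=\abs{a}^p$. The only genuine obstacle in the whole argument is the bookkeeping of absolute convergence that underlies the splitting of $\sum_{y\in X}$ and the symmetrization step; once that is in place the remaining manipulations are purely algebraic.
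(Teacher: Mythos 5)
Your proof is correct and is exactly the direct computation the paper alludes to (it gives no proof of its own, deferring to \cite[Lemma~2.3]{F:GSR} with the remark that the result ``follows from direct computations''). The splitting of $\sum_{y\in X}$ into $V$, $\partial_e V$ and $X\setminus\overline{V}$, the symmetrization via $b(x,y)=b(y,x)$ and the oddness of $a\mapsto\p{a}$, and the careful justification of absolute convergence from $f\in\FF^p(V)$ and the finiteness of $\supp\phi$ are precisely what is needed.
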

The  proof follows from  direct computations, see, e.g.,  \cite[Lemma~2.3]{F:GSR}, where a version valid for arbitrary $p$-Schrödinger operators is described.
\begin{remark}
	In the following, we will always assume that 
	\[ p\in (1,\infty), \]
	even though some (but not all) of the results are also valid for  $p=1$. {This will be pointed out, whenever it is the case.} 
	
	Moreover, some of the statements also remain valid if one adds a non-negative potential to the $p$-Laplacian, i.e., considers an operator $L_p$ instead of $\Delta_p$ which acts via $L_pf(x)=\Delta_pf(x)+ V(x)\p{f(x)}$ for $f\in\FF^p$, $x\in X$ and some $0\leq V\in C(X)$. In some instances it is enough to assume that the associated $p$-energy functional is non-negative on $C_c(X)$. {We will also comment on these issues when appropriate.}
\end{remark}

\begin{example}
	Our setting includes the \emph{fractional $p$-Laplacian} $\Delta_p^{\sigma}$, $\sigma\in (0,2/p)$, over a countable infinite set $X$, see e.~g. \cite{DKP26}. The operator $\Delta_p^{\sigma}$ is defined via 
	\[ \Delta_p^\sigma f(x):= C\cdot\int_0^\infty e^{-t\Delta_2}(\p{\nabla_{x \cdot}f})(x) \frac{\dd t}{t^{1+\frac{p\sigma}{2}}} \]
	for $f\in C_c(X)$.	Here, $P_t=e^{-t\Delta_2}, t\geq 0$, is the semigroup associated with the Dirichlet Laplacian to $\Delta_2=\Delta_{2,b,m}$ and $C=C(p,\sigma)>0$ is a constant (a quotient of certain Gamma functions). For $p=2$, it is the classical fractional Laplacian, see e.~g. \cite{CR18,CRSRTV,KN23}. Let $p_t(x,y)= e^{-\Delta_2}1_y(x)$, $x,y\in X$, be the heat kernel to $\Delta_2$, and set  
	
	\[b_{p,\sigma}(x,y):= \int_0^\infty p_t(x,y)\frac{\dd t}{t^{1+\frac{p\sigma}{2}}},\]
	whenever $x\neq y$ as well as $b_{p,\sigma}(x,x)=0$.
	
	For all $f\in C_c(X)$, we have
	\begin{align*}
		\Delta_p^\sigma f(x)= C\int_0^\infty \sum_{y\in X} p_t(x,y) \p{\nabla_{x,y}f}\frac{\dd t}{t^{1+\frac{p\sigma}{2}}}=C\sum_{y\in X}b_{p,\sigma}(x,y) \p{\nabla_{x,y}f}.
	\end{align*} We extend $\Delta_p^\sigma$ to functions in $\FF^p_{b_{p,\sigma}}$ via the graph $p$-Laplacian description. Note that $(X,b_{p,\sigma},C^{-1})$ is a non-locally finite complete graph which is locally summable.
	
	Other examples can be found in Section~\ref{s:Appl}.
\end{example}

\begin{remark}
	It should be mentioned that another useful generalization of the Laplacian arises via a Green's formula	from the $p$-energy
	$$
	\sum_{x\in X} \frac 1{2}\left(\sum_{y\in X}b(x,y)(f(x)-f(y))^2\right)^{p/2},
	$$
	where the inner summation represents the square of the length of the ``gradient''  of $f $ at $x$, see, e.g. \cite{GLY16}. However, this version of the $p$-Laplacian will not be considered here.
\end{remark}

We continue with some basic observations about the space of finite energy functions. We define the space $D_{0}^{p}:=D^p_{0,b,c}(X)$ to be the closure of $C_c(X)$ with respect to the energy norm $\norm{\cdot}_{o,p}$ given by 
\begin{align*}
	\norm{f}_{o,p}:=(\E_p(f)+\abs{f(o)}^p)^{1/p},
\end{align*}
where $o$ is some fixed vertex. If $p=2$, this energy norm is called form norm. The first statement of the following lemma can be found e.g. in \cite[Lemma~2.7]{Mu13}.  {We provide a proof for the convenience of the reader}.

\begin{lemma}\label{lem:uniform}
For $p\in (1,\infty)$, and $o\in X$, the spaces $(D^{p}, \norm{\cdot }_{o,p})$ and $(D_{0}^{p}, \norm{\cdot }_{o,p})$ are uniformly convex and therefore reflexive. Moreover, $\norm{\cdot}_{o,p}$ and $\norm{\cdot}_{o',p}$ are equivalent norms on $D^{p}$ for all $o, o'\in X$. In particular, for all $x\in X$ the point evaluation $\operatorname{ev}_x\colon D^p\to \RR, f\mapsto f(x)$ is a continuous linear functional on $(D^{p}, \norm{\cdot }_{o,p})$.
\end{lemma}
\begin{proof}
	It is not difficult to see that $\norm{\cdot}_{o,p}$ is a norm on $D^p$. It follows from Fatou's lemma in the same fashion as in the linear case in \cite[Proposition~1.3]{KLW21} that $\E_p$ is lower semi-continuous, and thus, that $(D^{p}, \norm{\cdot }_{o,p})$ is complete. Since $(D_{0}^{p}, \norm{\cdot }_{o,p})$ is a closed subspace of a Banach space, it is also complete. Moreover, since both  $(D^{p}, \norm{\cdot }_{o,p})$ and $(D_{0}^{p}, \norm{\cdot }_{o,p})$ are (isometric to) closed subspaces of $L^p(\set{o}\times E, 1\times b)$, they are uniformly convex.

	It remains to show that the norms $\norm{\cdot}_{o,p}$ and $\norm{\cdot}_{o',p}$ are equivalent on $D^{p}$ for all $o, o'\in X$. Let us consider a path $o=x_0\sim \ldots x_n=o'$ from $o$ to $o'$, and let $f\in D^{p}$. By using Hölder's inequality, we get 
	
	\begin{align*}
	\abs{\nabla_{o,o'}f}&\leq \sum_{i=0}^{n-1}\abs{\nabla_{x_i,x_{i+1}}f}
	\leq \left(  \sum_{i=0}^{n-1} b^{-\frac{1}{p-1}}(x_i,x_{i+1}) \right)^{\frac{p-1}{p}}\cdot \left( \sum_{i=0}^{n-1}b(x_i,x_{i+1})\abs{\nabla_{x_i,x_{i+1}}f}^p\right)^{\frac{1}{p}}\\
	&\leq \left(  \sum_{i=0}^{n-1} b^{-\frac{1}{p-1}}(x_i,x_{i+1}) \right)^{\frac{p-1}{p}}\cdot \E_p^{1/p}(f).
	\end{align*}
	Define  $C_p(o,o')$ as the infimum of $ \left(\sum_{i=0}^{n-1} b^{-\frac{1}{p-1}}(x_i,x_{i+1})\right)^{p-1} $ over all paths from $o$ to $o'$. Then, we conclude
\begin{equation}
\label{nablaestimate}
\abs{\nabla_{o,o'}f}^p\leq C_p(o,o')\E_p(f).
\end{equation}
{Thus}, using the $p$-triangle inequality $\abs{\alpha + \beta}^p\leq 2^{ p-1}\left(\abs{\alpha}^p+\abs{\beta}^p \right)$ for all $\alpha, \beta \in \RR$, we have
	\begin{align*}
	\norm{f}_{o,p}^p
	&\leq \E_p(f)+2^{ p-1}(\abs{\nabla_{o,o'}f}^p+\abs{f(o')}^p)
	\leq (1+2^{ p-1}C_p(o,o'))\E_p(f)+2^{ p-1}\abs{f(o')}^p\\
	&\leq C'\norm{f}_{o',p}^p,
	\end{align*}
	 where $C'=(1+2^{p-1}C_p(o,o'))\vee 2^{p-1}$. By a symmetry argument, we get the equivalence.

The last statement can be seen as follows: Clearly, $\operatorname{ev}_x$ is linear. Since $\abs{\operatorname{ev}_x(f)}^p=\abs{f(x)}^p\leq \norm{f}_{x,p}^p\leq C_p \norm{f}_{o,p}^p$ for some positive constant $C_p$, the statement follows.
\end{proof}
Note that the proof goes through by adding a non-negative potential to $\E_p$. By making a slight modification, the second statement of Lemma~\ref{lem:uniform} is also true for $p=1$. Note that some of the main results will explicitly use reflexivity, and thus exclude the case $p=1$.

The next lemma is a non-linear counterpart of \cite[Lemma~6.5]{KLW21} and is a consequence of the uniform convexity of the Banach space $(D^{p},\norm{\cdot}_{o,p})$ for all $p\in (1,\infty)$.
\begin{lemma}\label{lem:limsup}
	Let $o\in X$. Let $f\in D^{p}$ and let $(f_n)$ be a sequence in $D^{p}$. Then,  $f_n\to f$ pointwise and $\limsup_{n\to \infty}\E_p(f_n)\leq \E_p(f)$ if and only if $f_n\to f$ with respect to $\norm{\cdot}_{o,p}$.
\end{lemma}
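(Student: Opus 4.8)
The plan is to combine the lower semicontinuity of $\E_p$ under pointwise convergence with the uniform convexity of $(D^{p},\norm{\cdot}_{o,p})$ provided by Lemma~\ref{lem:uniform}, following the scheme of \cite[Lemma~6.5]{KLW21}. The one-sided hypothesis $\limsup_n\E_p(f_n)\le\E_p(f)$ will first be upgraded to genuine convergence of energies, and then a midpoint argument will feed into uniform convexity.

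First I would record the lower semicontinuity: since $f_n\to f$ pointwise, each summand $b(x,y)\abs{\nabla_{x,y}f_n}^p$ converges to $b(x,y)\abs{\nabla_{x,y}f}^p$ and $c(x)\abs{f_n(x)}^p\to c(x)\abs{f(x)}^p$, so Fatou's lemma yields $\E_p(f)\le\liminf_n\E_p(f_n)$. Together with the hypothesis $\limsup_n\E_p(f_n)\le\E_p(f)$ this forces $\lim_n\E_p(f_n)=\E_p(f)$. Because $f_n(o)\to f(o)$ as well, it follows that $\norm{f_n}_{o,p}\to\norm{f}_{o,p}$.

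If $\norm{f}_{o,p}=0$, then $\E_p(f)=0$ and $f(o)=0$; on the connected graph $\E_p(f)=0$ forces $f$ to be constant, whence $f\equiv 0$, and so $\norm{f_n-f}_{o,p}=\norm{f_n}_{o,p}\to 0$, finishing this case. Assume henceforth $\norm{f}_{o,p}>0$. The key second step is to apply the lower semicontinuity argument to the midpoint sequence $g_n:=(f_n+f)/2$, which also converges pointwise to $f$; this gives $\liminf_n\E_p(g_n)\ge\E_p(f)$ and $g_n(o)\to f(o)$, hence $\liminf_n\norm{g_n}_{o,p}\ge\norm{f}_{o,p}$. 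On the other hand the triangle inequality together with the first step gives $\limsup_n\norm{g_n}_{o,p}\le\tfrac12\bigl(\lim_n\norm{f_n}_{o,p}+\norm{f}_{o,p}\bigr)=\norm{f}_{o,p}$, so in fact $\norm{g_n}_{o,p}\to\norm{f}_{o,p}$.

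Finally I would invoke uniform convexity. Normalizing by $r:=\norm{f}_{o,p}>0$, set $u_n:=f_n/r$ and $v:=f/r$; then $\norm{u_n}_{o,p}\to 1$, $\norm{v}_{o,p}=1$, and $\norm{(u_n+v)/2}_{o,p}=\norm{g_n}_{o,p}/r\to 1$. The standard consequence of uniform convexity — that $\norm{u_n}\to 1$, $\norm{v_n}\to 1$ and $\norm{(u_n+v_n)/2}\to 1$ force $\norm{u_n-v_n}\to 0$ — then yields $\norm{u_n-v}_{o,p}\to 0$, i.e.\ $\norm{f_n-f}_{o,p}\to 0$. I expect the main subtlety to lie in the passage to the midpoint sequence and in the bookkeeping of the uniform convexity step (in particular, reducing to genuine unit vectors when the norms only tend to, rather than equal, $1$, by normalizing $u_n/\norm{u_n}$ and controlling the resulting error terms); the lower semicontinuity via Fatou is routine but must be stated explicitly, since it is exactly what converts the one-sided $\limsup$ hypothesis into the two-sided convergence $\E_p(f_n)\to\E_p(f)$ that drives the whole argument.
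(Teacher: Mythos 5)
Your proof is correct, but it follows a genuinely different route from the paper's. The paper argues via weak compactness: the hypothesis makes $(f_n)$ bounded in the reflexive space $(\DD^{p},\norm{\cdot}_{o,p})$, so every subsequence has a weakly convergent subsubsequence whose limit must be $f$ (since point evaluations are continuous functionals, by the path estimate of Lemma~\ref{lem:uniform}); thus $f_n\rightharpoonup f$, and the Radon--Riesz property of uniformly convex spaces (\cite[Proposition~3.32]{Brezis11}, which needs exactly $\limsup_n\norm{f_n}_{o,p}\le\norm{f}_{o,p}$) upgrades this to norm convergence. You instead bypass the weak topology entirely: Fatou's lemma gives lower semicontinuity of $\E_p$ under pointwise convergence, which converts the one-sided hypothesis into $\norm{f_n}_{o,p}\to\norm{f}_{o,p}$; applying the same lower semicontinuity to the midpoints $g_n=(f_n+f)/2$ and combining with the triangle inequality gives $\norm{g_n}_{o,p}\to\norm{f}_{o,p}$, and the definition of uniform convexity (after the normalization you correctly flag, and after disposing of the degenerate case $\norm{f}_{o,p}=0$ via connectedness) then forces $\norm{f_n-f}_{o,p}\to 0$. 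In effect you reprove the relevant instance of the Radon--Riesz theorem by hand, with pointwise convergence playing the role of weak convergence. Your version is more elementary and self-contained (no reflexivity, no Eberlein--\v{S}mulian-type extraction, no external citation beyond the modulus of convexity), at the cost of being longer; the paper's version is shorter because it outsources both the compactness and the Radon--Riesz step to \cite{Brezis11}. Both arguments rest on the same two pillars established in Lemma~\ref{lem:uniform}, namely uniform convexity of $(\DD^{p},\norm{\cdot}_{o,p})$ and the compatibility of that norm with pointwise evaluation.
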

\begin{proof}
$\implies$: The assumptions $f_n\to f$ pointwise and $\limsup_{n\to \infty}\E_p(f_n)\leq \E_p(f)$ imply
\[ \limsup_{n\to \infty}\norm{f_n}_{o,p}^p= \limsup_{n\to \infty}(\E_p(f_n)+\abs{f_n(o)}^p)\leq \E_p(f)+\abs{f(o)}^p=\norm{f}^p_{o,p}, \]
i.e., $(f_n)$ is bounded in the reflexive Banach space $(D^{p},\norm{\cdot}_{o,p})$. By \cite[Theorem~3.18]{Brezis11}, every ball in a reflexive Banach space is weakly sequentially compact. Since $(f_n)$ converges pointwise to $f$, every subsequence of $(f_n)$ has a subsubsequence which converges weakly to $f$. Therefore, $(f_n)$ converges weakly to $f$. By \cite[Proposition~3.32]{Brezis11}, the uniform convexity together with weak convergence and $\limsup_{n\to \infty}\norm{f_n}_{o,p}\leq \norm{f}^p_{o,p}$ implies that
 $(f_n)$ converges strongly to $f$, i.e., $\norm{f-f_n}_{o,p}\to 0$.

$\Longleftarrow$: By Lemma~\ref{lem:uniform}, the point evaluation $\delta_x$ is continuous for all $x\in X$. Hence, since $f_n\to f$ with respect to the (continuous) norm $\norm{\cdot}_{o,p}$, we also have $f_n\to f$ pointwise. Hence, $\E_p(f_n)= {\norm{f_n}_{o,p}^p- \abs{f_n(o)}^p\to \norm{f}_{o,p}^p- \abs{f(o)}^p}=\E_p(f)$, and the statement follows.
\end{proof}

For $p\in (1,\infty)$, we define the \emph{(variational) $p$-capacity} with respect to $V\sse X$ and $K \sse V$ finite by the formula
\[
\cc_p(K,V):=\inf\set{\E_p(\phi) :	\phi\in C_c(V), \, \phi \geq 1 \text{ on } K}.
\]
If $K=\set{o}$ is a singleton, $o\in X$, then we shorten notation and set $\cc_p(o,V):=\cc_p(\set{o},V)$. In the case where $V=X$ we will simply write $\cc_p(K)$ instead of $\cc_p(K,X).$
Note that the infimum in the definition of  capacity can be taken in the closure of $C_c(V)$ with respect to the norm
$\norm{\cdot}_{o,p}$.  Moreover, since $\E_p$ is Markovian, one can also restrict to functions satisfying $0\leq \phi\leq 1$ which are $1$ on $K$.

In particular, if $V=X$ we then have
\[
\cc_p(K)=\inf\set{\E_p(v) : v\in D_{0}^{p}, \,  0\leq v\leq1, \,	v= 1 \text{ on } K}.
\]

In the following lemma we collect some well-known properties of the $p$-capacity, and they are actually also valid for $p=1$.

\begin{lemma} Let $V_1,V_2 \sse X$ be arbitrary and $K_1, K_2\sse X$ finite and non-empty. Then the following properties hold:
\begin{enumerate}[label=(\alph*)]
\item\label{cap2}   the $p$-capacity is monotone, i.e.,
\begin{enumerate}[label=(\alph{enumi}\,\alph*)]
	\item\label{cap21} 	if $K_1\sse V_1\sse V_2$ then $\cc_p(K_1,V_1)\geq \cc_p(K_1,V_2)$,
	\item\label{cap22} if $K_1\sse K_2\sse V_1$ then $\cc_p(K_1,V_1)\leq \cc_p(K_2,V_1)$,
\end{enumerate}
\item\label{cap3} the $p$-capacity is strongly subadditive, i.e., if $K_1\cup K_2\sse V_1$ then
\[\cc_p(K_1\cup K_2,V_1)+\cc_p(K_1\cap K_2,V_1)\leq  \cc_p(K_1,V_1)+\cc_p(K_2,V_1);\]
\item\label{cap6} the $p$-capacity is determined on the boundary, more precisely, for $K_1\cup \partial_e K_1\sse V_1$ finite, we have
\[ \cc_p(K_1\cup \partial_e K_1, V_1)= \cc_p(\partial_e K_1, V_1).\]
\end{enumerate}
\end{lemma}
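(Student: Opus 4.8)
The plan is to read each of the three properties off the corresponding relation between the classes of competing functions, the only genuine computation being the submodularity estimate that drives \ref{cap3}. Throughout I write $\mathcal{A}(K,V)=\set{\phi\in C_c(V):\phi\ge 1 \text{ on } K}$, so that $\cc_p(K,V)=\inf_{\phi\in\mathcal{A}(K,V)}\E_p(\phi)$, and I freely use that, by the Markovian property, the infimum may be restricted to competitors with $0\le\phi\le 1$.

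For the monotonicity \ref{cap2} I would argue purely at the level of admissible functions. For \ref{cap21}, if $K_1\sse V_1\sse V_2$ then $C_c(V_1)\sse C_c(V_2)$, so every $\phi\in\mathcal{A}(K_1,V_1)$ also lies in $\mathcal{A}(K_1,V_2)$; taking the infimum over the larger class can only decrease it, giving $\cc_p(K_1,V_1)\ge\cc_p(K_1,V_2)$. For \ref{cap22}, if $K_1\sse K_2\sse V_1$ then $\phi\ge 1$ on $K_2$ forces $\phi\ge 1$ on $K_1$, so $\mathcal{A}(K_2,V_1)\sse\mathcal{A}(K_1,V_1)$ and the infimum over the smaller class is larger, i.e. $\cc_p(K_1,V_1)\le\cc_p(K_2,V_1)$.

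For the subadditivity \ref{cap3}, given $\phi_1\in\mathcal{A}(K_1,V_1)$ and $\phi_2\in\mathcal{A}(K_2,V_1)$ with $0\le\phi_i\le 1$, the function $\phi_1\vee\phi_2$ again lies in $C_c(V_1)$, takes values in $[0,1]$ and equals $1$ on $K_1\cup K_2$, hence competes for $\cc_p(K_1\cup K_2,V_1)$. I expect the main obstacle to be the submodularity estimate
\[
\E_p(\phi_1\vee\phi_2)+\E_p(\phi_1\wedge\phi_2)\le \E_p(\phi_1)+\E_p(\phi_2),
\]
which, since $\E_p(\phi_1\wedge\phi_2)\ge 0$, yields $\E_p(\phi_1\vee\phi_2)\le\E_p(\phi_1)+\E_p(\phi_2)$ and then \ref{cap3} upon taking infima. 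I would reduce it to a pointwise inequality. The potential part is free: since $\set{a\vee c,\,a\wedge c}=\set{a,c}$ we have $\abs{a\vee c}^p+\abs{a\wedge c}^p=\abs a^p+\abs c^p$. On an edge $(x,y)$, writing $a=\phi_1(x),b=\phi_1(y),c=\phi_2(x),d=\phi_2(y)$, the claim is
\[
\abs{a\vee c-b\vee d}^p+\abs{a\wedge c-b\wedge d}^p\le\abs{a-b}^p+\abs{c-d}^p,
\]
which I would prove from convexity of $t\mapsto\abs t^p$. By the symmetry $(a,b)\leftrightarrow(c,d)$ assume $a\ge c$; if also $b\ge d$ both sides coincide, and if $b<d$ the left-hand side is $\abs{a-d}^p+\abs{c-b}^p$, where $(a-d)+(c-b)=(a-b)+(c-d)$ and $c-d\le a-d,c-b\le a-b$, so the inner differences interlace the outer ones with equal sum and convexity gives the inequality. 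Summing over ordered pairs with the factor $1/2$ and over vertices yields the estimate.

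For \ref{cap6}, the bound $\cc_p(\partial_e K_1,V_1)\le\cc_p(K_1\cup\partial_e K_1,V_1)$ is immediate from \ref{cap22} applied to $\partial_e K_1\sse K_1\cup\partial_e K_1$. For the reverse inequality I would take $\phi\in\mathcal{A}(\partial_e K_1,V_1)$ with $0\le\phi\le 1$ and raise it to $1$ on $K_1$: set $\tilde\phi:=1$ on $K_1$ and $\tilde\phi:=\phi$ on $X\setminus K_1$, so that $\tilde\phi\in C_c(V_1)$, $0\le\tilde\phi\le 1$ and $\tilde\phi=1$ on $K_1\cup\partial_e K_1$, i.e. $\tilde\phi$ competes for $\cc_p(K_1\cup\partial_e K_1,V_1)$. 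The decisive structural point is that, by the very definition of $\partial_e K_1$, every neighbour of a vertex of $K_1$ lies in $K_1\cup\partial_e K_1$; hence every edge incident to $K_1$ joins two vertices on which $\tilde\phi\equiv 1$ and contributes nothing to the edge energy, while on all remaining edges $\tilde\phi$ and $\phi$ agree, so the edge energy does not increase. The one point requiring care is the potential term $\sum_{x\in K_1}c(x)\abs{\tilde\phi(x)}^p$, which is not increased exactly when $c$ vanishes on $K_1$ (in particular when $c\equiv 0$, the setting of interest for parabolicity); under this proviso $\E_p(\tilde\phi)\le\E_p(\phi)$, so $\cc_p(K_1\cup\partial_e K_1,V_1)\le\E_p(\tilde\phi)\le\E_p(\phi)$, and passing to the infimum over $\phi$ gives \ref{cap6}.
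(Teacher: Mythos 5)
Your argument is correct, and for the monotonicity and the boundary statement it follows essentially the paper's own route: part (a) is read off from the inclusions of the competitor classes, and part (c) uses exactly the same modification $\psi$ equal to $1$ on $K_1$ and to $\phi$ off $K_1$. Two points deserve comment. For the subadditivity (b), the paper only proves the pointwise bound $\abs{\nabla_{x,y}(\phi_1\vee\phi_2)}^p\le\abs{\nabla_{x,y}\phi_1}^p+\abs{\nabla_{x,y}\phi_2}^p$ (by noting that $\nabla_{x,y}(\phi_1\vee\phi_2)$ lies between $\nabla_{x,y}\phi_1$ and $\nabla_{x,y}\phi_2$), whereas you prove the genuinely stronger submodularity $\E_p(\phi_1\vee\phi_2)+\E_p(\phi_1\wedge\phi_2)\le\E_p(\phi_1)+\E_p(\phi_2)$; your edgewise majorization--convexity computation is correct, and since $\phi_1\wedge\phi_2$ is admissible for $K_1\cap K_2$ (the case $K_1\cap K_2=\emptyset$ being trivial), it immediately yields the strong subadditivity $\cc_p(K_1\cup K_2,V_1)+\cc_p(K_1\cap K_2,V_1)\le\cc_p(K_1,V_1)+\cc_p(K_2,V_1)$, which the paper explicitly raises as an open question in the remark following the lemma --- so your route buys strictly more. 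Second, your caveat in (c) about the potential term is well taken rather than a defect: since $\psi=1\ge\phi$ on $K_1$, one has $c(x)\abs{\psi(x)}^p\ge c(x)\abs{\phi(x)}^p$ there, so $\E_p(\psi)\le\E_p(\phi)$ genuinely requires $c$ to vanish on $K_1$; the paper's justification ``since $c\ge 0$ on $K_1$'' does not suffice, and the identity in (c) actually fails for $c\gneq 0$ on $K_1$ (e.g.\ on the half-line with a large potential at the origin, where $\cc_p(\set{x_0,x_1})\ge c(x_0)$ can exceed $\cc_p(\set{x_1})$). Your proviso thus pinpoints a hypothesis the paper's version glosses over; in the applications to parabolicity one has $c\equiv 0$, so nothing is lost.
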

\begin{proof} The proof of \ref{cap2} is clear by the monotonicity of the infimum.
	
For \ref{cap3}, let $\epsilon >0$. Then there are $0\leq \phi_i\in C_c(V_1)$ such that $\phi_i= 1$ on $K_i$ and $\E_p(\phi_i)\leq \cc_p(K_i,V_1)+\epsilon$, $i=1,2$. By Lemma~\ref{lem:BeurlingDeny}, we have 
\[\E_p(\phi_1\wedge \phi_2)+\E_p(\phi_1\vee \phi_2)\leq \E_p(\phi_1)+\E_p(\phi_2).\]	
Hence, 
\begin{multline*}
	\cc_p(K_1\cup K_2,V_1)+\cc_p(K_1\cap K_2,V_1)
	 \leq \E_p(\phi_1\vee \phi_2) +\E_p(\phi_1\wedge \phi_2)\\
	 \leq \E_p(\phi_1)+\E_p(\phi_2)
	 \leq \cc_p(K_1,V_1)+\cc_p(K_2, V_1)+2\epsilon.
\end{multline*}
Taking $\epsilon \to 0$ finishes the proof.

Finally, by \ref{cap22}, ``$\geq$'' holds in \ref{cap6}. To prove the reverse inequality, given $\epsilon > 0$, we can find $\phi\in C_c(V_1)$ such that $0\leq \phi\leq 1$, $\phi=1$ on $\partial_e K_1$ and $\E_p(\phi)< \cc_p(\partial_e K_1, V_1)+\epsilon$. Let $\psi\in C_c(V_1)$ be such that $\psi= \phi$ on $X\setminus K_1$ and $\psi=1$ on $K_1$. In particular, $\psi = 1$ on $K_1\cup\partial_e K_1$ and it is easily checked that  $\abs{\nabla_{x,y}\psi}^p\leq \abs{\nabla_{x,y}\phi}^p$ for all $x\sim y\in X$. Hence, we get
\[ \cc_p(K_1\cup \partial_e K_1, V_1)\leq \E_p(\psi)\leq \E_p(\phi)< \cc_p(\partial_e K_1, V_1)+ \epsilon.\]
Letting $\epsilon \to 0$ yields the result.
\end{proof}
We remark that the first two statements in the lemma  also hold for non-negative energy functionals whereas the third statement uses the non-negativity of the potential. Also, essentially the same argument used to prove \ref{cap6} shows that, for every finite set $K\sse V$, $\cc_p(K, V)=\cc_p(\partial_i K, V). $

\begin{definition}\label{def_p-parabolicity}
A graph $G$ is said to be \emph{$p$-parabolic} if $\cc_{p}(K)=0$ for every 
finite subset $K$ of $X$. Otherwise, the graph is called \emph{$p$-hyperbolic}.
\end{definition}
\begin{remark}
	The word ``parabolic'' in Definition~\ref{def_p-parabolicity} has its origin in the geometry of surfaces. By the celebrated uniformisation theorem of Klein, Koebe and Poincaré in the linear ($p=2$)-case, any simply connected Riemann surface is conformally equivalent to either the sphere (surface of elliptic type), the Euclidean plane (surface of parabolic type) or the hyperbolic plane (surface of hyperbolic type). Since the sphere is the only one with compact surface, the \emph{type problem} is to decide whether a surface is of hyperbolic or of parabolic type. See also  \cite{Gri99} and references therein for more details. 
\end{remark}

The following is an immediate consequence of the definition.
\begin{lemma}
\label{parabolicity = null seq}
{For every $p\in (1,\infty)$}, the following are equivalent:
\begin{enumerate}[label=(\roman*)]
\item\label{thm:parab1} the graph $G=(X,b,m)$ is $p$-parabolic;
\item\label{thm:parab2} for every  finite set  $ K $ and for every $\alpha>0$ there exists $ (e_n) \sse C_c(X) $ such that $ e_n=\alpha$ on $ K $ and $ \E_p(e_n) \to 0 $ as $ n \to \infty $;
\item\label{thm:parab2'} there exists $  x  \in X $, $\alpha>0$ and $ (e_n) \sse C_c(X) $ with $ e_n(x) = \alpha$ for every $ n $ and $ \E_p(e_n) \to 0 $ as $ n \to \infty $;
\end{enumerate}
A sequence  $ (e_n) $ in $ C_c(X) $ satisfying condition \ref{thm:parab2'} is called a \emph{null-sequence}.
\end{lemma}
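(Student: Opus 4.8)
The plan is to prove the cyclic chain \ref{thm:parab1} $\Rightarrow$ \ref{thm:parab2} $\Rightarrow$ \ref{thm:parab2'} $\Rightarrow$ \ref{thm:parab1}. The first two implications are routine and the substance lies in \ref{thm:parab2'} $\Rightarrow$ \ref{thm:parab1}. For \ref{thm:parab1} $\Rightarrow$ \ref{thm:parab2}, I fix a finite set $K$; since $\cc_p(K)=0$, by definition of the capacity there is a sequence $\phi_n\in C_c(X)$ with $\phi_n\geq 1$ on $K$ and $\E_p(\phi_n)\to 0$. Applying the normal contraction $C(t)=(t\vee 0)\wedge 1$ and using that $\E_p$ is Markovian, I replace $\phi_n$ by $e_n:=(\phi_n\vee 0)\wedge 1$, which satisfies $e_n=1$ on $K$ and $\E_p(e_n)\leq \E_p(\phi_n)\to 0$; this is \ref{thm:parab2} with $\alpha=1$. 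The implication \ref{thm:parab2} $\Rightarrow$ \ref{thm:parab2'} is immediate, by applying \ref{thm:parab2} to the singleton $K=\{x\}$ for an arbitrary fixed vertex $x$.

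For \ref{thm:parab2'} $\Rightarrow$ \ref{thm:parab1}, let $(e_n)\subseteq C_c(X)$ satisfy $e_n(x)=\alpha>0$ and $\E_p(e_n)\to 0$. The first step is to show $e_n\to \alpha$ pointwise on all of $X$. For a single edge $u\sim v$, symmetry of $b$ gives $b(u,v)\abs{\nabla_{u,v}e_n}^p\leq \E_p(e_n)$, whence $\abs{\nabla_{u,v}e_n}\to 0$ because $b(u,v)>0$; chaining this along a path from $x$ to an arbitrary vertex $y$ (which exists by connectedness) and using the triangle inequality yields $e_n(y)\to e_n(x)=\alpha$ (this is also contained in \eqref{nablaestimate} and its $p=1$ analogue). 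In particular, for each $y$ the potential term forces $c(y)\alpha^p=\lim_n c(y)\abs{e_n(y)}^p\leq \lim_n\E_p(e_n)=0$, so $c\equiv 0$; this is consistent with Remark~\ref{rmk: cnot0}, since otherwise the graph could not be $p$-parabolic.

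Now fix an arbitrary finite set $K$ and set $f_n:=e_n/\alpha$, so $f_n\to 1$ pointwise and $\E_p(f_n)=\alpha^{-p}\E_p(e_n)\to 0$. I define $\psi_n\in C_c(X)$ by $\psi_n:=f_n$ off $K$ and $\psi_n:=1$ on $K$; then $\psi_n\geq 1$ on $K$, so $\cc_p(K)\leq \E_p(\psi_n)$. Writing $\psi_n=f_n+r_n$ with $r_n:=\sum_{y\in K}(1-f_n(y))\mathds{1}_y$ supported on the finite set $K$, Minkowski's inequality for the seminorm $\E_p^{1/p}$ gives $\E_p(\psi_n)^{1/p}\leq \E_p(f_n)^{1/p}+\E_p(r_n)^{1/p}$. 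The first term vanishes in the limit; for the second, since $c\equiv 0$ and $r_n$ is supported on $K$, only edges incident to $K$ contribute, yielding a bound of the form $\E_p(r_n)\leq C_K\sum_{y\in K}\abs{1-f_n(y)}^p$, where $C_K$ depends only on the finitely many degrees $\deg(y)$, $y\in K$, and the weights among vertices of $K$. As $K$ is finite and $f_n(y)\to 1$ for each $y\in K$, this tends to $0$, so $\cc_p(K)\leq \E_p(\psi_n)\to 0$, proving $p$-parabolicity. (One could equally reduce to singletons first via subadditivity \ref{cap3}, since $\cc_p(K)\leq\sum_{y\in K}\cc_p(\{y\})$, and then modify at a single vertex.)

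I expect the main obstacle to be exactly this last step: converting the pointwise-convergent, low-energy sequence $f_n$ into genuine competitors for $\cc_p(K)$ that are $\geq 1$ on $K$ without inflating the energy. This is precisely where the local summability hypothesis is used — the correction $r_n$ lives on the finite set $K$, and the finiteness of the degrees $\deg(y)$ for $y\in K$ is what keeps $\E_p(r_n)$ controlled as $1-f_n(y)\to 0$.
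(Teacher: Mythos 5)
Your proof is correct and follows essentially the same route as the paper's: the cyclic chain is identical, and the heart of \ref{thm:parab2'} $\Rightarrow$ \ref{thm:parab1} in both cases is chaining the energy along a path to get $\abs{\nabla_{x,y}e_n}^p\leq C(x,y)\E_p(e_n)$ and hence pointwise convergence of $e_n$ to $\alpha$. The only difference is the final patching on $K$: the paper truncates via $e_n\wedge\alpha$ and invokes Markovianity, whereas you add the finitely supported correction $r_n$ and control $\E_p(r_n)$ by Minkowski together with local summability of $b$ on $K$ --- both work.
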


{We note that  in the above statement arbitrary potentials could be added as long as the $p$-energy functional remains non-negative, and also $p=1$ could be included. }

\begin{proof}
The implication \ref{thm:parab1} $\implies $ \ref{thm:parab2} follows directly from the definition of  $\cc_{p}(K)=0$ and the $p$-homogeneity of $\E_p$.

\noindent
To prove \ref{thm:parab2} $ \implies$ \ref{thm:parab1}  it suffices to consider $ \frac{1}{\alpha} e_n $ which is $ 1 $ on $K$ and {satisfies}
\begin{equation*}
\E_p \left( \alpha^{-1} e_n \right) = \alpha^{-p} \E_p(e_n) \to 0, \qquad p\in [1,\infty),
\end{equation*}
as $ n\to \infty $.

The implication \ref{thm:parab2} $\implies$ \ref{thm:parab2'} is trivial.

Finally, to show that \ref{thm:parab2'} $\implies$ \ref{thm:parab2}, let $(e_n) $ be a sequence as in \ref{thm:parab2'}. We claim that $ e_n \to \alpha $ uniformly on every finite subset of $ X $. Indeed, given $y \in X $, let $x= x_0\sim x_1 \sim \ldots \sim x_k=y $ be a path joining $ x$ to $ y$. Then, as in the proof of Lemma~\ref{lem:uniform}, for all $p\in [1,\infty)$ there is a constant $0<C(x,y)<\infty$ such that
\begin{align*}
\abs{\nabla_{x,y}e_n}^p&\leq C(x,y)\E_p(e_n). 
\end{align*}
Since $\E_p(e_n)\to 0 $ as $ n \to \infty$, we have $\min_{K} e_n \to \alpha$. Hence, there exists $n_0\in \NN$ such that for all $n\geq n_0$ we have $\min_K e_n > 0$.

Now let us consider $ \phi_n=\beta (\frac{e_n}{\min_K e_n} \wedge 1) $ for all $n\geq n_0$ and some $\beta >0$. Since $\E_p$ is Markovian and cutting at one is a normal contraction, we get
\[ \E_p(\phi_n)= \beta^p \E_p (\frac{e_n}{\min_K e_n}\wedge 1)\leq \beta^p \E_p(\frac{e_n}{\min_K e_n})= \frac{\beta^p}{(\min_K e_n)^p} \E_p(e_n)\to 0 \]
as $n\to \infty$. Moreover, by construction, $\phi_n = \beta$ on $ K$. Thus, $(\phi_n)_{n\geq n_0}$ is the desired null-sequence.
\end{proof}
The following is just an immediate consequence of Lemma~\ref{parabolicity = null seq}.
\begin{corollary}\label{cor:cap}
	The graph is $p$-parabolic if and only if there exists $x\in X$ such that $\cc_p(x)=0$.
\end{corollary}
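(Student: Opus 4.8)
The plan is to read off both implications directly from Lemma~\ref{parabolicity = null seq}, since the statement is essentially a specialization of the equivalence \ref{thm:parab1} $\Leftrightarrow$ \ref{thm:parab2'} to the case of a single vertex. The assumption $c=0$ plays no role in the logical structure here; it is merely the standing hypothesis under which $p$-parabolicity is a non-degenerate notion (cf.\ Remark~\ref{rmk: cnot0}), so I would note this at the outset and then proceed.

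For the forward implication, I would simply invoke Definition~\ref{def_p-parabolicity}: if $G$ is $p$-parabolic, then $\cc_p(K)=0$ for \emph{every} finite $K\sse X$, and in particular $\cc_p(\{x\})=\cc_p(x)=0$ for each $x\in X$. Thus the existence of a vertex $x$ with $\cc_p(x)=0$ is automatic (indeed it holds for all $x$), and no further argument is needed.

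For the reverse implication, I would unwind the definition of the $p$-capacity. Suppose $\cc_p(x)=0$ for some $x\in X$. Then there exists a sequence $(\phi_n)\sse C_c(X)$ with $\phi_n\geq 1$ on $\{x\}$ and $\E_p(\phi_n)\to 0$. Setting $e_n:=\phi_n\wedge 1$, the Markovian property of $\E_p$ gives $\E_p(e_n)\leq\E_p(\phi_n)\to 0$, while $e_n(x)=1$ and $e_n\in C_c(X)$. Hence $(e_n)$ is exactly a null-sequence in the sense of condition \ref{thm:parab2'}, with $\alpha=1$. Applying the implication \ref{thm:parab2'} $\implies$ \ref{thm:parab1} of Lemma~\ref{parabolicity = null seq} then yields that $G$ is $p$-parabolic, completing the proof.

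I do not expect any genuine obstacle: the entire content is the observation that a minimizing sequence for $\cc_p(x)=0$ can be normalized (via a normal contraction) into a null-sequence. The only point requiring a moment of care is the normalization step, where one must ensure that truncating $\phi_n$ to $\phi_n\wedge 1$ both fixes the value at $x$ and does not increase the energy; this is precisely what the Markovian property guarantees, so the argument is immediate as claimed.
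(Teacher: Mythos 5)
Your proof is correct and follows exactly the route the paper intends: the paper states the corollary as ``an immediate consequence of Lemma~\ref{parabolicity = null seq}'', and your argument simply fills in the details of that reduction, with the truncation $\phi_n\wedge 1$ (justified by the Markov property) supplying the normalization needed to match condition \ref{thm:parab2'} of the lemma. No gaps.
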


\section{Characterizations of $p$-Parabolicity}\label{s:chara}
In this section we provide several characterizations  of $p$-parabolicity for summable infinite graphs. In the case of locally finite graphs, some of these results were proven in \cite{Y77,Y84,Soa2, Soa1, SY93Class, SY93Para, Prado}. However, replacing the assumption of local finiteness with mere summability introduces non-trivial difficulties, and therefore this section represents a novelty both in the context and the content. For non-locally finite graph in the linear setting $p=2$ see e.g. \cite[Chapter~6]{KLW21} and references therein.

\subsection{Green's Functions}\label{s:Green}
Recall that $p\in (1,\infty)$.
In the continuous setting, it is classical that $p$-hyperbolicity is also  equivalent to the existence of a Green's function with pole at $o\in X$, for some/all $o\in X$, that is, a minimal positive solution to the problem
\begin{equation}
\label{Green-def}
\Delta_p  g(x,o)= \hat{\mathds{1}}_o(x):= \frac 1{m(x)} \mathds{1}_o(x).
\end{equation}
A Green's function, when it exists, is closely related to non-constant  solutions of the following problem
\begin{equation}\label{p-harmonic potential}
\begin{cases}
\Delta_p u= 0& \text{in } X\setminus \{o\};\\
u(o)=1,  \quad  &0\leq u\leq 1.
\end{cases}
\end{equation}
Indeed, if  $u$ is a non-constant solution to \eqref{p-harmonic potential}, then $\Delta_p u(o)>0$ and it follows that a suitable multiple of $u$ is a positive solution to \eqref{Green-def}.   Minimality is then achieved by performing a suitable translation. Note also that in this case, $u$ is the $p$-harmonic potential for $(o,X)$, i.e., a  minimizer for $\cc_p(o)$.

In the following proposition we collect some of the properties of solutions to  \eqref{p-harmonic potential}.
The proof uses uniform convexity.

\begin{proposition}
\label{existence potential}
Let $o\in X$ and let $\{X_n\}$ be an increasing exhaustion of $X$ by finite and connected sets such that $o\in X_1$ and
$X_{n+1}\setminus X_n \subset \partial_e X_n$.
\begin{enumerate}[label=(\alph*)]
\item\label{existence potential i} For every $n$ there exists $u_n\in C(X)$ such that
\[
\begin{cases}
\Delta_p u_n=0\,\text{ in }  X_n\setminus \{o\}&\\
u_n(o)=1, \,\, u_n=0 \text{ on } X\setminus X_n,&
\end{cases}
\]
and
\[
\E_p(u_n)=\cc_p(o, X_n).
\]
Moreover $0\leq u_n\leq u_{n+1}\leq 1$.
\item \label{existence potential ii}
As $n\to \infty$, $u_n$ tends monotonically from below pointwise and in $D_{0}^{p}$ to a function $u\in D_{0}^{p}$, referred to as the $p$-harmonic potential at $o$, which satisfies \eqref{p-harmonic potential},  $0< u\leq 1$ and for which
\[
\E_p(u)=\cc_p(o).
\]
Moreover, $u$ is the smallest positive supersolution to \eqref{p-harmonic potential}, i.e., for all positive functions $v\in\FF^p$ such that $\Delta_{p}v\geq 0$ on $X\setminus \set{o}$ and $v(o)\geq 1$, we have $v\geq u$.
\item\label{existence potential iii} $u$ is constant if and only if $\cc_p(o)=0$ if and only if $\Delta_p u(o)=0$. Moreover, if $u$ is not constant then  $\inf u=0$.
\item\label{existence potential iv} The Green's function with pole at $o$ exists if and only if $u$ is not constant and in that case
$g(x,o)=cu(x)$ with $c=[m(o)\Delta_pu(o)]^{-1/(p-1)}$. In particular, $g(x,o)\in D_{0}^{p}$ and $g(o,o)^{-1/(p-1)}g(x,o)$ is the $p$-harmonic potential of $(o,X)$.  
\end{enumerate}
\end{proposition}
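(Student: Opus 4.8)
The plan is to produce $u_n$ as the unique energy minimizer of the capacity problem on $X_n$, and then to transfer everything to the limit using the reflexivity and uniform convexity of $(D_{0}^{p},\norm{\cdot}_{o,p})$ from Lemma~\ref{lem:uniform} and Lemma~\ref{lem:limsup}. For part~\ref{existence potential i} I would minimize $\E_p$ over the set of $\phi\in C_c(X_n)$ with $\phi(o)=1$; since $X_n$ is finite this is the minimization of a continuous, strictly convex ($p>1$) functional over a compact convex subset of $C(X_n)$, so a unique minimizer $u_n$ exists, and the Markov property forces $0\le u_n\le1$. Differentiating the energy in the free variables $\phi(x)$, $x\in X_n\setminus\set{o}$, gives the Euler--Lagrange equation $\Delta_p u_n(x)=0$ there, while $u_n=0$ off $X_n$ and $u_n(o)=1$ by construction. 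By Lemma~\ref{lem:GreensFormula} (with $c=0$, so $L_p=\Delta_p$) we then get $\E_p(u_n)=\ip{\Delta_p u_n}{u_n}_X=m(o)\Delta_p u_n(o)=\cc_p(o,X_n)$, since only $x=o$ survives. For $u_n\le u_{n+1}$ I would use the submodularity $\E_p(\phi\wedge\psi)+\E_p(\phi\vee\psi)\le\E_p(\phi)+\E_p(\psi)$, verified edgewise as in the capacity lemma: as $u_n\wedge u_{n+1}$ and $u_n\vee u_{n+1}$ are admissible competitors on $X_n$ and $X_{n+1}$, minimality plus uniqueness forces $u_n\wedge u_{n+1}=u_n$.

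For part~\ref{existence potential ii}, the bounds $0\le u_n\le u_{n+1}\le1$ give monotone pointwise convergence to some $u$ with $u(o)=1$. Monotonicity of capacity yields $\E_p(u_n)=\cc_p(o,X_n)\downarrow\cc_p(o)$, so $(u_n)$ is bounded in $(D_{0}^{p},\norm{\cdot}_{o,p})$; by reflexivity a subsequence converges weakly, and since point evaluations are continuous on $D_{0}^{p}$ (by \eqref{nablaestimate}) the weak limit equals $u$, whence $u\in D_{0}^{p}$. Lower semicontinuity of $\E_p$ gives $\E_p(u)\le\cc_p(o)$, while admissibility of $u$ gives $\E_p(u)\ge\cc_p(o)$, so $\E_p(u)=\cc_p(o)=\lim\E_p(u_n)$, and Lemma~\ref{lem:limsup} upgrades pointwise to $\norm{\cdot}_{o,p}$-convergence. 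Passing to the limit in $\Delta_p u_n(x)=0$ for $x\ne o$ is legitimate by dominated convergence, since $0\le u_n\le1$ forces $b(x,\cdot)\,\abs{\p{\nabla_{x,\cdot}u_n}}\le b(x,\cdot)$, which is summable by local summability; this proves $u$ solves \eqref{p-harmonic potential}. Positivity $u>0$ then follows from a minimum-principle argument: if $u(x)=0$ at some $x\ne o$, then $\Delta_p u(x)=0$ forces $u\equiv0$ on the neighbors, and connectedness propagates this to $o$, contradicting $u(o)=1$.

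For part~\ref{existence potential iii}, passing to the limit in $\E_p(u_n)=m(o)\Delta_p u_n(o)$ (again dominated convergence on the right, and $\E_p(u_n)\to\E_p(u)$) yields the key identity $\cc_p(o)=\E_p(u)=m(o)\Delta_p u(o)$, which at once makes $\cc_p(o)=0$ equivalent to $\Delta_p u(o)=0$; and $\E_p(u)=0$ is equivalent to $u$ being constant on the connected graph, closing the triple equivalence. The claim $\inf u=0$ in the non-constant case is the most delicate conceptual point. Here I would first note that $D_{0}^{p}$ is invariant under normal contractions $C$ (bound $(C(u_n))$ in $D_{0}^{p}$ via the Markov property and extract a weak limit, which equals $C(u)$ by pointwise convergence). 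If $\eta:=\inf u>0$, applying the normal contraction $t\mapsto t\wedge\eta$ sends $u$ to the constant $\eta$, so $\eta\,\mathds{1}_X\in D_{0}^{p}$ and hence $\mathds{1}_X\in D_{0}^{p}$. But $\mathds{1}_X\in D_{0}^{p}$ means $\norm{\phi_k-\mathds{1}_X}_{o,p}\to0$ for some $\phi_k\in C_c(X)$, i.e. $\E_p(\phi_k)\to0$ with $\phi_k(o)\to1$, a null-sequence; by Lemma~\ref{parabolicity = null seq} the graph is $p$-parabolic and $\cc_p(o)=0$, contradicting non-constancy.

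For part~\ref{existence potential iv}, when $u$ is non-constant set $g(\cdot,o):=cu$ with $c=[m(o)\Delta_p u(o)]^{-1/(p-1)}$ (well defined and positive since $\Delta_p u(o)>0$). Using $\p{c\,a}=c^{p-1}\p{a}$ for $c>0$ one checks directly that $\Delta_p g(\cdot,o)=\hat{\mathds{1}}_o$, i.e. \eqref{Green-def} holds, that $g(\cdot,o)>0$, and that $g(\cdot,o)=cu\in D_{0}^{p}$; a suitable normalization of $g(\cdot,o)$ (dividing by its value at $o$) recovers the potential $u$. The crux here is minimality, which forces me to invoke a comparison principle for $\Delta_p$ not recorded in the excerpt. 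I would compare the finite Green's functions $g_n:=\cc_p(o,X_n)^{-1/(p-1)}u_n$, which solve $\Delta_p g_n=\hat{\mathds{1}}_o$ on $X_n$ and vanish off $X_n$, with any positive solution $h$: equality of the operators on $X_n$ together with $g_n=0\le h$ outside $X_n$ gives $g_n\le h$ by the comparison principle, so $g=\lim g_n\le h$ is minimal. Running the same comparison when $u$ is constant (so $\cc_p(o,X_n)\to0$ and $g_n\uparrow+\infty$) shows no finite positive solution can exist, yielding the converse. I expect the two main obstacles to be precisely (i) this comparison principle on finite sets, which I would have to prove separately from the strict monotonicity of $a\mapsto\p a$, and (ii) the justification of $\inf u=0$ through normal-contraction invariance of $D_{0}^{p}$ in the non-locally finite setting.
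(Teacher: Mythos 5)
Your proposal is correct, but it diverges from the paper's proof at several junctures, and in each case both routes are viable. For the monotonicity $u_n\le u_{n+1}$ the paper invokes the weak comparison principle (Lemma~\ref{lem:WCPNonNegativePotential}); your submodularity-plus-uniqueness argument also works, but note that the edgewise inequality $\abs{\nabla_{x,y}(\phi\wedge\psi)}^p+\abs{\nabla_{x,y}(\phi\vee\psi)}^p\le\abs{\nabla_{x,y}\phi}^p+\abs{\nabla_{x,y}\psi}^p$ is strictly stronger than the one proved in the capacity lemma and, in the case where the orderings of $\phi,\psi$ cross along an edge, requires the convexity of $t\mapsto\abs{t}^p$, not just a case analysis. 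In part~\ref{existence potential ii} the paper first solves the global minimization problem over $\set{v\in D_{0}^{p}\,:\, v(o)=1,\ 0\le v\le 1}$ and then identifies $\lim_n u_n$ with that minimizer via Fatou's lemma and uniqueness, whereas you build $u$ directly as the monotone limit and pass to the limit in the equation by dominated convergence; your route avoids repeating the Euler--Lagrange computation at the cost of the weak-compactness bookkeeping. Your treatment of part~\ref{existence potential iii} through the limit identity $\cc_p(o)=\E_p(u)=m(o)\Delta_p u(o)$ is arguably cleaner than the paper's, which uses the strong maximum principle for the implication $\Delta_p u(o)=0\Rightarrow u$ constant; and your derivation of $\inf u=0$ via normal-contraction invariance of $D_{0}^{p}$ and the null-sequence characterization replaces the paper's comparison with the renormalized solution $\hat u=(1-\gamma)^{-1}(u-\gamma)$ --- there is no circularity in your route, since the step $1\in D_{0}^{p}\Rightarrow \cc_p(o)=0$ needs only Lemma~\ref{parabolicity = null seq}, not the full Proposition~\ref{prop:D=D_{0}^{p}}. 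In part~\ref{existence potential iv} your converse (if $u$ is constant then $g_n=\cc_p(o,X_n)^{-1/(p-1)}u_n\to\infty$ pointwise while staying below any putative positive solution) is more direct than the paper's identification argument. Two small repairs: the constraint set in part~\ref{existence potential i} is a closed affine subspace, not compact, so existence of the minimizer should be deduced from coercivity via \eqref{nablaestimate} (or, as in the paper, from closed convexity in the uniformly convex space $(D_{0}^{p},\norm{\cdot}_{o,p})$); and the comparison principle you flag as missing is exactly Lemma~\ref{lem:WCPNonNegativePotential} in the appendix, whose hypothesis that $(v-u)\wedge 0$ attain a minimum is automatic on the finite sets $X_n$ where you apply it.
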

\begin{proof}
The proof follows from standard variational arguments. For completeness and the convenience of the reader, we provide an outline of the arguments.

\noindent
\ref{existence potential i}: Since the set
\[
\mathcal{K}_n=\{\phi \in C(X)\,:\, \phi(o)=1,\,\, \phi=0 \,\text{ on }\, X\setminus X_n\}
\]
is closed and convex in the uniformly convex space $(D_{0}^{p}, \norm{\cdot}_{o,p})$ and  since $\E_p$ and $\norm{\cdot}_{o,p}$ have the same minimizers on $\mathcal{K}_n$, $\E_p$ admits a (unique) minimizer $u_n$ which satisfies $u_n(o)=1$ and $\cc_p(o,X_n)= \E_p(u_n)\leq \E_p(\phi)$ for all $\phi \in \mathcal{K}_n$. For every $\phi_0$ supported in $X_n\setminus \{o\}$, we have $u_n+t\phi_0\in \mathcal{K}_n$ and
\[
\frac{\E_p(u_n+t\phi_0)-\E_p(u_n)}t = \frac{1}{2}\sum_{x,y} b(x,y)\frac{|\nabla_{x,y}u_n+t\nabla_{x,y}\phi_0|^p-|\nabla_{x,y}u_n |^p}t.
\]
As $t\to 0$, the fraction inside the summation tends to
 \[
 p|\nabla_{x,y} u_n|^{p-2}\nabla_{x,y} u_n \nabla_{x,y} \phi_0 ,
 \]
 and an application of the mean value theorem together with Young's inequality shows that
 \[
 \left|\frac{|\nabla_{x,y} u_n + t \nabla_{x,y} \varphi_0|^p-|\nabla_{x,y} u_n|^p}t\right|
 \leq
 C_p( |\nabla_{x,y} u_n |^p +|\nabla_{x,y} \varphi_0|^p),
 \]
 so that, letting $t\to 0$ and applying dominated convergence yield
 \begin{equation}
 \label{variation}
0=\frac{1}{2}\sum_{x,y} b(x,y) \p{\nabla_{x,y}u_n}\nabla_{x,y}\phi_0=\E_{p,X}(u_n,\phi_0).
 \end{equation}
In particular, letting $\phi_0=\mathds{1}_{x_o}$ with $x_o\in X_n\setminus \{o\}$ shows that
\[
\Delta_p u_n(x_o)=0.
\]
The last statement in \ref{existence potential i} follows by comparison, see Lemma~\ref{lem:WCPNonNegativePotential}.

\noindent
\ref{existence potential ii}: Replacing $\mathcal{K}_n$ with
\[\mathcal{K}=
\{v\in D_{0}^{p}\,:\, v(o)=1,\,0\leq v \leq 1 \}
\]
and arguing as above shows that there exists a unique $u\in \mathcal{K}$ such that $\E_p(u)=\cc_p(o)$, and $u$ satisfies \eqref{p-harmonic potential} and by the strong minimum principle $0< u\leq 1$, see e.g. Lemma~\ref{max_principle}. By comparison, $0\leq u_n\leq u$ so that the sequence
$(u_n)$ constructed in \ref{existence potential i} converges monotonically to a function $\tilde u$ satisfying $u_n\leq \tilde u\leq u$. By Fatou's lemma
\[
\E_p(\tilde u)\leq \lim_n \E_p(u_n)=\lim_n \cc_p(o,X_n)=\lim_n\cc_p(o)=\E_p(u),
\]
so, by uniqueness of the minimizer, $u=\tilde u$. It follows from Lemma~\ref{lem:limsup} that 
$u_n\to u$ in  $\norm{\cdot}_{o,p}$-norm,  as required.

That $u$ is the smallest positive supersolution to \eqref{p-harmonic potential} is another consequence of the comparison principle, Lemma~\ref{lem:WCPNonNegativePotential}. Let $v$ be another supersolution. Since $X_n$ is finite, $(v-u_n)\wedge 0$ attains its minimum on $X_n$. By Lemma~\ref{lem:WCPNonNegativePotential}, $u_n\leq v$ on $X$. Since $u_n \nearrow u$, we get $u\leq v$.

\noindent
\ref{existence potential iii}: Since $\E_p(u)=\cc_p(o)$ and, by connectedness, $u$ is constant if and only if $\E_p(u)=0$, the first assertion follows immediately.  It is obvious that if $u $ is constant then $\Delta_p u(o)=0$. On the other hand, if $\Delta_p u(o)=0$ then $u$ is $p$-harmonic on $X$ and, since $u\leq 1$ and $u(o)=1$, $u$ is constant by the strong maximum principle. To prove the last assertion, assume that $u$ is not constant and let  $\inf u=\gamma$. Then  $\hat u = (1-\gamma)^{-1} (u-\gamma)$ is a positive solution of \eqref{p-harmonic potential} with $\inf \hat u=0$. The comparison argument above shows that  
$ u\leq \hat u$ so that $\inf  u=\inf \hat u=0$.

\noindent
\ref{existence potential iv}: Assume first that the function $u$ constructed in \ref{existence potential ii} is not constant. Since $u(o)=1$ while $u(x)\leq 1$, $\Delta_p u(o)\geq 0$ and therefore $\Delta_pu(o)>0$ by \ref{existence potential iii}. A straightforward computation shows that, if $c=[m(o)\Delta_pu(o)]^{-1/(p-1)}$, then the function $\tilde u=c u(x)$   is non-negative, satisfies  $\Delta_p  \tilde u= \hat{\mathds{1}}_o$ and $\inf \tilde u =0$ so that $g(x,o)=\tilde u(x)$ is the required  minimal positive solution of \eqref{Green-def}.  For the converse, assume that there exists a minimal positive solution $g(x,o)$ to \eqref{Green-def}. We claim that if $c=g(o,o)$ then  $v=c^{-1}g(x,o)$ is the function $u$ constructed in \ref{existence potential ii} and therefore $u$ is not constant and $g(x,o)\in D_{0}^{p}$. To prove the claim, note that it follows from the usual comparison argument that $u_n\leq  v$ for every $n$, so that $ u=\lim_n u_n\leq v$. Since $u(o)=v(o)=1$, $\Delta_p u(o) \geq \Delta_p v(o)>0$. It follows that $\hat u = \left ( \Delta_p v(0)/\Delta_p u(0)\right)^{1/(p-1)} u$ satisfies
\[
\Delta_p \hat u= \Delta_p v\quad \text{and}\quad \hat u\leq u\leq v.
\]
We conclude that $0\leq c\hat u\leq c v=g(x,o)$ satisfies $\Delta_p(c \hat u)=\Delta_p g(\cdot, o)$. By minimality, $c\hat u=g(\cdot,o)$ and the claim follows.
\end{proof}
By using a different strategy a Green's function has been constructed in \cite{F:AAP} for more general $p$-Schrödinger operators.

Combining Lemma \ref{parabolicity = null seq} and Proposition \ref{existence potential}  we obtain the following classical characterizations of $p$-parabolicity in terms of existence of null sequences,  non-existence of Green's functions and vanishing of nonnegative superharmonic functions.

{\begin{theorem}\label{thm:criticalNew}
	The following statements are equivalent:
	\begin{enumerate}[label=(\roman*)]
		\item\label{thm:critical1New} The graph $G=(X,b,m)$ is $p$-parabolic.
\item\label{thm:critical2New} For any $o\in X$ and $\alpha>0$ there is a null-sequence $(e_n)$ in $X$ such that $e_n(o)=\alpha$, $n\in\NN$.	
		\item\label{thm:critical9New} For some/every $o\in X$  there does not exist a Green's function with pole at $o$.
\item\label{thm:Liouville p-superharmonic} If $u\geq 0$ satisfies $\Delta_p u\geq 0$ on $X$ then $u$ is constant.
	\end{enumerate}
\end{theorem}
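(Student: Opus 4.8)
The plan is to prove that each of (ii), (iii), (iv) is equivalent to (i), using Lemma~\ref{parabolicity = null seq} and Proposition~\ref{existence potential} as the backbone, together with the base-vertex independence of parabolicity recorded in Corollary~\ref{cor:cap}. The equivalence (i)$\Leftrightarrow$(ii) is almost entirely contained in Lemma~\ref{parabolicity = null seq}, which already characterizes $p$-parabolicity by the existence of a null-sequence attaining a fixed positive value at a single vertex. To obtain the uniform formulation in (ii), I would first invoke Corollary~\ref{cor:cap} to see that $\cc_p(o)=0$ at one vertex forces $\cc_p(o')=0$ at every $o'$, so that a null-sequence may be produced at an arbitrary base point $o$; a rescaling $e_n\mapsto(\alpha/\beta)e_n$ multiplies $\E_p$ by $(\alpha/\beta)^p$ and hence realizes any prescribed value $\alpha>0$ at $o$.

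For (i)$\Leftrightarrow$(iii) I would read the statement directly off Proposition~\ref{existence potential}. By parts~\ref{existence potential iii} and~\ref{existence potential iv}, for a fixed $o$ the Green's function with pole at $o$ exists if and only if the $p$-harmonic potential $u$ at $o$ is nonconstant, which in turn holds if and only if $\cc_p(o)>0$; by Corollary~\ref{cor:cap} the latter is precisely $p$-hyperbolicity. Taking contrapositives yields the equivalence for each fixed $o$, and the base-vertex independence of parabolicity then delivers the ``some/every'' phrasing.

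The genuinely new content is (i)$\Leftrightarrow$(iv), whose two directions I would handle separately. For the contrapositive of (iv)$\Rightarrow$(i), suppose the graph is $p$-hyperbolic; by (i)$\Leftrightarrow$(iii) the Green's function $g(\cdot,o)$ exists, and by \eqref{Green-def} it satisfies $\Delta_p g(\cdot,o)=\hat{\mathds 1}_o\ge 0$ while being positive and nonconstant, so it is a nonnegative nonconstant superharmonic function, contradicting (iv). Conversely, for (i)$\Rightarrow$(iv) let $u\ge 0$ satisfy $\Delta_p u\ge 0$. Since $c=0$, the operator $\Delta_p$ is invariant under the addition of constants, so after replacing $u$ by $u-\inf_X u$ I may assume $\inf_X u=0$ with $u$ still nonnegative and superharmonic. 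Fixing any vertex $o$ and writing $a=u(o)$, I would compare $u$ with the rescaled potentials $au_n$ from Proposition~\ref{existence potential}\ref{existence potential i}: on the finite set $X_n\setminus\{o\}$ the function $au_n$ is $p$-harmonic while $u$ is superharmonic, and on the exterior boundary one has $au_n=0\le u$ on $X\setminus X_n$ together with $au_n(o)=a=u(o)$. The weak comparison principle (Lemma~\ref{lem:WCPNonNegativePotential}) then gives $au_n\le u$ on $X_n$, and letting $n\to\infty$, with $u_n\nearrow 1$ under parabolicity by Proposition~\ref{existence potential}\ref{existence potential iii} and Corollary~\ref{cor:cap}, yields $u\ge a=u(o)$ everywhere. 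As $o$ was arbitrary, every value of $u$ equals its infimum $0$, so $u\equiv 0$, and undoing the translation shows the original $u$ is constant.

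I expect the main obstacle to be precisely this comparison step: on a non-locally-finite graph the finite set $X_n\setminus\{o\}$ may have an infinite exterior boundary, so the weak comparison principle must be available in a form that tolerates infinite boundaries, with convergence of the relevant sums guaranteed only by local summability together with the membership $u\in\FF^p$. This is exactly the juncture at which passing from local finiteness to mere summability demands genuine care, and it makes Lemma~\ref{lem:WCPNonNegativePotential} the load-bearing ingredient of the proof of (i)$\Rightarrow$(iv).
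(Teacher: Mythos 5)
Your proposal is correct and follows essentially the same route as the paper: (i)$\Leftrightarrow$(ii) from Lemma~\ref{parabolicity = null seq}, (i)$\Leftrightarrow$(iii) from Proposition~\ref{existence potential}, (iv)$\Rightarrow$(iii) because a Green's function is a nonconstant nonnegative superharmonic function, and (i)$\Rightarrow$(iv) by comparing the given nonnegative superharmonic function with the exhaustion potentials $u_n$ and using that $u_n\nearrow 1$ under parabolicity. The only cosmetic difference is that the paper normalizes the superharmonic function to take values in $(0,1]$ and compares it directly with the limit potential $u$, whereas you rescale the $u_n$ by the value at the base point and then vary that point; both versions hinge on the same application of Lemma~\ref{lem:WCPNonNegativePotential} on the finite sets $X_n\setminus\{o\}$, which, as you correctly note, tolerates infinite exterior boundaries.
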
}

\begin{proof}
\ref{thm:critical1New} $\iff$ \ref{thm:critical2New} is the content of Lemma \ref{parabolicity = null seq}, while the equivalence
\ref{thm:critical1New} $\iff$ \ref{thm:critical9New} follows from Proposition~\ref{existence potential}.
Finally, since a Green's function is a nonnegative superharmonic function, clearly  \ref{thm:Liouville p-superharmonic} implies \ref{thm:critical9New} again by Proposition~\ref{existence potential}. On the other hand, let $v$ be  $p$-superharmonic and non-negative. By the strong minumum principle, Lemma~\ref{max_principle}, $v$ is stricly positive and by truncating and multiplying it by a constant we may assume that $0<v\leq 1$ on $X$. Arguing as in the proof of Proposition \ref{existence potential} shows that $1\ge v\ge u$ is the function constructed there. If $X$ is $p$-parabolic, $u\equiv 1$ and therefore also $v\equiv 1$, as required to conclude.
\end{proof}

\subsection{The Functional-Theoretic Perspective}\label{s:func}

The following is the main result of this subsection and the generalisation of \cite[Section~6.2]{KLW21} from $p=2$ to $p\in (1,\infty)$. We remark that equivalence (\ref{D2} $\iff$ \ref{D3}) was already observed  in \cite{Y77}, and (\ref{D2} $\iff$ \ref{D9}) in \cite{MY92}, both in the setting of locally finite graphs.

\begin{proposition}[$\DD^p=\DD_{0}^{p}$]\label{prop:D=D_{0}^{p}}
	The following are equivalent:
	\begin{enumerate}[label=(\roman*)]
	\item\label{D1} The graph $G=(X,b,m)$ is $p$-parabolic.
	\item\label{D2} $\DD^p=\DD_{0}^{p}$.
\item\label{D2A} $\DD^p\cap \ell^p(X,\tilde{m})=\DD_{0}^{p}\cap \ell^p(X,\tilde{m})$ for all measures $0<\tilde{m}\in C(X)$.
	\item\label{D3} $1\in \DD_{0}^{p}$.
	\item \label{D4} There exists a null-sequence $(e_n)$ in $C_c(X)$ such that $0 \leq e_n \nearrow 1$ pointwise.
	\item\label{D5} There exists a sequence $(e_n)$ in $C_c(X)$ such that $e_n\to 1$ pointwise and $\sup_n \E_p(e_n)< \infty$.
	\item\label{D6} There exists $u\in \DD_{0}^{p}$ and a finite set $K\sse X$ such that $\inf_{x\in X\setminus K}u(x)>0$.
	\item\label{D8} The functionals $\E_p^{1/p}$ and $\norm{\cdot}_{o,p}$ are not equivalent on $C_c(X)$ for some (or, equivalently, all) $o\in X$.
	\item\label{D9} $(D_0^p, \E_p^{1/p})$ is not a Banach space.
	\item\label{D7} The point evaluation $\operatorname{ev}_x\colon (\DD_{0}^{p}, \E_p^{1/p})\to \RR, \operatorname{ev}_x(f)=f(x)$ is not continuous for some (or, equivalently, all) $x\in X$.
	\end{enumerate}
\end{proposition}

\begin{proof}
\noindent \ref{D1} $\iff$ \ref{D4}:  By Theorem \ref{thm:criticalNew}, \ref{D4} implies \ref{D1}. For the reverse implication, if \ref{D1} holds then, again by Theorem \ref{thm:criticalNew}, there exists a null-sequence $(e_n)$ in $C_c(X)$ such that $e_n(o)=1$ for all $n$. By \eqref{nablaestimate} in Lemma \ref{lem:uniform}, for all $x\in X$
\[
|1-e_n(x)|^p=|e_n(o)-e_n(x)|^p=|\nabla_{o,x} e_n|^p\leq C_p(o,x)\E_p(e_n)\to 0,
\]
and therefore $e_n\to 1$ pointwise on $X$. Since $\E_p$ is Markovian and $\cdot \wedge 1$ is a normal contraction, $(e_n\wedge 1)$ is a null-sequence which converges from below to $1$. By the monotone subsequence theorem and a diagonal argument, we get an increasing null-sequence.

\noindent\ref{D2} $\implies$ \ref{D2A}: This is trivial.

\noindent\ref{D2A} $\implies$ \ref{D3}: Since \ref{D2A} holds for all measures, it does also hold for a finite measure $m$. Thus, $1\in \ell^p(X,m)$. Since clearly $1\in \DD^p$, we conclude
\[1\in \DD^p\cap \ell^p(X,m)= \DD_{0}^{p}\cap \ell^p(X,m)\sse \DD_{0}^{p}.\]

\noindent\ref{D3} $\implies$ \ref{D4}: Since $1\in \DD_{0}^{p}$ there exists a sequence $(\phi_n)$ in $C_c(X)$ such that $\norm{\phi_n-1}_{o,p}\to 1$. This implies that $\E_p(\phi_n -1)\to 0$, i.e., $\phi_n \to 1$ pointwise, and $\E_p(\phi_n)=\E_p(\phi_n-1)\to 0$. Moreover, consider the normal contraction $N(x)=0 \vee x \wedge 1$, $x\in \RR$. Since $\E_p$ is Markovian, $(N \circ \phi_n)$ is the desired null-sequence.

\noindent\ref{D4} $\implies$ \ref{D5}: Trivial.

\noindent\ref{D5} $\implies$ \ref{D3}: This is an application of the Banach-Saks theorem. Here are the details: By assumption, $(e_n)$ is bounded in the uniformly convex Banach space $(\DD^p,\norm{\cdot}_{o,p})$. By weak sequential compactness and by a celebrated result of Kakutani \cite{K38}, stating that uniformly convex Banach spaces have the Banach-Saks property,  $(e_n)$ has a subsequence, still denoted by $(e_n)$, whose Cesaro means $\phi_n=n^{-1}\sum_{1}^ne_k$ converge to $u$ strongly in $(\DD^p,\norm{\cdot}_{o,p})$. Since $e_n\to 1$ pointwise, also $\phi_n\to 1$ pointwise. Moreover, convergence in $\norm{\cdot}_{o,p}$ implies pointwise convergence by Lemma~\ref{lem:uniform} and we get $1=u\in \DD_{0}^{p}$.

\noindent\ref{D3} \& \ref{D4} $\implies$ \ref{D2}: Let $(e_n)$ be a null-sequence in $C_c(X)$ such that $0\leq e_n\nearrow 1\in \DD_{0}^{p}$ pointwise.

First we show  that every bounded function $f\in \DD^{p}$ can be approximated in $\DD_{0}^{p}$. Since $e_nf\in C_c(X)$ we get using the $p$-triangle inequality,
\begin{multline*}
	\E_p(f-e_nf)\\
	\leq 2^{p-1}\left(\sum_{x\in X}(1-e_n(x))^p\sum_{y\in X}b(x,y)\abs{\nabla_{x,y}f}^p+\sum_{y\in X})\abs{f(y)}^p\sum_{x\in X}b(x,y)\abs{\nabla_{x,y}e_n}^p\right) \\
	\leq 2^{p-1}\left(\sum_{x\in X}(1-e_n(x))^p\sum_{y\in X}b(x,y)\abs{\nabla_{x,y}f}^p+2\norm{f}_\infty^p \E_p(e_n)\right).
\end{multline*}
Now the second term vanishes since $(e_n)$ is a null-sequence, and the first term vanishes using dominated convergence. Hence, $\norm{f-e_nf}_{o,p}\to 0$ as $n\to \infty$, i.e., $\DD^p\cap \ell^\infty(X) \sse \DD_{0}^{p}$.

Next, take $f\in \DD^p$ and define $f_n=-n\vee f\wedge n$, $n\in \NN$. Then $f_n\in \DD^p\cap \ell^\infty(X)$. Clearly, $f_n\to f$ pointwise, and since $\E_p$ is Markovian, i.e., compatible with normal contractions, we have $\limsup_{n\to\infty}\E_p(f_n)\leq \E_p(f)$. Thus, we can use Lemma~\ref{lem:limsup} and get that $f_n\to f$ with respect to $\norm{\cdot}_{o,p}$, i.e., $\DD^p\sse \DD_{0}^{p}$.

\noindent\ref{D3}$\iff$ \ref{D6}: If $1\in \DD_{0}^{p}$, then \ref{D6} is clearly satisfied for $u=1$. On the other side, assume that \ref{D6} holds. Since $\cdot \vee 0 $ is a normal contraction, we may assume that $u\ge 0$. Next, $v:=u+1_K\in \DD_{0}^{p}$. Set $m:=\inf_{X\setminus K} u \wedge 1 >0$. Then, $(v/m) \wedge 1=1$. Moreover, $\cdot \wedge 1$ is a normal contraction, and thus $\E_p(1)=\E_p((v/m)\wedge 1)\leq \E_p(v/m)< \infty$, i.e., $1\in \DD_{0}^{p}$.

\noindent \ref{D3}$\implies$ \ref{D8}: If $\E_p^{1/p}$ and $\norm{\cdot}_{o,p}$ are equivalent on $C_c(X)$, then they are also equivalent on $\DD_{0}^{p}$. So, if $1\in \DD_{0}^{p}$, then $\E_p(1)=0$ but $\norm{1}_{o,p}=1$ and they are not equivalent.

\noindent \ref{D8}$\implies$ \ref{D1}: According to Corollary~\ref{cor:cap}, if the graph is $p$-hyperbolic, then $\cc_p(o)>0$ for all (or some) $o\in X$. Let  $\phi\in C_c(X)$. Since, by definition of capacity, $\cc_p(o)|\phi(o)|^p\leq \E_p(\phi)$, we have
\[\cc_p(o)\norm{\phi}_{o,p}^p= \cc_p(o)(\E_p(\phi)+\abs{\phi(o)}^p)\leq (\cc_p(o)+1)\E_p(\phi). \]
Since, clearly, $\E_p\leq \norm{\cdot}_{o,p}^p$ on $\DD_{0}^{p}$, both functionals are equivalent.

\noindent \ref{D9}$\implies$ \ref{D8}: We show the contraposition. 
If both $\E_p^{1/p}$ and $\norm{\cdot}_{o,p}$ are equivalent on $C_c(X)$ for all $o\in X$, they are also equivalent on $D_0^p$. Therefore, $\E_p^{1/p}$ is a norm on $D_0^p$. Let now $(f_n)$ be a Cauchy-sequence in $(D_0^p,\E_p^{1/p})$, i.e., $f_n\in D_0^p$ and $\E_p(f_n-f_m)\to 0$ as $n,m\to\infty$. Hence, also $\norm{f_n-f_m}_{o,p}\to 0$, and therefore $(f_n(o))_n$ is bounded. By Lemma~\ref{lem:uniform} and the connectedness of the graph, $(f_n(x))_n$ is bounded for all $x\in X$. Thus, $f_n\to f\in C(X)$ pointwise. Since $\E_p$ is lower semi-continuous and $(f_n)$ is Cauchy, we have $\E_p(f)\leq \liminf_{n\to\infty}\E_p(f_n)< \infty$. Thus, $f\in D_0^p$. Since $\E_p(f-f_n)\leq \liminf_{m\to\infty}\E_p(f_m-f_n)\to 0$ as $n\to\infty$, we conclude that $(D_0^p, \E_p^{1/p})$ is complete.

\noindent \ref{D3}$\implies$ \ref{D9}: This is clear since $\E_p^{1/p}(1)=0$, i.e., $(D_0^p,\E_p^{1/p})$ is only a semi-normed space.

\noindent \ref{D7}$\implies$ \ref{D8}: We show the contraposition and assume that $E^{1/p}$ and $\norm{\cdot}_{o,p}$ are equivalent on $C_c(X)$ and thus, also on $D_0^p$. Then, by Lemma~\ref{lem:uniform} $\operatorname{ev}_x$ is bounded on $(D_0^p, \E_p^{1/p})$.

\noindent \ref{D1}$\implies$ \ref{D7}: We show that $\operatorname{ev}_x$ is not bounded for any $x\in X$. From Lemma~\ref{parabolicity = null seq}, we get the existence of a null-sequence $(e_n)$ such that $e_n(x)=1$, and $\E_p(e_n)\to 0$. Hence,
\[\norm{\operatorname{ev}_x}=\sup_{0\neq \phi\in D_0^p}\frac{\abs{\operatorname{ev}_x(\phi)}}{\E_p^{1/p}(\phi)}\geq  \frac{1}{\E_p^{1/p}(e_n)}\to \infty.\qedhere\]
\end{proof}

A direct consequence is the following result. It connects the capacity with the Hardy inequality and a closely related inequality. For the linear case see \cite{S23} or also \cite{Fuk}. We note that this statement actually holds for general non-negative $p$-energy functionals \cite{F:Thesis}, and also $p=1$ would be possible. See also the recent results in \cite{SZ25}, and for locally finite graphs, see \cite{MY92}.

\begin{proposition}[Hardy Inequality]\label{prop:Hardy}
	The following are equivalent:
	\begin{enumerate}[label=(\roman*)]
	\item\label{H1} The graph is $p$-hyperbolic.
	\item\label{H0} There exists a (strictly) positive $\mu\in C(X)$, referred to as a Hardy weight,   such that for all $\phi\in C_c(X)$,
	\[\norm{\phi}_{p,\mu}\leq \E_p(\phi).\]
	\item\label{H2} There exists a (strictly) positive $\mu\in C(X)$ such that for all $\phi\in C_c(X)$,
	\[\norm{\phi}_{1,\mu}\leq \E_p^{1/p}(\phi).\]
	\end{enumerate}
\end{proposition}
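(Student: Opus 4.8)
The plan is to prove the cycle \ref{H1} $\Rightarrow$ \ref{H0} $\Rightarrow$ \ref{H2} $\Rightarrow$ \ref{H1}, and the single fact that drives everything is the elementary pointwise capacity bound
\[
\cc_p(x)\,\abs{\phi(x)}^p\leq \E_p(\phi),\qquad x\in X,\ \phi\in C_c(X),
\]
which I would obtain exactly as in the proof of the implication \ref{D8} $\Rightarrow$ \ref{D1} of Proposition~\ref{prop:D=D_{0}^{p}}: when $\phi(x)\neq 0$ the function $\phi/\phi(x)$ is admissible for $\cc_p(x)$, and the $p$-homogeneity $\E_p(c\phi)=\abs{c}^p\E_p(\phi)$ gives the estimate (the case $\phi(x)=0$ being trivial by $\E_p\geq 0$). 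Together with Corollary~\ref{cor:cap}, which identifies $p$-hyperbolicity with $\cc_p(x)>0$ for \emph{every} $x\in X$, this is all the input the argument uses; since it relies only on the positivity and $p$-homogeneity of $\E_p$, it also explains the remark that the conclusion survives for arbitrary non-negative $p$-energy functionals, and it works uniformly for every $p\in[1,\infty)$.

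For \ref{H1} $\Rightarrow$ \ref{H0} I would simply average these pointwise bounds. Assuming $p$-hyperbolicity, I enumerate $X=\set{x_1,x_2,\dots}$, fix strictly positive weights with $\lambda_{x_k}=2^{-k}$ so that $\sum_x\lambda_x=1$, and set $\mu(x):=\lambda_x\cc_p(x)$. This $\mu$ is strictly positive because $\cc_p(x)>0$ for all $x$, and for $\phi\in C_c(X)$ the support is finite, so multiplying the pointwise bound by $\lambda_x\geq 0$ and summing yields
\[
\sum_{x\in X}\mu(x)\abs{\phi(x)}^p=\sum_{x}\lambda_x\cc_p(x)\abs{\phi(x)}^p\leq \Big(\sum_{x}\lambda_x\Big)\E_p(\phi)=\E_p(\phi),
\]
which is precisely \ref{H0}.

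The two remaining implications are soft. For \ref{H0} $\Rightarrow$ \ref{H2} with $p>1$ I read \ref{H0} as $\norm{\phi}_{p,\mu}\leq \E_p^{1/p}(\phi)$ and produce an $\ell^1$-weight by duality: choose a strictly positive $\nu$ with $\nu\mu^{-1/p}\in\ell^{p'}(X)$ (counting measure, $p'=p/(p-1)$) and $\norm{\nu\mu^{-1/p}}_{p'}\leq 1$, e.g. $\nu(x_k)=\mu(x_k)^{1/p}2^{-k/p'}$. Hölder's inequality then gives
\[
\sum_{x}\nu(x)\abs{\phi(x)}=\sum_{x}\big(\nu(x)\mu(x)^{-1/p}\big)\big(\mu(x)^{1/p}\abs{\phi(x)}\big)\leq \norm{\nu\mu^{-1/p}}_{p'}\,\norm{\phi}_{p,\mu}\leq \E_p^{1/p}(\phi),
\]
which is \ref{H2}; for $p=1$ the two conditions coincide, so nothing is needed. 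Finally, \ref{H2} $\Rightarrow$ \ref{H1} is evaluation at a single vertex: fixing $o$, the bound $\nu(o)\abs{\phi(o)}\leq\norm{\phi}_{1,\nu}\leq\E_p^{1/p}(\phi)$ gives $\nu(o)^p\abs{\phi(o)}^p\leq\E_p(\phi)$, and taking the infimum over $\phi\in C_c(X)$ with $\phi(o)=1$ shows $\cc_p(o)\geq\nu(o)^p>0$, so the graph is $p$-hyperbolic by Corollary~\ref{cor:cap}.

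Since every step is a one-liner, I do not expect a genuine obstacle; the only points needing care are interpreting \ref{H0} in its scale-invariant form $\norm{\phi}_{p,\mu}^p\leq\E_p(\phi)$ (the literal inequality $\norm{\phi}_{p,\mu}\leq\E_p(\phi)$ cannot hold for any positive $\mu$ when $p>1$, since testing with $t\phi_0$ as $t\to 0^+$ forces the left side, of order $t$, below the right side, of order $t^p$), guaranteeing strict positivity of $\mu$ and $\nu$, and choosing the auxiliary weights summably so that the dual $\ell^{p'}$-norm stays finite.
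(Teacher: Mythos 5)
Your proof is correct, and for the implications \ref{H1} $\Rightarrow$ \ref{H0} and \ref{H0} $\Rightarrow$ \ref{H2} it coincides with the paper's argument: the same pointwise bound $\cc_p(x)\abs{\phi(x)}^p\leq \E_p(\phi)$ averaged against summable positive weights, and the same H\"older step (the paper shrinks $\mu$ to make it summable and applies H\"older directly, whereas you build an auxiliary weight $\nu$ with $\nu\mu^{-1/p}\in\ell^{p'}$; these are the same computation in different clothing). The one place you genuinely diverge is \ref{H2} $\Rightarrow$ \ref{H1}: the paper argues by contraposition, invoking Proposition~\ref{prop:D=D_{0}^{p}} to produce a null-sequence $e_n\nearrow 1$ and observing that $\norm{e_n}_{1,\mu}\to\norm{1}_{1,\mu}>0$ while $\E_p^{1/p}(e_n)\to 0$, whereas you evaluate the inequality at a single vertex and read off $\cc_p(o)\geq \nu(o)^p>0$ directly from the definition of capacity. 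Your version is more elementary (it needs nothing beyond the definition of $\cc_p$ and Corollary~\ref{cor:cap}) and makes the remark preceding the proposition --- that the statement survives for general non-negative $p$-energy functionals --- completely transparent, since no structural result about $\DD_{0}^{p}$ is used anywhere in the cycle. Your observation that \ref{H0} must be read in the $p$-homogeneous form $\norm{\phi}_{p,\mu}^p\leq\E_p(\phi)$ is also well taken: as literally written the inequality fails under the scaling $\phi\mapsto t\phi$ as $t\to 0^+$ for $p>1$, and indeed the paper's own proof establishes (and later uses) the homogeneous version.
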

\begin{proof} \ref{H1} $\implies$ \ref{H0}: For completeness, we reproduce the short argument  from \cite{F:GSR}. By definition, we have that $\cc_{p}(x)> 0$ for all $x\in X$. Moreover, $\cc_{p}(x)\cdot \mathds{1}_x$ is a possible $\mu$ since $\cc_{p}(x)\abs{\phi(x)}^{p}\leq \E_p(\phi)$ for all $\phi\in C_c(X)$. Furthermore, let $\alpha_x>0$ such that $\sum_{x\in X}\alpha_x=1$, then also $\sum_{x\in X}\alpha_x \cc_{p}(x)\cdot \mathds{1}_x$ is a possible $\mu$, and thus $\mu$ can {be} chosen to be strictly positive on $X$.

\noindent \ref{H0} $\implies$ \ref{H2}: By assumption, there exists a strictly positive Hardy weight. Moreover, every pointwise smaller but positive function serves as a Hardy weight as well. Hence, we can assume that our Hardy weight $\mu$ fulfils $\mu\in \ell^1(X,1)$, and even $\norm{\mu}_{1,1}\leq 1$. Then, since $p\in (1,\infty)$ by Hölder's inequality with exponents $p,q$ and  \ref{H0} we have
\[\norm{\phi}_{1,\mu}\leq \norm{\phi}_{p,\mu}\norm{\mu^q}_{1,1}^{1/q} \leq \E_p^{1/p}(\phi), \qquad \phi\in C_c(X).\]

\noindent \ref{H2} $\implies$ \ref{H1}: If the graph is $p$-parabolic, then there is a null-sequence $(e_n)$ of $\E_p$ going pointwise and monotonically to $1$ (Proposition~\ref{prop:D=D_{0}^{p}}). Thus, $\E_p^{1/p}(e_n)\to 0$ but $\norm{e_n}_{1,\mu}\to \norm{1}_{1,\mu}>0$ since $\mu\gneq 0$.
\end{proof}

Note that in the statement above the function space $C_c(X)$ can be enlarged to $\DD_0^p$ but not to $D^p$ (take e.g. $1\in D^p$).

\subsection{The Superharmonic Functions of Finite Energy Perspective}\label{s:super}
We show here the quasi-linear version of \cite[Theorem~6.25]{KLW21}. The  proof uses some approximation techniques which we will state next. With minor changes, these results can be proven along the lines of \cite[Lemma~6.6]{KLW21} and so we will skip the proof. Also adding a non-negative potential and including $p=1$ would be possible.

\begin{lemma}[Bounded and monotone approximation]\label{lem:approx}
	Let $u\in \DD^p$ and $(u_n)$ be a sequence in $\DD^p$ such that $\norm{u-u_n}_{o,p}\to 0$ as $n\to \infty$.
	\begin{enumerate}[label=(\alph*)]
	\item\label{lem:approxBounded} Define $v_n:= -u_- \vee u_n \wedge u_+$. Then, $\norm{u-v_n}_{o,p}\to 0$ as $n\to \infty$.
	\item\label{lem:approxMonotone} If, furthermore, $u\geq 0$ and $u_n\in C_c(X)$ for all $n\in \NN$, then  there exists a monotone increasing sequence $(\phi_n)$ in $C_c(X)$ such that $0\leq \phi_n \leq u$ and $\norm{u-\phi_n}_{o,p}\to 0$ as $n\to \infty$. On the set where $u>0$, we can choose $\phi_n$ such that $0\leq \phi_n< u$.
	\end{enumerate}
\end{lemma}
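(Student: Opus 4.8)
The plan is to reduce both parts to a single mechanism: pointwise convergence of the modified sequence together with a \emph{fixed} $\ell^p$-summable majorant for the discrete gradients, obtained by passing to a subsequence whose error norms form a summable series; dominated convergence then yields $\norm{\cdot}_{o,p}$-convergence along the subsequence, and the usual subsequence principle upgrades this to the full sequence. The only external input needed is that $\norm{\cdot}_{o,p}$-convergence forces pointwise convergence, which is immediate from \eqref{nablaestimate} in Lemma~\ref{lem:uniform}; in particular $u_n\to u$ pointwise. Note also that $\norm{u-u_n}_{o,p}\to0$ forces $\nabla u_n\to\nabla u$ in $\ell^p(X\times X,b)$, since $\E_p(u-u_n)\le\norm{u-u_n}_{o,p}^p$ controls the gradient part.

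For part \ref{lem:approxBounded}, I would first observe that $v_n\to u$ pointwise. Writing $v_n(x)=T_x(u_n(x))$, where $T_x$ is the ($1$-Lipschitz) clamping of $\RR$ onto $[-u_-(x),u_+(x)]$, one has $T_x(u(x))=u(x)$ since $u(x)$ is an endpoint of this interval, and combined with $u_n\to u$ pointwise this gives $v_n\to u$ pointwise. Next I would record $\abs{v_n}\le\abs{u}$ pointwise together with the edgewise bound $\abs{\nabla_{x,y}v_n}\le\abs{\nabla_{x,y}u_n}+\abs{\nabla_{x,y}u}$, the latter by splitting $T_x(u_n(x))-T_y(u_n(y))$ and using that $T_x$ is $1$-Lipschitz while the two clamping intervals differ by at most $\abs{\nabla_{x,y}u}$ (as $u_\pm$ are normal contractions of $u$). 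Passing to a subsequence with $\sum_j\norm{u-u_{n_j}}_{o,p}<\infty$, Minkowski's inequality in $\ell^p(b)$ shows $G:=2\abs{\nabla u}+\sum_j\abs{\nabla(u-u_{n_j})}\in\ell^p(X\times X,b)$, and $\abs{\nabla_{x,y}v_{n_j}}\le G$ for all $j$. Since $\abs{\nabla_{x,y}(u-v_{n_j})}\to0$ edgewise and is dominated by $G+\abs{\nabla u}$, while the potential terms are dominated by $c\,(2\abs{u})^p$, dominated convergence gives $\norm{u-v_{n_j}}_{o,p}\to0$; the subsequence principle then yields convergence of the whole sequence, uniformly in $p\in[1,\infty)$.

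For part \ref{lem:approxMonotone}, with $u\ge0$ we have $u_-=0$, $u_+=u$, so $g_n:=v_n=0\vee u_n\wedge u$ lies in $C_c(X)$ (its support sits inside that of $u_n$), satisfies $0\le g_n\le u$, and $\norm{u-g_n}_{o,p}\to0$ by \ref{lem:approxBounded}. I would extract a subsequence with $\sum_j\norm{u-g_{n_j}}_{o,p}<\infty$ and set $\phi_k:=\max_{j\le k}g_{n_j}$, which is increasing, lies in $C_c(X)$, and obeys $0\le\phi_k\le u$. Writing $h_{n_j}:=u-g_{n_j}\ge0$ one has $u-\phi_k=\min_{j\le k}h_{n_j}$, decreasing pointwise to $0$. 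The key gradient bound is $\abs{\nabla_{x,y}(u-\phi_k)}\le\sum_{j}\abs{\nabla_{x,y}h_{n_j}}=:G$, obtained by iterating $\abs{\nabla(f\wedge g)}\le\abs{\nabla f}+\abs{\nabla g}$; again $G\in\ell^p(b)$ by summability of the error norms. Dominated convergence (majorant $G^p$ for the gradient part and $c\,h_{n_1}^p$, summable since $h_{n_1}\in\DD^p$, for the potential part) gives $\norm{u-\phi_k}_{o,p}\to0$. Finally, replacing $\phi_k$ by $(1-2^{-k})\phi_k$ preserves membership in $C_c(X)$, monotonicity and the limit while enforcing $\phi_k<u$ on $\set{u>0}$.

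The delicate point is precisely the energy control of the monotone sequence in \ref{lem:approxMonotone}: pointwise monotone convergence $\phi_k\nearrow u$ gives no a priori upper bound on $\E_p(\phi_k)$, because membership in $\DD^p$ does not control $\sum_x\deg(x)\abs{u(x)}^p$, so the naive edge majorant $\abs{u(x)}\vee\abs{u(y)}$ need not be $b$-summable. The resolution is the passage to a subsequence with summable error norms, which makes $\sum_j\abs{\nabla(u-g_{n_j})}$ converge in $\ell^p(b)$ and dominate every $\abs{\nabla(u-\phi_k)}$. This replaces the uniform-convexity/Banach--Saks reasoning available only for $p\in(1,\infty)$ (Lemma~\ref{lem:limsup}) and makes the whole argument work uniformly for $p\in[1,\infty)$, in particular covering $p=1$.
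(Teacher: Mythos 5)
Your proof is correct, and it is worth noting that the paper itself does not write out a proof of this lemma: it defers to the $(p=2)$ argument of \cite[Lemma~6.6]{KLW21}, to be adapted ``with minor changes''. Your argument is therefore a genuinely independent, self-contained route, and its two key edgewise estimates check out: writing $v_n(x)=T_x(u_n(x))$ with $T_x$ the clamp onto $[-u_-(x),u_+(x)]$, the splitting $T_x(u_n(x))-T_y(u_n(y))=\bigl[T_x(u_n(x))-T_x(u_n(y))\bigr]+\bigl[T_x(u_n(y))-T_y(u_n(y))\bigr]$ together with $\abs{T_x(t)-T_y(t)}\leq \abs{u_+(x)-u_+(y)}\vee\abs{u_-(x)-u_-(y)}\leq\abs{\nabla_{x,y}u}$ gives $\abs{\nabla_{x,y}v_n}\leq\abs{\nabla_{x,y}u_n}+\abs{\nabla_{x,y}u}$, and the bound $\abs{\nabla_{x,y}(f\wedge g)}\leq\abs{\nabla_{x,y}f}\vee\abs{\nabla_{x,y}g}$ iterates as you claim. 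You are also right that no edgewise bound of the form $\abs{\nabla_{x,y}(u-v_n)}\leq C\abs{\nabla_{x,y}(u-u_n)}$ can hold (the clamp can be active at only one endpoint of an edge), so a majorant involving $\abs{\nabla u}$ is genuinely needed; your device of passing to a subsequence with summable error norms to manufacture a single $\ell^p(X\times X,b)$ majorant, then concluding by dominated convergence and the subsequence principle, handles this cleanly. Compared with the Dirichlet-form route the paper points to, whose natural nonlinear substitute is the uniform-convexity tool of Lemma~\ref{lem:limsup} (available only for $p\in(1,\infty)$), your argument has the concrete advantage of covering $p=1$ exactly as the lemma is stated, and it makes explicit the real difficulty in part~\ref{lem:approxMonotone}, namely that monotone pointwise convergence alone gives no energy control. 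The only steps left implicit are routine: $v_n\in\DD^p$ (immediate from your two bounds together with $\abs{v_n}\leq\abs{u}$), and the fact that replacing $\phi_k$ by $(1-2^{-k})\phi_k$ preserves $\norm{u-(1-2^{-k})\phi_k}_{o,p}\to 0$, which follows since $\norm{\phi_k}_{o,p}$ stays bounded.
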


Before stating the main result of this subsection, we need another version of Green's formula. Recall that we defined 
\[ \E_{p}(u,v):=\frac{1}{2} \sum_{x,y\in X}b(x,y)\p{\nabla_{x,y}u}\nabla_{x,y}v\]
for all functions $u, v\in C(X)$ whenever the sum converges absolutely.

{We remark that the conclusions of the next lemma hold true adding a non-negative potential and for $p=1$.} 
\begin{lemma}[Green's formula on $\DD^p$]\label{lem:GreensFormulaD}
	Let $u\in \DD^p$ and either
	\begin{enumerate}[label=(\alph*)]
			\item\label{lem:GreenD>0} $v\in \DD_{0}^{p}$ and $\Delta_pu\geq 0$ on $X$, or
			\item\label{lem:GreenDl1} $v\in \DD_0^p\cap \ell^\infty(X)$ and $\Delta_pu\in \ell^1(X,m)$.
		\end{enumerate}
 Then the  sums defining the quantities in the formula converge absolutely and
	\[\E_{p}(u,v)=\ip{\Delta_pu}{v}_{X}.\]
\end{lemma}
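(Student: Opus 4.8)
The plan is to prove the identity $\E_p(u,v)=\ip{L_pu}{v}_X$ in both cases by approximating $v$ with compactly supported functions and passing to the limit, using the Green's formula on $C_c(X)$ already established in Lemma~\ref{lem:GreensFormula} (with $V=X$, where the boundary term vanishes). The first task, before any limiting argument, is to verify absolute convergence of all the sums involved, so that the quantities $\E_p(u,v)$ and $\ip{L_pu}{v}_X$ are well-defined. For $\E_p(u,v)$ I would apply Hölder's inequality with exponents $p$ and $q=p/(p-1)$ to the edge sum: writing each summand as $b^{1/q}\p{\nabla_{x,y}u}\cdot b^{1/p}\nabla_{x,y}v$, the factor $\sum b(x,y)|\nabla_{x,y}u|^p=2\E_p(u)<\infty$ since $u\in\DD^p$, and the factor involving $v$ is controlled either by $\E_p(v)<\infty$ (case \ref{lem:GreenD>0}) or, in case \ref{lem:GreenDl1}, needs a separate argument since $v$ need only be bounded. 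Handling absolute convergence of $\ip{L_pu}{v}_X$ in case \ref{lem:GreenDl1} is immediate from $L_pu\in\ell^1(X,1)$ and $v\in\ell^\infty(X)$.

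For case \ref{lem:GreenD>0}, I would take a sequence $(\phi_n)\sse C_c(X)$ with $\norm{v-\phi_n}_{o,p}\to 0$ (possible since $v\in\DD_{0}^p$), and apply Lemma~\ref{lem:GreensFormula} to get $\E_p(u,\phi_n)=\ip{L_pu}{\phi_n}_X$ for each $n$. The left-hand side converges to $\E_p(u,v)$ by the bilinear Hölder estimate above, since $\norm{v-\phi_n}_{o,p}\to 0$ forces $\E_p(v-\phi_n)\to 0$. For the right-hand side I would use $L_pu\ge 0$ together with the assumption $v\in\DD_0^p$: by Lemma~\ref{lem:approx}\ref{lem:approxMonotone} one may arrange the approximating functions to be monotone and controlled by $v$, so that Fatou's lemma or monotone convergence applies to $\ip{L_pu}{\phi_n}_X\to\ip{L_pu}{v}_X$; the sign condition $L_pu\ge 0$ is exactly what makes the monotone-convergence argument legitimate even though $L_pu$ need not be summable.

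For case \ref{lem:GreenDl1}, the approximation is by a uniformly bounded sequence converging pointwise to $v$, for instance $\phi_n=v\cdot\mathds{1}_{X_n}$ along a finite exhaustion $(X_n)$ of $X$. Again Lemma~\ref{lem:GreensFormula} gives the identity for each $\phi_n$. The right-hand side passes to the limit by dominated convergence, using $|L_pu\cdot\phi_n|\le \norm{v}_\infty|L_pu|$ with $L_pu\in\ell^1(X,1)$ as the dominating summable majorant. The left-hand side requires controlling $\E_p(u,\phi_n)$; here the bounded (rather than finite-energy) nature of $v$ means I would expand $L_pu$ back into its defining sum and rearrange, reducing $\E_p(u,\phi_n)$ to a sum that converges to $\E_p(u,v)$ by dominated convergence against the summable majorant built from $u\in\DD^p$ and the local summability of $b$.

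\emph{Main obstacle.} The delicate point is the passage to the limit on the energy side in case \ref{lem:GreenDl1}: since $v$ is merely bounded and not of finite energy, one cannot invoke $\E_p(v-\phi_n)\to 0$, and the bilinear form $\E_p(u,v)$ must be shown to converge through a more careful dominated-convergence estimate on the double edge sum, exploiting $u\in\DD^p$ and $b$ locally summable to produce an integrable majorant. In case \ref{lem:GreenD>0} the analogous difficulty is instead the interplay between the non-summability of $L_pu$ and the limit $\ip{L_pu}{\phi_n}_X\to\ip{L_pu}{v}_X$, which is precisely why the hypothesis $L_pu\ge 0$ (allowing monotone convergence) is needed there.
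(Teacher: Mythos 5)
Your proposal follows essentially the same route as the paper: approximate $v$ by compactly supported functions, invoke Lemma~\ref{lem:GreensFormula} (where the boundary term is absent), and pass to the limit by monotone convergence in case \ref{lem:GreenD>0} (using $L_pu\geq 0$) and by dominated convergence against $\norm{v}_\infty\abs{L_pu}$ in case \ref{lem:GreenDl1}. The one point to make precise in case \ref{lem:GreenD>0} is that Lemma~\ref{lem:approx}~\ref{lem:approxMonotone} and the monotone convergence theorem both require non-negativity, so one should first split $v=v_+-v_-$ (both parts lie in $\DD_{0}^{p}$ since $\E_p$ is Markovian) and treat each part separately, exactly as the paper does; with that adjustment your plan coincides with the paper's proof, and the difficulty you flag on the energy side of case \ref{lem:GreenDl1} is handled in the paper by the same dominated-convergence idea you describe.
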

\begin{proof}
{The proof follows the lines of the $(p=2)$-case in \cite[Lemma~6.8]{KLW21}. Since the arguments are quite short, we reproduce them here. }

If $v\in C_c(X)$ then we can simply apply Lemma~\ref{lem:GreensFormula} and get the result.

\noindent \ref{lem:GreenD>0}: Since $v\in \DD_{0}^{p}$ which is invariant under normal contractions,  $v_+, v_-\in \DD_{0}^{p}$. By Lemma~\ref{lem:approx}~\ref{lem:approxMonotone}, $v_{\pm}$ can be approximated monotonically by functions in $C_c(X)$. Hence, by monotone convergence we get \[\E_p(u,v_{\pm})=\ip{\Delta_pu}{v_{\pm}}_X.\]
Using the linearity of the difference operator, we obtain the desired formula.

 \noindent\ref{lem:GreenDl1}: By Lemma \ref{lem:approx}~\ref{lem:approxBounded}, we can approximate $v$ with functions with smaller absolute value. Using $\Delta_pu\in \ell^1(X,m)$ and dominated convergence yield the result.
\end{proof}

Now we can show the main result of this subsection, which can be seen as a generalised Liouville property. 
\begin{proposition}[Bounded superharmonic functions are constant]\label{prop:S}
		The following are equivalent:
		\begin{enumerate}[label=(\roman*)]
			\item\label{S1} The graph $G=(X,b,m)$ is $p$-parabolic.
			\item\label{S2} All non-negative superharmonic functions are constant.
			\item\label{S3} All superharmonic functions in $\DD^p$ are constant.
			\item\label{S4} All superharmonic functions in $\DD_{0}^{p}$ are constant.
			\item\label{S5} All bounded superharmonic functions are constant.
			\item\label{S7sup} All superharmonic functions which are bounded from below are constant.
			\item\label{S6} There exists a non-zero harmonic function in $\DD_{0}^{p}$.
		\end{enumerate}
\end{proposition}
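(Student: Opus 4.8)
The plan is to take Theorem~\ref{thm:criticalNew} as the anchor. Since $c=0$, being superharmonic means exactly $\Delta_p u\ge 0$, so the equivalence \ref{S1}~$\iff$~\ref{S2} is nothing but \ref{thm:critical1New}~$\iff$~\ref{thm:Liouville p-superharmonic} proved there. Everything else I would arrange as a short list of implications which, together with \ref{S1}~$\iff$~\ref{S2}, closes all loops: namely $\ref{S2}\Rightarrow\ref{S7sup}\Rightarrow\ref{S5}\Rightarrow\ref{S1}$, then $\ref{S1}\Rightarrow\ref{S3}\Rightarrow\ref{S4}\Rightarrow\ref{S1}$, and finally $\ref{S1}\Rightarrow\ref{S6}\Rightarrow\ref{S1}$. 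The two trivial inclusions \ref{S3}~$\Rightarrow$~\ref{S4} (since $\DD_{0}^{p}\sse\DD^p$) and \ref{S7sup}~$\Rightarrow$~\ref{S5} (bounded $\Rightarrow$ bounded below) need no comment. For \ref{S2}~$\Rightarrow$~\ref{S7sup}, given a superharmonic $u$ with $\gamma:=\inf u>-\infty$, the shift $u-\gamma$ lies in $\FF^p$, is non-negative, and satisfies $\Delta_p(u-\gamma)=\Delta_p u\ge 0$ because $\nabla_{x,y}$ annihilates constants; hence $u-\gamma$, and so $u$, is constant by~\ref{S2}.

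For the two contrapositive closings \ref{S5}~$\Rightarrow$~\ref{S1} and \ref{S4}~$\Rightarrow$~\ref{S1}, I would exhibit a single witness in the hyperbolic case. Assuming $G$ is $p$-hyperbolic, Corollary~\ref{cor:cap} gives $\cc_p(o)>0$, and Proposition~\ref{existence potential} produces the $p$-harmonic potential $u$ at $o$, which by \ref{existence potential ii} and \ref{existence potential iii} is non-constant, satisfies $0<u\le 1$ and $u\in\DD_{0}^{p}$, is $p$-harmonic on $X\setminus\{o\}$ and has $\Delta_p u(o)>0$. Thus $\Delta_p u\ge 0$ on all of $X$, i.e.\ $u$ is a non-constant \emph{bounded} superharmonic function that moreover lies in $\DD_{0}^{p}$; this one $u$ simultaneously contradicts \ref{S5} and \ref{S4}. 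The equivalence \ref{S1}~$\iff$~\ref{S6} is cleanest: if $G$ is $p$-parabolic then $1\in\DD_{0}^{p}$ by Proposition~\ref{prop:D=D_{0}^{p}}\,\ref{D3}, and $1$ is a non-zero harmonic function, giving \ref{S6}; conversely, a non-zero harmonic $u\in\DD_{0}^{p}$ fed into Green's formula (Lemma~\ref{lem:GreensFormulaD}\,\ref{lem:GreenD>0} with $v=u$) yields $\E_p(u)=\E_{p}(u,u)=\ip{L_pu}{u}_{X}=0$, so $u$ is a non-zero constant, whence $1\in\DD_{0}^{p}$ and $G$ is $p$-parabolic by Proposition~\ref{prop:D=D_{0}^{p}}.

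The one implication with genuine content is \ref{S1}~$\Rightarrow$~\ref{S3}, where I expect the main work. Let $u\in\DD^p$ be superharmonic. Parabolicity gives $\DD^p=\DD_{0}^{p}$ and $1\in\DD_{0}^{p}$ (Proposition~\ref{prop:D=D_{0}^{p}}), so Green's formula applies twice. First pair $u$ with the admissible $v=1$: since $\nabla_{x,y}1=0$ the left side $\E_{p}(u,1)$ vanishes, so Lemma~\ref{lem:GreensFormulaD}\,\ref{lem:GreenD>0} gives $\sum_{x}\Delta_p u(x)\,m(x)=\ip{L_pu}{1}_{X}=0$; as $\Delta_p u\ge 0$ pointwise and $m>0$, this forces $\Delta_p u\equiv 0$, i.e.\ $u$ is harmonic. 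Then pair the now-harmonic $u$ with $v=u\in\DD_{0}^{p}$ to obtain $\E_p(u)=\E_{p}(u,u)=\ip{L_pu}{u}_{X}=0$, so $u$ is constant by connectedness. The main obstacle is not conceptual but the careful verification of the hypotheses of Lemma~\ref{lem:GreensFormulaD}\,\ref{lem:GreenD>0} (that $u\in\DD^p$, $v\in\DD_{0}^{p}$ and $L_pu\ge 0$) at each use, together with the observation that the single potential from Proposition~\ref{existence potential} already serves as the bounded \emph{and} finite-energy non-constant superharmonic witness needed to close the hyperbolic side.
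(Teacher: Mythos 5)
Your proof is correct, and its skeleton coincides with the paper's: anchor on Theorem~\ref{thm:criticalNew} for \ref{S1}~$\iff$~\ref{S2}, use the $p$-harmonic potential of Proposition~\ref{existence potential} as the non-constant witness on the hyperbolic side, and drive the finite-energy implications through Proposition~\ref{prop:D=D_{0}^{p}} and Green's formula (Lemma~\ref{lem:GreensFormulaD}). Two local choices differ, both legitimately. First, the paper routes the boundedness conditions as \ref{S2}~$\Rightarrow$~\ref{S5}~$\Rightarrow$~\ref{S7sup}~$\Rightarrow$~\ref{S2}, where \ref{S5}~$\Rightarrow$~\ref{S7sup} requires the observation that $\Delta_p(u_1\wedge u_2)\geq \Delta_p u_1\wedge \Delta_p u_2$ so that the truncations $u\wedge k$ stay superharmonic; your direct \ref{S2}~$\Rightarrow$~\ref{S7sup} via the shift $u-\inf u$ makes that truncation lemma unnecessary, at the price of needing the potential a second time to close \ref{S5}~$\Rightarrow$~\ref{S1} (the paper reuses it only for \ref{S4}~$\Rightarrow$~\ref{S1}). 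Second, in \ref{S1}~$\Rightarrow$~\ref{S3} the paper, after showing $u$ is harmonic, passes to $v=u-u(o)$, invokes the concavity of $-\abs{\cdot}$ to get that $-\abs{v}$ is superharmonic (citing an external result), and concludes with the strong minimum principle; you instead apply Lemma~\ref{lem:GreensFormulaD} a second time with $v=u\in\DD_{0}^{p}$ to get $\E_p(u)=\ip{L_pu}{u}_X=0$ and conclude by connectedness. Your variant is arguably cleaner, self-contained within the paper's lemmas, and is exactly the computation the paper itself performs in \ref{S6}~$\Rightarrow$~\ref{S1}; the paper's variant has the mild advantage of not requiring $u\in\DD_{0}^{p}$ at that stage. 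All hypotheses of the Green's formula are verified at each use, so there is no gap.
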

\begin{proof}
\noindent\ref{S1} $\Leftrightarrow$ \ref{S2}: This is Theorem \ref{thm:criticalNew}.

\noindent\ref{S1} $\implies$ \ref{S3}: Let $u\in \DD^p$ be superharmonic. By Proposition~\ref{prop:D=D_{0}^{p}}, we have that $1\in \DD_{0}^{p}=\DD^p$. Thus using the Green's formula, Lemma~\ref{lem:GreensFormulaD}, we conclude that $0=\E_p(u,1)=\ip{\Delta_pu}{1}_X$, i.e., $u$ is harmonic.

Fix $o\in X$, we define the auxiliary function $v:=u-u(o)\cdot 1$. The goal is now to show that $v=0$. It is obvious that $v\in \DD^p$ is harmonic. Since $-\abs{\cdot}$ is concave, $-\abs{v}\in \DD^p$ is superharmonic (see \cite[Proposition~4.2]{F:Opti}) and hence, also harmonic. Thus, $\abs{v}$ is harmonic and nonnegative. Since $v(o)=0$, we conclude that $|v|=0$ by the strong minimum principle (Lemma~\ref{max_principle}), as required to show that $u$ is constant.

\noindent\ref{S3} $\implies$ \ref{S4}: This is trivial.

\noindent \ref{S4} $\implies$ \ref{S1}: By Proposition~\ref{existence potential}~\ref{existence potential ii}, there exists a positive superharmonic function $u\in \DD_{0}^{p}$ with $u(o)=1$ and $\E_p(u)=\cc_p(o)$. By \ref{S4}, $u=1\in D_{0}^{p}$, and by Proposition~\ref{prop:D=D_{0}^{p}}, we get \ref{S1}.

\noindent \ref{S2} $\implies$ \ref{S5}: Let $u\in \ell^\infty(X)$ be superharmonic. Then,  $u- \inf_{X}u \cdot 1$ is also a non-negative superharmonic function, and thus constant. Hence, $u$ is constant.

\noindent\ref{S5} $\implies$ \ref{S7sup}: {Note that if $u_i\in \FF^p(X)$, $i=1,2$, then $u_1\wedge u_2\in \FF^p(X)$ and it follows by a case  {by case} analysis that $\Delta_p (u_1\wedge u_2)\geq \Delta_p u_1\wedge \Delta_p u_2$. Thus, if $u$ is superharmonic and bounded from below, then, for every $k\in \NN$, $ u\wedge k$ is superharmonic and bounded. Hence it is constant.  It follows that  $u= \lim_k u\wedge k$ is constant. }

\noindent\ref{S7sup} $\implies$ \ref{S2}: This is trivial.

\noindent\ref{S6} $\implies$ \ref{S1}: Let $u\in \DD_{0}^{p}$ be harmonic and non-zero. By the Green's formula on $\DD^p$, Lemma~\ref{lem:GreensFormulaD} (a), we have $\E_p(u)=\ip{\Delta_pu}{u}_X=0$ and it follows that $u$ is a non-zero constant. Thus $1\in \DD_{0}^{p}$ and \ref{S1} follows from
Proposition \ref{prop:D=D_{0}^{p}}.

\noindent \ref{S1} $\implies$ \ref{S6}: If the graph is $p$-parabolic, again by Proposition \ref{prop:D=D_{0}^{p}}, the function $1$ is a non-zero harmonic function in $\DD_{0}^{p}=\DD^p$.
\end{proof}
Clearly, all statements about superharmonic functions can be equivalently reformulated for subharmonic functions.

\subsection{Kelvin-Nevanlinna-Royden-type characterization}\label{sec:KNR}

Before we state the main result of this subsection, we need to introduce flows on graphs.

Let $V\sse X$. A function $F\colon V\times V \to \RR$ is called \emph{flow} on $V$ if it is skew-symmetric, i.e.,
\[ F(x,y)=-F(y,x), \qquad x,y\in V.\]
The \emph{divergence} of a flow $F$ on $X$ is then defined by
\[ \Div_b F(x):=\frac{1}{m(x)} \sum_{y\in X}b(x,y)F(x,y), \qquad x\in X,\]
provided the sum on the right hand side converges absolutely. An easy application of H\"older's inequality shows that this is so whenever
$F\in \ell^q(X\times X, b)$ for some $q\in [1,\infty]$.

For flows, we have the following version of Green's formula.

\begin{lemma}[Green's formula for flows]\label{lem:GreenFlow}
	Let $F\in \ell^q(X\times X, b)$ be a flow for some $q\in [1,\infty]$. Assume either
	\begin{enumerate}[label=(\alph*)]
		\item\label{lem:GreenFlow0} $g \in C_c(X)$, or
		\item\label{lem:GreenFlow1} $g\in \DD_0^p\cap \ell^\infty(X)$ and $\Div_bF\in \ell^1(X,m)$.
	\end{enumerate}
	{Then the  sums defining the quantities in the equation below} converge absolutely and
	\[\frac{1}{2}\sum_{x,y\in X}b(x,y)F(x,y)(\nabla_{x,y}g)= \sum_{x\in X}\Div_bF(x)g(x)m(x).\]
\end{lemma}
\begin{proof}
\ref{lem:GreenFlow0}: Since $g$ is finitely supported the above sums are finite.  {Since}  $F$ is a flow we compute
	\begin{align*}
	\sum_{x,y\in X}b(x,y)F(x,y)(\nabla_{x,y}g) &= \sum_{x,y \in X} b(x,y)F(x,y)g(x) + \sum_{x,y\in X}b(x,y)F(y,x)g(y) \\
	&=2 \cdot \sum_{x\in X}\Div_bF(x)g(x)m(x).
	\end{align*}

\ref{lem:GreenFlow1}: Let $(g_n)$ be a sequence in $C_c(X)$ such that $g_n\to g\in \DD_0^p\cap \ell^\infty(X)$ and $\abs{g_n(x)}\leq \abs{g(x)}$ for all $x\in X$. By \ref{lem:GreenFlow0} we get
		\begin{align*}
	\frac{1}{2}\sum_{x,y\in X}b(x,y)F(x,y)(\nabla_{x,y}g_n)= \sum_{x\in X}\Div_bF(x)g_n(x)m(x).
\end{align*}
Since $\Div_bF\in \ell^1(X,m)$ and $\abs{g_n} \nearrow \abs{g}$, we can use dominated  convergence on the right-hand side. Hence, also the left-hand side converges.
\end{proof}
	
	The following is a generalisation of \cite{Prado} for the standard Laplacian on locally finite graphs to weighted Laplacians on locally summable graphs. We also note that the proofs differ. Moreover, on manifolds such a result is well-known, see e.g. \cite{PS14}. For $p>1$ such a results was first proven on manifolds in \cite{GT99} (see also \cite{GM}), and for $p=2$, it goes back to \cite{LS84}. For a version of this result in the linear case on graphs, see \cite[Section~6.5]{KLW21} and also \cite{HKLMS}. Note that also $p=1$ would be possible.
	
\begin{theorem}[Kelvin-Nevanlinna-Royden characterization]\label{KNR}
		The following are equivalent:
		\begin{enumerate}[label=(\roman*)]
			\item\label{prop:KNR1} The graph $G=(X,b,m)$ is $p$-parabolic.
			\item\label{prop:KNR2} For all flows $F\in \ell^q(X\times X, b)$, $1/q+1/p=1$, and all (some) measures $m$ such that $(\Div_bF)_+\in \ell^1(X,m)$ or $(\Div_bF)_-\in \ell^1(X,m)$, we have
			\[ \sum_{x\in X}\Div_b F(x)m(x)=0.\]
			\item\label{prop:KNR3} 
			For all flows $F\in \ell^q(X\times X, b)$, $1/q+1/p=1$, and all functions $g\in D^p\cap \ell^\infty(X)$ and all (some) measures $m$ such that $\Div_bF\in \ell^1(X,m)$, we have
			\[ \sum_{x\in X}\Div_b F(x)g(x)m(x)=\frac{1}{2} \sum_{x,y\in X}b(x,y)F(x,y)\nabla_{x,y}g.\]
		\end{enumerate}
\end{theorem}
\begin{proof}
	\ref{prop:KNR1} $\implies$ \ref{prop:KNR2}:  
    Since the graph is $p$-parabolic, by Proposition~\ref{prop:D=D_{0}^{p}}, there exists a sequence $(\phi_n)$ in $C_c(X)$ such that $0\leq \phi_n \leq 1$, and $\phi_n\to 1$ as $n\to \infty$ and $\E_p(\phi_n)\to 0$.
	
Then, by Lemma~\ref{lem:GreenFlow}~\ref{lem:GreenFlow0}, 
		\begin{multline*}
	\frac{1}{2}\sum_{x,y\in X}b(x,y)F(x,y)(\nabla_{x,y}\phi_n)= \sum_{x\in X}\Div_bF(x)\phi_n(x)m(x)\\
	= \sum_{x\in X}(\Div_bF(x))_+\phi_n(x)m(x)- \sum_{x\in X}(\Div_bF(x))_-\phi_n(x)m(x).
	\end{multline*}
	Now, as $n\to \infty$, the left-hand side vanishes since 
	\[\norm{F\nabla \phi_n}_{1,b}\leq  \norm{F}_{q,b} \norm{\nabla \phi_n }_{p,b}\to 0. \]
	
	Since $(\Div_bF)_+\in \ell^1(X,m)$ or $(\Div_bF)_-\in \ell^1(X,m)$ and $\phi_n \nearrow 1$, we can use dominated and monotone convergence on the right-hand side. Thus, using $\phi_n \nearrow 1$, we get
	\[\sum_{x\in X}\Div_bF(x)m(x)= \lim_{n\to \infty}\frac{1}{2}\sum_{x,y\in X}b(x,y)F(x,y)(\nabla_{x,y}\phi_n)=0.\]

\noindent \ref{prop:KNR1} $\implies$ \ref{prop:KNR3}: By \ref{prop:KNR1}, we can apply again Proposition~\ref{prop:D=D_{0}^{p}} and get $D^p=D_0^p$. Then, applying Lemma~\ref{lem:GreenFlow}~\ref{lem:GreenFlow0} to $g\in D^p\cap \ell^\infty(X)$, the result follows.
	
\noindent \ref{prop:KNR2}, \ref{prop:KNR3} $\implies$ \ref{prop:KNR1}: Assume that the graph is $p$-hyperbolic, fix $o\in X$, and set $\Delta:=\Delta_{p,b,m}$ for some measure $m$ with $m(o)>0$. Let $u\in D_{0}^{p}$ be the $p$-harmonic potential at $o$ constructed in Proposition~\ref{existence potential}, so that $\Delta u (x) =\mathds{1}_o(x)$ on $X$ and $\E_p(u)=\cc_p(o)$.
We set $F(x,y):=\p{\nabla_{x,y}u}$ and $g=\mathds{1}_X\in D^p\cap\ell^\infty(X)$. Then $F\in \ell^q(X\times X, b)$, and $\Div_b F= \Delta u$. Thus,
\[\sum_{x\in X}\Div_bF(x)m(x)= \sum_{x\in X}\Delta u(x)m(x)=m(o)>0.\qedhere\]
\end{proof}

\subsection{Poincaré-type characterization}\label{s:Poincare}

This is another generalization of a result in \cite{Prado}. Here, the proof would also go through for arbitrary potentials as long as the functional is non-negative and also $p=1$ is possible. The  inequality  {\color{blue} in \eqref{prop:PI2}} is sometimes called Friedrichs or Poincaré inequality. It is related to Proposition~\ref{prop:D=D_{0}^{p}}~\ref{D8}.

\begin{proposition}\label{Poincaré-type characterization}[Poincaré-type characterization]		
		The following are equivalent:
		\begin{enumerate}[label=(\roman*)]
			\item\label{prop:PI1} The graph $G=(X,b,m)$ is $p$-hyperbolic.
			\item\label{prop:PI2} For all finite $K\sse X$ there is a constant $C(K,p)$ such that
			\[\sum_{x\in K}\abs{\phi(x)}^p\leq C(K,p)\, \E_p(\phi), \qquad \phi\in C_c(X).\]
			\item\label{prop:PI3} For all  $o\in X$ there is a constant $C(o,p)$ such that
			\[\abs{\phi(o)}^p\leq C(o,p)\, \E_p(\phi), \qquad \phi\in C_c(X).\]
		\end{enumerate}
\end{proposition}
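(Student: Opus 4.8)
The plan is to reduce everything to a single scaling observation that ties the positivity of the one-point capacity $\cc_p(o)$ to the Poincar\'e inequality \ref{prop:PI3} at $o$, and then to move that inequality from one vertex to all vertices along paths. The key point is that $\E_p$ is $p$-homogeneous, $\E_p(\lambda\phi)=|\lambda|^p\,\E_p(\phi)$. Hence, given $\phi\in C_c(X)$ with $\phi(o)\ne 0$, the rescaled function $\psi:=\phi/\phi(o)$ satisfies $\psi(o)=1$ and is admissible for $\cc_p(o)$, so that
\[
\cc_p(o)\le \E_p(\psi)=\frac{\E_p(\phi)}{|\phi(o)|^p}.
\]
This already shows that $\cc_p(o)>0$ forces $|\phi(o)|^p\le \cc_p(o)^{-1}\E_p(\phi)$ for all $\phi\in C_c(X)$ (the case $\phi(o)=0$ being trivial). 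Conversely, if $|\phi(o)|^p\le C(o,p)\E_p(\phi)$ holds for all $\phi\in C_c(X)$, then every competitor $\phi$ with $\phi(o)\ge1$ obeys $1\le|\phi(o)|^p\le C(o,p)\E_p(\phi)$, whence $\E_p(\phi)\ge C(o,p)^{-1}$ and, taking the infimum, $\cc_p(o)\ge C(o,p)^{-1}>0$. Thus \ref{prop:PI3} at a fixed vertex $o$ is \emph{equivalent} to $\cc_p(o)>0$.

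For \ref{prop:PI1}$\iff$\ref{prop:PI3} I would argue as follows. If \ref{prop:PI3} holds, apply it at a single vertex $o$ and use the converse scaling bound above to get $\cc_p(o)>0$; by definition the graph is then $p$-hyperbolic. For the reverse implication, $p$-hyperbolicity provides a finite set $K$ with $\cc_p(K)>0$, and the subadditivity of the $p$-capacity, $\cc_p(K)\le\sum_{x\in K}\cc_p(x)$, forces $\cc_p(x_0)>0$ for some $x_0\in K$; the scaling bound then yields \ref{prop:PI3} at $x_0$. To pass from $x_0$ to an arbitrary $o\in X$ I would propagate along a path exactly as in the proof of Lemma~\ref{lem:uniform}. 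Indeed, \eqref{nablaestimate} gives $|\nabla_{x_0,o}\phi|^p\le C_p(x_0,o)\E_p(\phi)$, and combining this with the $p$-triangle inequality
\[
|\phi(o)|^p\le 2^{p-1}\bigl(|\phi(x_0)|^p+|\nabla_{x_0,o}\phi|^p\bigr)\le 2^{p-1}\bigl(C(x_0,p)+C_p(x_0,o)\bigr)\E_p(\phi)
\]
establishes \ref{prop:PI3} at $o$ with $C(o,p)=2^{p-1}\bigl(C(x_0,p)+C_p(x_0,o)\bigr)$.

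The equivalence \ref{prop:PI2}$\iff$\ref{prop:PI3} is then immediate: choosing $K=\{o\}$ in \ref{prop:PI2} gives \ref{prop:PI3}, while summing \ref{prop:PI3} over the finitely many vertices of a finite set $K$ gives \ref{prop:PI2} with $C(K,p)=\sum_{x\in K}C(x,p)$. I do not expect a genuine obstacle here; the only delicate point is to make sure every step survives for general $c\ge 0$ and down to $p=1$. This is the case: the scaling argument uses only the homogeneity of $\E_p$, subadditivity of capacity holds for all $p\in[1,\infty)$, and \eqref{nablaestimate} is proved in Lemma~\ref{lem:uniform} for every $p\in[1,\infty)$ with $c\ge 0$ (the potential term only helps in the estimate). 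Crucially, uniform convexity and reflexivity, which break down at $p=1$, are never invoked.
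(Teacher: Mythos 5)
Your proof is correct. It reaches the same core mechanism as the paper — the $p$-homogeneity of $\E_p$, which identifies the validity of \ref{prop:PI3} at a vertex $o$ with the positivity of $\cc_p(o)$ — but it packages the argument differently. The paper argues by contraposition through the null-sequence characterization (Lemma~\ref{parabolicity = null seq}): if \ref{prop:PI3} fails at some $o$, normalizing the failing test functions produces a null-sequence, hence $p$-parabolicity; conversely a null-sequence at $o$ immediately kills \ref{prop:PI3}. You instead work directly with capacities: the scaling identity $\cc_p(o)\,|\phi(o)|^p\le\E_p(\phi)$ and its converse show that \ref{prop:PI3} at $o$ is exactly $\cc_p(o)>0$, and you then pass from ``some finite set has positive capacity'' to ``some singleton has positive capacity'' via subadditivity, and from one vertex to all vertices via the path estimate \eqref{nablaestimate}. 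This path propagation is precisely the ingredient hidden inside the proof of Lemma~\ref{parabolicity = null seq} that the paper invokes, so the two arguments are close relatives; what your version buys is the explicit optimal constant $C(o,p)=\cc_p(o)^{-1}$ and a proof that visibly uses nothing beyond the capacity definition, subadditivity and the path estimate — all of which are available for $c\ge 0$ and $p=1$, whereas the paper's citation of Theorem~\ref{thm:criticalNew} (stated for $c=0$, $p>1$) is slightly looser than the hypotheses of the proposition and really should point to Lemma~\ref{parabolicity = null seq}. The only cosmetic caveat is that the constant $C_p(x_0,o)$ in \eqref{nablaestimate} is defined via a H\"older exponent that degenerates at $p=1$; for $p=1$ one substitutes the $\max_i b^{-1}(x_i,x_{i+1})$ bound from the second half of the proof of Lemma~\ref{lem:uniform}, which gives the same conclusion.
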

\begin{proof}
\ref{prop:PI2} $\iff$ \ref{prop:PI3}: Clearly, if the Poincaré-type inequality holds for all finite subsets, then it also holds for all singletons. On the other side, if it holds for all singletons, then $C(K,p):=\sum_{x\in K}C(x,p)$ is a possible constant for a finite set $K\sse X$.

\noindent \ref{prop:PI3} $\implies$ \ref{prop:PI1}: Assume  that the graph is $p$-parabolic. By Theorem~\ref{thm:criticalNew}, this is equivalent to the existence of a null-sequence $(\phi_n)$, i.e. a sequence in $C_c(X)$ such that $\phi_n(o)=1$ for some fixed vertex $o\in X$, and $\E_p(\phi_n)\to 0$ as $n\to \infty$ and clearly \ref{prop:PI3} cannot hold.

\noindent \ref{prop:PI1} $\implies$ \ref{prop:PI3}: Assume  that \ref{prop:PI3} does not hold, i.e., there exists a vertex $o\in X$ such that for all $k\in \NN$ we find $\phi_k\in C_c(X)$ with
\[\abs{\phi_k(o)}^p> k \, \E_p(\phi_k).\]
Since $\E_p$ is $p$-homogeneous we can assume without loss of generality that $\abs{\phi_k(o)}=1$. Hence, $(\phi_k)$ is a null-sequence for $\E_p$, and therefore $\E_p$ is $p$-parabolic.
\end{proof}

\subsection{Ahlfors-type characterization}\label{sec:A}

Here, we show a maximum principle-type characterization of  $p$-parabolicity which goes back to Ahlfors \cite{Ahlfors}, where a corresponding version was proven for Riemann surfaces and $p=2$. For $p\neq 2$ and Riemannian manifolds, the corresponding result can be found in \cite{PS14, PST14}. For historical reasons, we state the characterization in terms of subharmonic functions.

Before stating our result, we need a lemma which is also used in Subsection~\ref{sec:WMP}. It is the discrete version of the result stating that the pointwise maximum of subsolutions is a subsolution. The proof, which we reproduce for completeness, follows from direct computations. See also \cite[Lemma~4.4]{F:Opti} for a version valid for an arbitrary family of subsolutions. We mention that an arbitrary potential could be added, and also $p=1$ could be included.

\begin{lemma}[$\vee$-stable]\label{lem:vee}
	Let  $V\sse X$. Let $u_i \in \FF^p(V)$, $i=1,2$ 
satisfy $\Delta_pu_i (x)\leq f(x) \p{u_i(x)} $ on $V$ for some  function $f\in C(X)$. Then, the  pointwise maximum $w:=u_1\vee u_2$ is also in $\FF^p(V)$ and satisfies $\Delta_pw(x)\leq f(x) \p{w(x)}$ on $V$.
\end{lemma}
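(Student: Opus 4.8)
The plan is to establish the statement by first checking membership $w\in\FF^p(V)$ and then performing a pointwise case analysis at each vertex, reducing the inequality for $w$ to the one for whichever $u_i$ attains the maximum there. The whole argument rests on a single structural fact: the nonlinearity $t\mapsto \p{t}=\abs{t}^{p-2}t$ is nondecreasing on $\RR$ (strictly increasing for $p>1$, and equal to $\sgn$ for $p=1$).

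First I would verify $w\in\FF^p(V)$. Using the elementary inequality $\abs{\max(a,b)-\max(a',b')}\le \abs{a-a'}\vee\abs{b-b'}$ applied with $a=u_1(x)$, $b=u_2(x)$, $a'=u_1(y)$, $b'=u_2(y)$, I obtain
\[
\abs{\nabla_{x,y}w}\le \abs{\nabla_{x,y}u_1}\vee\abs{\nabla_{x,y}u_2},
\]
and since $t\mapsto t^{p-1}$ is nondecreasing on $[0,\infty)$ for $p\ge 1$, this gives $\abs{\nabla_{x,y}w}^{p-1}\le \abs{\nabla_{x,y}u_1}^{p-1}+\abs{\nabla_{x,y}u_2}^{p-1}$. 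Summing against $b(x,\cdot)$ and invoking $u_1,u_2\in\FF^p(V)$ shows that $\sum_{y}b(x,y)\abs{\nabla_{x,y}w}^{p-1}<\infty$ for every $x\in V$, so indeed $w\in\FF^p(V)$.

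Next I would fix $x\in V$; by symmetry I may assume $w(x)=u_1(x)\ge u_2(x)$, so that in particular $\p{w(x)}=\p{u_1(x)}$, and the potential term of $L_p$ at $x$ is the same for $w$ and for $u_1$. The key observation is that $w\ge u_1$ everywhere while $w(x)=u_1(x)$, hence for every $y$
\[
\nabla_{x,y}w=u_1(x)-w(y)\le u_1(x)-u_1(y)=\nabla_{x,y}u_1.
\]
Applying the monotone map $\p{\cdot}$ yields $\p{\nabla_{x,y}w}\le\p{\nabla_{x,y}u_1}$; multiplying by $b(x,y)\ge 0$, summing over $y$ and dividing by $m(x)>0$ gives $\Delta_pw(x)\le\Delta_pu_1(x)$. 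Adding the common potential term to both sides,
\begin{align*}
L_pw(x)&=\Delta_pw(x)+\frac{c(x)}{m(x)}\p{u_1(x)}\\
&\le\Delta_pu_1(x)+\frac{c(x)}{m(x)}\p{u_1(x)}=L_pu_1(x)\le f(x)\p{u_1(x)}=f(x)\p{w(x)},
\end{align*}
which is exactly the desired subsolution inequality at $x$. Since $x\in V$ was arbitrary, the proof is complete.

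I do not anticipate a genuine obstacle: the argument is purely local and elementary. The only point demanding care is the interplay between the pointwise domination $w\ge u_i$ and the monotonicity of $\p{\cdot}$, which is precisely what turns a pointwise maximum into a subsolution; the membership check and the symmetric case $w(x)=u_2(x)$ are routine.
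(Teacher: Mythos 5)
Your proof is correct and follows essentially the same route as the paper's: the membership check via a pointwise bound on $\abs{\nabla_{x,y}w}$ (the paper instead bounds $\abs{w}\le\abs{u_1}+\abs{u_2}$ and uses the equivalent summability criterion, a trivial variation), and then the same key step $\nabla_{x,y}w\le\nabla_{x,y}u_1$ at a point where $w(x)=u_1(x)$, combined with the monotonicity of $\p{\cdot}$. Nothing is missing.
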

\begin{proof} Since $|u_1\vee u_2|\leq |u_1|+|u_2|$, it follows from the $p$-triangle inequality that $w\in \FF^p(V)$.
	Fix $x\in V$. Without loss of generality, we may assume that $u_1(x)\geq u_2(x)$. Then,
	\[\nabla_{x,y}w = u_1(x)-w(y)\leq \nabla_{x,y}u_1.\]
	Since $\p{\cdot}\colon \RR \to \RR$ is monotone increasing, we get
	\[\Delta_{p}w(x)\leq \Delta_{p}u_1(x)\leq f(x)\p{u_1(x)} =f(x)\p{w(x)},\]
	i.e., $\Delta_pw(x) \leq f(x) \p{w(x)}$ on $V$.
\end{proof}

\begin{proposition}[Ahlfors-type characterization]	\label{Ahlfors}	
		The following are equivalent:
		\begin{enumerate}[label=(\roman*)]
			\item\label{A1} The graph $G=(X,b,m)$ is $p$-parabolic.
			\item\label{A2} For all proper subsets $V\subsetneq X$ (i.e., $\partial_e V \neq \emptyset$), and all functions $u$ which are  bounded   on $\overline{V}=V\cup \partial_e V$ and $p$-subharmonic  on $V$, we have
			\[\sup_{\overline{V}} u = \sup_{ \partial_e V}u.\]
		\end{enumerate}
\end{proposition}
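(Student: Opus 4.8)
The plan is to derive both implications from characterizations already established, avoiding any fresh variational work. For \ref{A1} $\Rightarrow$ \ref{A2} I would reduce the maximum principle to the Liouville property of Proposition~\ref{prop:S}. Since $\partial_e V\sse\overline V$, the inequality $\sup_{\overline V}u\ge\sup_{\partial_e V}u$ is automatic, so writing $M:=\sup_{\partial_e V}u$ (finite, because $u$ is bounded on $\overline V$) it suffices to prove $u\le M$ on $V$. To this end I would introduce $v:=(u-M)_+$ on $\overline V$, extended by $0$ to all of $X$, and aim to show $v\equiv 0$; together with the automatic inequality this yields the claimed equality.

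The crux is to verify that this zero-extension is a bounded subharmonic function on the entire graph, after which Proposition~\ref{prop:S} finishes the job. Boundedness and membership in $\FF^p(X)$ are immediate from $v\in\ell^\infty(X)$. On $V$ the shift $u-M$ has the same $p$-Laplacian as $u$ (as $\nabla$ annihilates constants and $c=0$), hence is subharmonic there; since the constant $0$ is harmonic, Lemma~\ref{lem:vee} shows $(u-M)\vee 0=v$ is subharmonic on $V$. On $\partial_e V$ one has $u\le M$, so $v=0$ while every neighbour carries $v\ge 0$, and a direct computation gives $\Delta_p v=-\tfrac{1}{m(\cdot)}\sum_y b(\cdot,y)\p{v(y)}\le 0$; on $X\setminus\overline V$ all values of $v$ entering $\Delta_p v$ vanish, so $\Delta_p v=0$. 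Thus $v$ is subharmonic on all of $X$, whence $-v$ is a bounded superharmonic function and therefore constant by Proposition~\ref{prop:S}\ref{S5}. As $v=0$ on the nonempty set $\partial_e V$, this forces $v\equiv 0$, i.e. $u\le M$ on $V$.

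For \ref{A2} $\Rightarrow$ \ref{A1} I would argue contrapositively, exhibiting a bounded subharmonic function that violates the principle whenever the graph is $p$-hyperbolic. By Proposition~\ref{existence potential} there is a $p$-harmonic potential $u$ at a fixed $o\in X$ with $u(o)=1$, $0<u\le 1$, $\Delta_p u(o)>0$, $\Delta_p u=0$ on $X\setminus\{o\}$ and $\inf_X u=0$. Setting $w:=1-u$ and using that $\Delta_p$ is odd and unaffected by additive constants, $w$ is harmonic on $X\setminus\{o\}$ and satisfies $\Delta_p w(o)=-\Delta_p u(o)<0$, so $w$ is subharmonic on all of $X$, bounded, with $w(o)=0$ and $\sup_X w=1-\inf_X u=1$. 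Choosing $V:=X\setminus\{o\}$, connectedness gives $\partial_e V=\{o\}$ and $\overline V=X$, so $\sup_{\overline V}w=1>0=w(o)=\sup_{\partial_e V}w$, contradicting \ref{A2}.

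The only delicate point I anticipate is the boundary bookkeeping in the first implication: verifying that the zero-extension of $(u-M)_+$ is subharmonic across $\partial_e V$ and trivially $(sub)$harmonic on $X\setminus\overline V$. This rests on the inequality $u\le M$ on $\partial_e V$, which guarantees that the extension agrees with $(u-M)_+$ at the boundary vertices, and on the sign and oddness of $\p{\cdot}$. Once global subharmonicity is secured, the heavy lifting is done entirely by Proposition~\ref{prop:S}, and no new analytic estimates are required.
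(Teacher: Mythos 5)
Your proof is correct. The implication \ref{A1} $\Rightarrow$ \ref{A2} is essentially the paper's argument: the paper forms $u\vee(\sup_{\partial_e V}u+\epsilon)$ on $V$ extended by the constant $\sup_{\partial_e V}u+\epsilon$, verifies subharmonicity on $V$ via Lemma~\ref{lem:vee} and across $\partial_e V$ by the same sign computation you give, and then invokes Proposition~\ref{prop:S}; your version with $\epsilon=0$ and the shift to $v=(u-M)_+$ is an equivalent normalization that lets you conclude $v\equiv 0$ directly instead of by contradiction (your observation that $\Delta_p v$ at a point of $V$ only sees values on $\overline V$, so the zero-extension is harmless there, is the same boundary bookkeeping the paper performs). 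For \ref{A2} $\Rightarrow$ \ref{A1} you choose a different witness: the paper takes a nonconstant bounded subharmonic function $u$ from Proposition~\ref{prop:S} and violates the principle on a superlevel set $V=\set{u>\gamma}$, whereas you take $V=X\setminus\set{o}$ and $w=1-u$ for the $p$-harmonic potential of Proposition~\ref{existence potential}, using that $\p{\cdot}$ is odd so $\Delta_p w=-\Delta_p u$. Both are one-step reductions to already-established results; your choice is slightly more concrete (an explicit $V$ and an explicit function), the paper's is marginally more self-contained in that it only uses the abstract Liouville property rather than the structure of the potential. No gaps.
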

\begin{proof}
\ref{A2} $\implies$ \ref{A1}: Let the graph be $p$-hyperbolic. By Proposition~\ref{prop:S}, there exists a non-constant $p$-subharmonic function $u$ which is bounded from above. Hence, there are $x_1, x_2\in X$ such that $u(x_1)\neq u(x_2)$. Set $\gamma := (u(x_1)+u(x_2))/{2}$ and let $V:=\set{x\in X : u(x)> \gamma}$. Then $V$ is non-empty and
\[ \sup_{\partial_e V} u \leq \gamma < \sup_{\overline{V}}u.\]

\noindent \ref{A1} $\implies$ \ref{A2}: Let the graph be $p$-parabolic and let $V\subsetneq X$. Suppose that $u$ is bounded on $\overline{V}$ and $p$-subharmonic on $V$. We show that the additional assumption $\sup_{\partial_e V} u < \sup_{\overline{V}}u$ results in a contradiction.

Indeed, by the additional assumption, there exists $\epsilon > 0$ such that $\max_V u> \sup_{\partial_e V} u +\epsilon$. Set
\begin{align*}
u_\epsilon:= \begin{cases}
u \vee (\sup_{\partial_e V} u +\epsilon) \qquad &\text{in } V \\
\sup_{\partial_e V} u +\epsilon \qquad &\text{in } X\setminus V.
\end{cases}
\end{align*}
By Lemma \ref{lem:vee}, $u_\epsilon$ is $p$-subharmonic in $V$, and it is obviously harmonic in $X\setminus \overline{V}$. Furthermore, for all $x\in \partial_e V$ we have
\[
m(x)\Delta_p u_\epsilon(x)=\sum_{y\in V}b(x,y)\p{\sup_{\partial_e V} u + \epsilon - (u(y)\vee (\sup_{\partial_e V} u + \epsilon)) } \leq 0.
\]
Hence, $u_\epsilon$ is subharmonic and bounded on $X$. By Proposition~\ref{prop:S}, $u_\epsilon$ is constant which contradicts the additional assumption and finishes the proof.
\end{proof}

\subsection{A Lenz-Puchert-Schmidt-type characterization}\label{sec:LPS}
Here, we show a generalization to $p\in (1,\infty)$ of a characterization of ($2$-)parabolicity very recently obtained in \cite{LPS23}. The first basic idea is to connect parabolicity not only to the vanishing of the capacity of singletons (or, equivalently, finite sets) in the graph but also to the vanishing of a suitably defined capacity of subsets  of the boundary of the graph.

The second basic idea is to show that this is also equivalent to a statement closely connected to the Khas'miniski\u{\i}-type characterization in terms of finite energy $p$-superharmonic functions which will also play a role in Subsection~\ref{sec:K}.

\subsubsection{Boundaries, limits, and capacities}

Let $Y$ be a compact topological Hausdorff space. Then $Y$ is called \emph{compactification} of $X$, if $Y$ contains a copy of $X$, the restriction of the topology of $Y$ on $X$ is the discrete topology, and $X$ is dense in $Y$. The \emph{boundary} $\partial_YX$ of $x$ in $Y$ is then defined by $\partial_YX= Y\setminus X$.

A \emph{pseudometric} on $X$ is a function $\sigma\colon X\times X \to [0,\infty)$  which  is symmetric and satisfies the triangle inequality. We write $x \simeq y$ if $\sigma (x,y)=0$ and we extend this equivalence relation to sequences in $X$ by saying that  two  sequences $(x_n)$ and $(y_n)$ are $\sigma$-equivalent in $X$ if $\lim_{n\to\infty}\sigma (x_n,y_n)=0$. The \emph{completion} $\overline{X}^\sigma$ of $X$ with respect to $\sigma$ is the set of all equivalence classes of $\sigma$-Cauchy sequences.  The \emph{boundary} is then defined as $\partial_{\sigma}X = \overline{X}^\sigma \setminus (X/ \simeq)$.

If $\sigma$ is a metric inducing the discrete topology on $X$, then
\[ \partial_{\sigma}X= \partial_{\overline{X}^\sigma} X. \]

Let $\sigma$ be a pseudometric. Then, for all $r\geq 0$ and $x\in X$, we set
\[B_{r,\sigma}(x):= \set{y\in X : \sigma(x,y)\leq r}.\]

If $\sigma$ is a metric on $X$ that induces the discrete topology and $f\in C(X)$, then we set for all $A\sse \overline{X}^\sigma$,
\[ \liminf_{x\to A}f(x):= \sup_{A\sse O \text{ open in } \overline{X}^\sigma 
} \inf_{x\in O\cap X} f(x),\]
and
\[ \liminf_{x\to \infty}f(x):= \sup_{K\sse X \text{ finite }} \inf_{x\in X\setminus K} f(x).\]

We note that $\liminf_{x\to \partial_\sigma X}f(x)\geq  \liminf_{x\to \infty}f(x)$ and that equality holds if $\overline{X}^\sigma$ is compact (see \cite[Proposition~3.2]{LPS23}).

Now we turn to capacities, and extend the standard definition to the boundary of $X$.

Let $(X,b,m)$ be a graph. Then the \emph{$(o,p)$-capacity} of $V\sse X$ with respect to $\E_p$ and $o\in X$ is defined by
\[
\cc_{o,p}(V):= \inf \{\norm{u}_{o,p}\,:\, u\in \DD^p,\, u  \geq 1 \text{ on } V\},
\]
where we use the convention that the infimum over the empty set is $\infty$.

The normal contraction $0\vee \cdot \wedge 1$ can be used to show that the infimum can be taken over all $u\in \DD$ such that $0\leq u \leq 1$ on $X$ and $u = 1$ on $V$.

Let $Y$ be a compactification of $X$. Then, we set for all $A\sse Y$,
\[ \cc_{o,p}(A):= \inf\{\cc_{o,p} (O\cap X)\,:\, O \text{ is open in } Y, \, A\sse O\}.\]

Note that both definitions coincide on subsets of  $X$, and thus using the same notation does not lead to confusions. Moreover, we note that $\cc_{o,p}(A)=0$ if and only if there exists a sequence $(O_n)$ of open sets in $Y$ with $A\sse O_n$ and a sequence $(f_n)$ in $\DD^p$ such that $f_n\geq 1_{O_n\cap X}$, and $\norm{f_n}_{o,p}\to 0$ as $n\to \infty$.

\subsubsection{Small perturbations}

The following lemma is needed in the main result to go from a pseudometric to a metric. It is the generalisation of \cite[Proposition~2.6]{LPS23} from $p=2$ to $p>1$. We point out that the proof of part (a) is different from the original one where the bilinearity of the form $\E_2$ was used. We use Minkowski's inequality instead. Moreover, note that non-negative potentials and $p=1$ could be included.

\begin{lemma}[Small perturbations]\label{lem:sp}
	We have the following:
	\begin{enumerate}[label=(\alph*)]
	\item\label{lem:spA} For all $\epsilon >0$ there exists a strictly positive function $f_\epsilon\in C(X)$ such that for any $g\in C(X)$ with $g(x)\in (-f_\epsilon(x),f_\epsilon(x))$, $x\in X$, we have
	\[ \E_p(g)<\epsilon.\]
	\item\label{lem:spB} For all $\epsilon > 0$ and $u\in \DD^p$, there exists a function $u_\epsilon \in \DD^p$ such that $u_\epsilon (x) \neq u_\epsilon (y)$ for all $x,y\in X$ with $x\neq y$, $\norm{u-u_\epsilon}_{\infty}<\epsilon$, and
	\[\abs{\E_p(u-u_\epsilon)}< \epsilon.\]
	\end{enumerate}
\end{lemma}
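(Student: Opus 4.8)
The plan is to prove the two parts in order, since part (b) will build on the idea behind part (a). For part \ref{lem:spA}, the goal is to construct a single strictly positive weight $f_\epsilon$ so that \emph{any} function $g$ dominated pointwise by $f_\epsilon$ has $p$-energy below $\epsilon$. The natural approach is to enumerate the vertices as $X = \{x_1, x_2, \ldots\}$ and define $f_\epsilon$ vertex-by-vertex, making $f_\epsilon(x_k)$ small enough that the contribution of $x_k$ to the energy is controlled by $\epsilon 2^{-k}$. Concretely, I would first control the potential term: since the term $\sum_x c(x)|g(x)|^p \le \sum_x c(x) f_\epsilon(x)^p$, I choose $f_\epsilon(x_k)$ small enough that $c(x_k) f_\epsilon(x_k)^p \le \epsilon 2^{-k-1}$. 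For the edge term, I use the $p$-triangle inequality $|g(x)-g(y)|^p \le 2^{p-1}(|g(x)|^p + |g(y)|^p) \le 2^{p-1}(f_\epsilon(x)^p + f_\epsilon(y)^p)$, so
\[
\E_p(g) \le \frac{2^{p-1}}{2}\sum_{x,y} b(x,y)\bigl(f_\epsilon(x)^p + f_\epsilon(y)^p\bigr) + \sum_x c(x) f_\epsilon(x)^p = 2^{p-1}\sum_x \deg(x) f_\epsilon(x)^p + \sum_x c(x) f_\epsilon(x)^p,
\]
using symmetry of $b$ and the definition of $\deg$. Since $\deg(x_k)<\infty$ by local summability, I can shrink $f_\epsilon(x_k)$ further so that $2^{p-1}\deg(x_k) f_\epsilon(x_k)^p \le \epsilon 2^{-k-1}$ as well. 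Summing the geometric series over $k$ then gives $\E_p(g) < \epsilon$, as required. (The original proof uses Minkowski's inequality, which is the cleaner route; the crude $p$-triangle bound above works but produces a worse constant, so I would likely reorganize via Minkowski to split the $f_\epsilon$-energy from the $g$-energy more sharply.)

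For part \ref{lem:spB}, the idea is to make a generic small perturbation of $u$ that separates all the values $\{u(x)\}_{x\in X}$ while keeping both the sup-norm change and the energy change below $\epsilon$. The plan is to apply part \ref{lem:spA} with $\epsilon$ replaced by a suitable smaller quantity to obtain a strictly positive $f_\delta$, and then set $u_\epsilon := u + g$ where $g$ is a carefully chosen function with $|g(x)| < f_\delta(x)$ and $|g(x)|<\epsilon$ for all $x$. By part \ref{lem:spA}, $\E_p(u - u_\epsilon) = \E_p(-g) = \E_p(g) < \epsilon$ (after shrinking $\delta$), and $\norm{u-u_\epsilon}_\infty = \norm{g}_\infty < \epsilon$. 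The nontrivial requirement is injectivity: $u_\epsilon(x) = u(x) + g(x)$ must take distinct values at distinct vertices. Since $X$ is countable, the constraints ``$u(x)+g(x) = u(y)+g(y)$'' for each pair $x\neq y$ exclude only countably many forbidden relations among the values $g(x)$; I would choose the $g(x)$ recursively, at each vertex $x_k$ picking $g(x_k)$ in the open interval $(-f_\delta(x_k), f_\delta(x_k)) \cap (-\epsilon,\epsilon)$ avoiding the finitely many forbidden values $u(x_j) + g(x_j) - u(x_k)$ for $j < k$. Since an open interval is uncountable and only finitely many points are excluded at each stage, such a choice always exists, and the resulting $u_\epsilon$ is injective. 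Finally, $u_\epsilon = u + g \in \DD^p$ because $u \in \DD^p$ and $g \in \DD^p$ (the latter since $\E_p(g)<\infty$ and $g$ is bounded, hence $g \in \DD^p$ once one checks it lies in the finite-energy space), and $\DD^p$ is a vector space.

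I expect the main obstacle to be part \ref{lem:spA}: the delicate point is that one must control the edge term of the energy using \emph{only} local summability (finiteness of $\deg(x)$) rather than local finiteness, and the $p$-nonlinearity means the energy does not split additively, so a triangle-inequality estimate of the form above is essential and must be set up so that the per-vertex budget $\epsilon 2^{-k}$ genuinely dominates. Using Minkowski's inequality (as the authors indicate) circumvents the loss of the constant $2^{p-1}$ and isolates the dependence on $f_\epsilon$ cleanly, which is why I would favor it over the brute-force bound. Part \ref{lem:spB} is then essentially a routine ``measure-theoretic genericity'' argument once part \ref{lem:spA} supplies the energy control, the only care being the recursive avoidance of countably many coincidences to guarantee injectivity.
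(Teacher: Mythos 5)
Your proposal is correct, and in both parts it reaches the same conclusion by a slightly different mechanism than the paper. For part \ref{lem:spA}, the paper writes $g=\sum_x g(x)\mathds{1}_x$, uses Minkowski's inequality to make $\E_p^{1/p}$ subadditive over this decomposition, and then invokes Fatou's lemma along an exhaustion, reducing everything to the condition $\sum_x f_\epsilon(x)\,\E_p(\mathds{1}_x)^{1/p}<\epsilon^{1/p}$; your crude bound $\abs{g(x)-g(y)}^p\le 2^{p-1}(\abs{g(x)}^p+\abs{g(y)}^p)$ reduces to the essentially equivalent condition that $\sum_x(\deg(x)+c(x))f_\epsilon(x)^p$ be small (note $\E_p(\mathds{1}_x)=\deg(x)+c(x)$), with a worse constant but without needing Fatou, since the termwise estimate holds for the full nonnegative double sum directly. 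For part \ref{lem:spB}, the paper achieves injectivity by an irrationality trick: it fixes $F$ with all pairwise differences irrational and chooses $g$ so that $u-F-g$ is rational-valued, whence $\nabla_{x,y}u_\epsilon$ is irrational plus rational, hence nonzero. Your recursive avoidance of the finitely many forbidden values $u(x_j)+g(x_j)-u(x_k)$, $j<k$, at each stage is more elementary and accomplishes the same genericity; arguably it is more transparent, since the paper's construction of $F$ itself requires a similar countable recursion (or an explicit choice such as square roots of distinct primes). The only cosmetic point, shared with the paper's own write-up, is that to get the strict inequality $\norm{u-u_\epsilon}_\infty<\epsilon$ one should cap $\abs{g(x)}$ by, say, $\epsilon/2$ rather than $\epsilon$, and similarly budget the per-vertex energy contributions strictly below $\epsilon 2^{-k-1}$; this does not affect the argument.
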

\begin{proof}
\ref{lem:spA}: Having fixed $\epsilon > 0$, 
choose $f_\epsilon > 0$ such that
	\[  \sum_{x\in X}f_\epsilon(x)\E_p(\mathds{1}_x)^{1/p}< \epsilon^{1/p}.\]	
Now, let $g$ be such that $g(x)\in (-f_\epsilon(x),f_\epsilon(x))$, $x\in X$. Let $(K_n)$ be an increasing exhaustion of $X$ with finite sets, and set
	\[ g_n:= g\,\mathds{1}_{K_n}.\]
	Then, $(g_n)$ converges pointwise to $g$.	Moreover, $\E_p^{1/p}$ is a semi-norm on $\DD^p$, where the triangle inequality follows from Minkowski's inequality. Hence, using Fatou's lemma (i.e., the semi-continuity of $\E_p$) and the triangle inequality with respect to $\E_p^{1/p}$, we get
	\begin{multline*}
		\E_p^{1/p}(g)\leq \liminf_{n\to \infty}\E_p^{1/p}(g_n)\leq \liminf_{n\to \infty} \sum_{x\in K_n}\abs{g(x)}\E_p^{1/p}(\mathds{1}_x) \leq \sum_{x\in X}\abs{f_\epsilon(x)}\E_p^{1/p}(\mathds{1}_x)
		\\< \epsilon^{1/p} .
	\end{multline*}
	This shows the first claim.
	
\noindent \ref{lem:spB}: This follows now from \ref{lem:spA} in the same way as in \cite{LPS23}. For convenience, we show the nice argument here as well. Let $\epsilon > 0$ and take $f_\epsilon$ as in \ref{lem:spA} such that $f_\epsilon(x)< \epsilon$. Let $F\in C(X)$ be a function such that $F(x)-F(y)$ is irrational for any $x,y\in X, x\neq y$. Moreover, let $g\in C(X)$ be such that $g(x)\in (-f_\epsilon(x),f_\epsilon(x))$ and $u(x)-F(x)-g(x)$ is rational for all $x\in X$. Define $u_\epsilon (x):= u(x)-g(x)$, $x\in X$. Then, $u_\epsilon \in \DD^p$ by the triangle inequality. Furthermore, $\norm{u-u_\epsilon}_{\infty}=\norm{g}_{\infty}< \epsilon$ as well as $\E_p(u-u_\epsilon)=\E_p(g)<\epsilon$.
	
	For all $x\neq y$, we have
	\[\nabla_{x,y}u_\epsilon= \nabla_{x,y}F + \nabla_{x,y}(u-F-g).\]
	The right-hand side above is a sum of a irrational and a rational number, and thus, $\nabla_{x,y}u_\epsilon \neq 0$.	
\end{proof}

\subsubsection{Intrinsic and path pseudometrics}

Two special pseudometrics are of particular interest for us: path pseudometrics and intrinsic metrics. Here, we recall some basic definitions and statements. We want to mention that we are aware of another generalisation of the notion of intrinsic metrics for $p\neq 2$ in \cite{KM16}. However, we do a different generalisation of ($p=2$)-case here, which seems to be more natural in our setting. Path pseudometrics are discussed in detail in \cite[Section~11.2]{KLW21} and in \cite{KM19}, compare  also with \cite{LPS23}.

A non-negative symmetric function $w$ on $X\times X$ is called \emph{$p$-intrinsic} with respect to the edge weight $b$ and the measure $0\leq m\in C(X)$ if, for all $x\in X$, we have
\[\sum_{x\in X}b(x,y)w^p(x,y)\leq m(x).\]
We will be mostly  interested in $p$-intrinsic functions which are also pseudometrics.

An infinite sequence $(x_n)$ of pairwise different vertices such that $x_n\sim x_{n+1}$ for all $n\in \NN$ is called \emph{ray} on $X$. For a symmetric function $w\colon X\times X\to [0,\infty)$ we define the \emph{length} $l_w(\gamma)$ of the path $\gamma=(x_i)_{i=1}^n$ with respect to $w$ via
\[ l_w(\gamma)=\sum_{i=1}^{n-1} w(x_i,x_{i+1}),\]
and the length of the ray $\gamma=(x_i)_{i=1}^{\infty}
$ is given by $ l_w(\gamma)=\sum_{i=1}^{\infty} w(x_i,x_{i+1}).$

The \emph{path pseudometric} $d_w\colon X\times X\to [0,\infty]$ with respect to $w$ on $X$ is given by
\[
d_w(x,y):= \inf\{ l_w(\gamma) \,:\, \gamma \text{ is a path or ray from } x \text{ to } y\}.
\]
Moreover, we call a symmetric function $w\colon X\times X\to [0,\infty )$ \emph{alternative edge weight} if $w(x,y)> 0$ whenever $x\sim y$, $x,y\in X$.

There is a simple condition under which a symmetric function induces a discrete path metric.
\begin{lemma}
\label{lem:KM19}
Let $w$ be a non-negative alternative edge-weight on $X\times X$ such that for every $x$ there exists $r_x>0$ such that the set $\{y\sim x\,:\, w(x,y)< r_x\}$ is finite, then $d_w$ is a metric and  $(X, d_w)$ is discrete.
\end{lemma}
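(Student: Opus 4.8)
The plan is to split the statement into two assertions: that $d_w$ is a genuine metric (rather than merely a pseudometric), and that the induced topology on $X$ is the discrete topology. The triangle inequality, symmetry and $d_w(x,x)=0$ are immediate from the definition of the path pseudometric $d_w$ as an infimum of path/ray lengths, so the only nontrivial metric axiom is positivity: I must show $d_w(x,y)>0$ whenever $x\neq y$. Both the positivity and the discreteness will follow from a single quantitative observation, namely that near any fixed vertex $x$ the available ``short'' steps are finitely many and hence bounded away from $0$.

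First I would fix $x\in X$ and use the hypothesis to produce $r_x>0$ such that $S_x:=\{y\sim x : w(x,y)<r_x\}$ is finite. Set $\rho_x:=\min\bigl(r_x,\ \min_{y\in S_x} w(x,y)\bigr)$, where the inner minimum is over a finite (possibly empty) set and is strictly positive because $w$ is an \emph{alternative edge weight}, so $w(x,y)>0$ for every neighbour $y\sim x$. Then \emph{every} neighbour $y$ of $x$ satisfies $w(x,y)\geq \rho_x>0$: if $y\in S_x$ this is by definition of $\rho_x$, and if $y\sim x$ but $y\notin S_x$ then $w(x,y)\geq r_x\geq\rho_x$. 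The key point is therefore that the very first edge of any path leaving $x$ has length at least $\rho_x$.

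Now let $y\neq x$ and let $\gamma=(x_0,\dots,x_n)$ be any path or ray from $x=x_0$ to $y$. Since $y\neq x$ we have $n\geq 1$, so $\gamma$ uses at least the first edge $x_0\sim x_1$, whence
\[
l_w(\gamma)=\sum_{i=0}^{n-1} w(x_i,x_{i+1})\geq w(x_0,x_1)\geq \rho_x.
\]
Taking the infimum over all such $\gamma$ gives $d_w(x,y)\geq \rho_x>0$, which proves positivity and hence that $d_w$ is a metric. The same estimate shows discreteness: the ball $B_{r,d_w}(x)=\{y\in X: d_w(x,y)\leq r\}$ reduces to $\{x\}$ for any $r<\rho_x$, so each point is isolated and $(X,d_w)$ carries the discrete topology.

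The only delicate point to keep honest is the handling of rays versus finite paths, and of the case where $x$ has infinitely many neighbours (permitted here, since the graph is only assumed locally summable, not locally finite): one must verify that the lower bound $\rho_x$ is strictly positive even when $S_x=\emptyset$, in which case $\rho_x=r_x$ works, and that the bound applies to rays as well, which it does because the first-edge estimate $l_w(\gamma)\geq w(x_0,x_1)$ uses only the initial term of the (possibly infinite) length sum. Thus no obstacle of real substance arises; the content of the lemma is entirely concentrated in extracting the uniform lower bound $\rho_x$ on the first step, and the rest is the routine verification of the metric axioms.
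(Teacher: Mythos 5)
Your proof is correct and follows essentially the same route as the paper: the paper's one-line argument observes that the hypotheses force $\delta_x=\inf_{y\sim x}w(x,y)>0$, whence $d_w(x,x')\geq\delta_x$ for all $x'\neq x$, which is exactly your uniform first-step bound $\rho_x$. Your version merely spells out the finite-minimum argument and the ray/empty-set edge cases explicitly.
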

\begin{proof}
The assumptions imply that $\delta_x=\inf_{y\sim x} w(x,y)>0$ so that  $d_w(x,x')\geq \delta_x$ for every $x'\ne x$ and the conclusion follows at once.
\end{proof}

\subsubsection{The characterization and a corollary}

We are now in a position to show the main result of this section, Theorem~\ref{prop:LPS}, which is a generalisation of \cite[Theorem~4.2]{LPS23} from $p=2$ to $p>1$. For $p=2$, parts of it appeared already in \cite{Schm1}. Moreover, we want to mention that in the continuum, the implication ``\ref{L1} $\implies$ \ref{L2}'' is well-known for general $p$ and appears e.g. in \cite{Valtorta}. The proof of Theorem~\ref{prop:LPS} goes along the lines of its linear version in \cite{LPS23}. Since it is a very recent result, we show it here in detail. Note that $p=1$ could be included.

\begin{theorem}[Lenz-Puchert-Schmidt-type characterization]\label{prop:LPS}
	The following are equivalent:
		\begin{enumerate}[label=(\roman*)]
			\item\label{L1} The graph $G=(X,b,m)$ is $p$-parabolic.
			\item\label{L4} For one (or, equivalently, every) compactification $Y$ of $X$, we have \[\cc_{o,p}(\partial_Y X)=0.\]
			\item\label{L2} There is a function $f\in \DD^p$ such that $f(x) \to \infty$ as $x\to \infty$, i.e.,  for every $ M > 0 $  there exists $F \subseteq X $ finite such that
			$f > M $ in $ X \setminus F $.
			\item\label{L3} There is a $p$-intrinsic metric $\sigma$ that induces the discrete topology on $X$ such that distance balls with respect to $\sigma$ are finite.
			 \item\label{L3'} There exists a $p$-intrinsic alternative edge weight $w$ with respect to some finite measure such that the distance balls with respect to $d_w$ are finite.
		\end{enumerate}
\end{theorem}
\begin{proof}
\ref{L1} $\implies$ \ref{L4}: We can use Proposition~\ref{prop:D=D_{0}^{p}} and get the existence of an increasing null-sequence $(e_n)$ in $C_c(X)$ such that $0\leq e_n \nearrow 1$. Consider $f_n:=1-e_n$, $n\in \NN$. Then $0\leq f_n\leq 1$ and $f_n=1$ outside of a  finite set. Hence,
	\[ 0\leq \cc^p_{o,p}(\partial_Y X) \leq \E_p(f_n)+\abs{f_n(o)}^p= \E_p(e_n)+\abs{1-e_n(o)}^p\to 0,\qquad n\to \infty.\]
	
\noindent \ref{L4} $\implies$ \ref{L2}: By assumption, there exists a sequence of open sets in $Y$ $(O_n)$ with $\partial_YX\sse O_n$ and a sequence of functions $(f_n)$ such that $f_n\geq \mathds{1}_{O_n\cap X}$ for which
	\[ \lim_{n\to \infty}\norm{f_n}_{o,p}\to 0. \]
	
By passing  to a subsequence, which we also denote by $(f_n)$, we can assume that 	$\sum_{n\in\NN}\norm{f_n}_{o,p}<\infty$. Thus, $f:=\sum_{n\in\NN}f_n$ converges in the Banach space $(\DD^{p},\norm{\cdot}_{o,p})$, and therefore $f\in \DD^p$. By construction, $f\geq n$ on $U_n:=\cap_{k=1}^n O_k$. Since $U_n$ is open and $\partial_YX\sse U_n$, $\liminf_{x\to \partial_YX}f(x)\geq n$. Taking the limit yields the result.
	
\noindent \ref{L2} $\implies$ \ref{L3}: By Lemma~\ref{lem:sp}, we can assume that the function $f$ in \ref{L2} is injective. Hence $\sigma_f\colon X\times X\to [0,\infty)$ defined by $\sigma_f(x,y):=\abs{\nabla_{x,y}f}$ is an intrinsic metric with respect to the measure $0\leq m_f\in C(X)$ defined by
	\[m_f(x):= \sum_{x\in X}b(x,y)\sigma^p_f(x,y).\]
	Note that $m_f$ is a finite measure since $m_f(X)=2\E_p(f)<\infty$. Since $\liminf_{x\to \infty}f(x)=\infty$, $B_{r,\sigma}(o)$ is a finite set for all $o\in X$ and $r\geq 0$. Moreover, by the choice of $f$, $\sigma_f$ induces the discrete topology on $X$.
	
\noindent \ref{L3} $\implies$ \ref{L1}: Let $\sigma$ be a $p$-intrinsic metric with respect to the measure $0\leq m\in C(X)$ which induces the discrete topology and such that $B_{r,\sigma}(o)$ is a finite set for all $r\ge 0$ and $o\in X$. Let $(K_n)$ be an increasing exhaustion of $X$ by finite sets. Let us consider the functions $\sigma_n\colon X\to [0,\infty)$ defined via
	\[ \sigma_n (x):= \inf_{y\in K_n}\sigma (x,y),\]
and let $e_n:= 0\vee (1-\sigma_n) \geq 0$. Since $K_n$ is finite and distance balls are finite, $e_n\in C_c(X)$. We are left to show that $(e_n)$ is a null-sequence. Firstly, note that $\abs{\nabla_{x,y}\sigma_n}\leq \sigma(x,y)$ for all $x,y\in X$. Since $0\vee \cdot $ is a normal contraction we have
\begin{align*}
0\leq \E_p(e_n)\leq \E_p(\sigma_n)&= \frac{1}{2}\sum_{(x,y)\in (X\times X)\setminus (K_n\times K_n)} b(x,y) \abs{\nabla_{x,y}\sigma_n}^p\\
 &\leq\frac{1}{2}\sum_{(x,y)\in (X\times X)\setminus (K_n\times K_n)} b(x,y) \abs{\sigma(x,y)}^p
 \\
 &\leq m(X\setminus K_n)\to 0, \quad n\to \infty.
\end{align*}
Hence, $(e_n)$ is a null-sequence and therefore, the graph is $p$-parabolic.

\noindent \ref{L3} $\implies$ \ref{L3'}: According to \cite[Lemma 2.7]{LPS23}, since $\sigma$ is a metric,  $d_\sigma\geq \sigma$  and $d_\sigma(x,y)=\sigma(x,y)$ if $x\sim y$ so distance balls with respect to $d_\sigma$ are contained in the corresponding balls with respect to $\sigma$ and $\sigma$ is obviously a $p$-intrinsic weight, so we can take $w=\sigma$.

\noindent \ref{L3'} $\implies$ \ref{L3}: Since $d_w(x,y)\leq w(x,y)$ for all $x,y\in X$ such that $x\sim y$, also $d_w$ is $p$-intrinsic. Moreover, it is a pseudometric. If we can show that it induces the discrete topology it follows that $d_w$ is actually a metric.  This can be seen as follows: Since for all $r\geq 0$ and $o\in X$, the balls $ B_{r,d_w}(o)$ are finite,  so are also the sets $\{y\sim x\,:\, w(x,y)< r\}$. By Lemma~\ref{lem:KM19}, $d_w$ is a metric and $(X,d_w)$ is discrete.
\end{proof}

We also get the following nice characterization on locally finite graphs as a consequence of the previous proposition. It was observed first in \cite{Y77}, see also \cite[Corollary~4.3]{LPS23}.

Before we state the result, we need one more definition: The set of rays $\Gamma$ on $X$ with respect to the graph $b$ is called \emph{$p$-null} if there exists an alternative edge weight $w$ such that $\sum_{x,y\in X}b(x,y)w^p(x,y)< \infty$ and $l_w(\gamma)=\infty $ for all rays $\gamma \in \Gamma$.

\begin{corollary}[Yamasaki-type characterization]
	We have the following:
	\begin{enumerate}[label=(\alph*)]
			\item\label{Y1} If the graph is $p$-parabolic then the set of rays is $p$-null.
			\item\label{Y2} A locally finite graph is $p$-parabolic if and only if the set of rays is $p$-null.
		\end{enumerate}
\end{corollary}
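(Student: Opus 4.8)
The plan is to derive both parts as consequences of Proposition~\ref{prop:LPS}, using the equivalence between $p$-parabolicity and the existence of a $p$-intrinsic alternative edge weight $w$ (condition \ref{L3'}) together with the notion of a $p$-null set of rays. The key observation linking the two concepts is that the finiteness of distance balls with respect to $d_w$ forces every ray to have infinite $d_w$-length, which is precisely the defining property of a $p$-null set.

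For part \ref{Y1}, suppose the graph is $p$-parabolic. By Proposition~\ref{prop:LPS} \ref{L3'} there is a $p$-intrinsic alternative edge weight $w$ with respect to some finite measure $m$ such that all distance balls $B_{r,d_w}(o)$ are finite. The $p$-intrinsic condition with finite measure gives exactly $\sum_{x,y\in X}b(x,y)w^p(x,y)\leq m(X)<\infty$, which is the first requirement for $p$-nullity. It remains to check that $l_w(\gamma)=\infty$ for every ray $\gamma=(x_i)$. This follows because a ray consists of pairwise distinct vertices leaving every finite set; if $l_w(\gamma)$ were finite, say equal to $R$, then all $x_i$ would lie in $B_{R,d_w}(x_1)$, contradicting the finiteness of that ball since a ray is infinite. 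Hence the set of all rays is $p$-null.

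For part \ref{Y2}, the forward implication is immediate from \ref{Y1}, so I only need the converse under local finiteness. Assume the set of rays is $p$-null, witnessed by an alternative edge weight $w$ with $\sum_{x,y\in X}b(x,y)w^p(x,y)<\infty$ and $l_w(\gamma)=\infty$ for all rays $\gamma$. I would verify condition \ref{L3'} of Proposition~\ref{prop:LPS}. The summability condition shows $w$ is $p$-intrinsic with respect to a finite measure (one may take $m(x)=\sum_y b(x,y)w^p(x,y)$, which has finite total mass). The crucial point is to show that the distance balls with respect to $d_w$ are finite. Here local finiteness enters: I would argue that if some ball $B_{r,d_w}(o)$ were infinite, then since each vertex has only finitely many neighbours, an infinite ball would contain arbitrarily long paths from $o$ staying within $d_w$-distance $r$, and by a König's-lemma-type compactness argument one could extract a ray $\gamma$ emanating from $o$ with all initial segments of $d_w$-length at most $r$, giving $l_w(\gamma)\leq r<\infty$, a contradiction.

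The main obstacle is the compactness argument in the converse direction of part \ref{Y2}: extracting an infinite ray of finite $w$-length from an infinite distance ball. Local finiteness is exactly what makes this work, since it allows a König's lemma argument on the (now locally finite) tree of paths from $o$ of bounded $d_w$-length; without it, an infinite ball could fail to contain any infinite simple path, so the implication genuinely requires the hypothesis. Once the finiteness of balls is established, Proposition~\ref{prop:LPS} \ref{L3'} $\implies$ \ref{L1} delivers $p$-parabolicity and completes the proof.
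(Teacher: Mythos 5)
Your proof is correct and follows the route the paper intends: the paper's own proof is just a pointer to \cite[Corollary~4.3]{LPS23}, which is precisely the deduction from the ball-finiteness condition \ref{L3'} of Proposition~\ref{prop:LPS} that you carry out (part \ref{Y1} from \ref{L1}$\implies$\ref{L3'} plus the observation that a ray of finite $w$-length would lie in a single finite $d_w$-ball, and the converse in part \ref{Y2} by verifying \ref{L3'} for the measure $m(x)=\sum_y b(x,y)w^p(x,y)$). The only detail worth adding in your K\"onig's lemma step is to first discard loops, so that you work in the locally finite tree of \emph{simple} paths from $o$ of $w$-length at most $r+\epsilon$ (erasing a loop only decreases the $w$-length); this guarantees that the infinite branch you extract has pairwise distinct vertices and is therefore a ray in the paper's sense.
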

\begin{proof}
	Mutatis mutandis, this follows as in \cite[Corollary~4.3]{LPS23}.
\end{proof}
Note that also here $p=1$ can be included.

\subsection{The Area Perspective}\label{s:area}

	Before we state our main result, we need some definitions. By $d$ we denote the \emph{combinatorial graph metric}, i.e., for all $x,y\in X$ the number $d(x,y)\in \NN$ denotes the least number of edges of paths connecting $x$ and $y$. 
	Furthermore, for $V\sse X$ and $x\in X$, we set $d(V,x):=\min_{y\in V}d(y,x)$, and
	\begin{align*}
		B_r(V):=\set{x\in X : d(V,x) \leq r}, \\
		S_r(V):=\set{x\in X : d(V,x) = r}.
	\end{align*}
	
	A function $f$ is called \emph{spherically symmetric} with respect to $V$ if there exists a function $g \colon \NN \to \RR $ such that $f(x)=g(r)$ for all $x\in S_r(V)$ and $r\in \NN$. To simplify the notation, we will also write $f(r)=f(x)$ for all $x\in S_r(V)$ and $r\in \NN$.	
		
	A graph with potential $c$ is called \emph{model graph} on $V$ if the outer and inner weighted degrees with respect to $V$, defined respectively by
	\[
	k_\pm(x)=m(x)^{-1}\sum_{y\sim x\atop d(V,y)=d(V,x)\pm 1} b(x,y),
	\]
	and the weighted potential are spherically symmetric with respect to $V$, see Fig.~\ref{Fig1}.
	We will (mostly) consider the case where $V=\{x_0\}$, and write for simplicity $B_r(x_0)$ and $ S_r(x_0)$, possibly omitting $ x_0 $ and writing $ B_r $, $ S_r $ when the reference point is fixed. 	
\begin{figure}[h]\centering			
		\begin{tikzpicture}[
			scale=.7,
			every node/.style={circle, draw, fill=gray!30, minimum size=1mm},
			invisible/.style={draw=none, fill=none}]
			
			\node (o) at (0,0) {$x_0$};
			
			\node (v1) at (2,1) {};
			\node (v2) at (2,-1) {};
			\node (v3) at (1.5,2.2) {};
			\node (v4) at (1.5,-2.2) {};
			
			\node (v5) at (4,2) {};
			\node (v6) at (4,0.75) {};
			\node (v7) at (4,-0.75) {};
			\node (v8) at (4,-2) {};
			
			\node (v9) at (6.5,-0.5) {$x$};
			\node (v10) at (6,1.5) {};
			
			\node (v11) at (8,1.75) {};
			\node (v12) at (8,-2) {};   		
			\node[invisible] (v13) at (8,3) {$\ldots$};
			
			\draw (o) -- (v1);
			\draw (o) -- (v2);
			\draw (o) -- (v3);
			\draw (o) -- (v4);
			
			\draw (v1) -- (v2);
			
			\draw[red, ultra thick] (v1) -- (v5);
			\draw[red, ultra thick] (v1) -- (v6);
			\draw[red, ultra thick] (v2) -- (v7);
			\draw[red, ultra thick] (v2) -- (v8);
			\draw[red, ultra thick] (v3) -- (v5);
			\draw[red, ultra thick] (v4) -- (v8);
			
			\draw (v5) -- (v6);
			\draw (v7) -- (v8);
			\draw[blue, ultra thick] (v6) -- (v9);
			\draw[blue, ultra thick] (v7) -- (v9);
			
			\draw (v5) -- (v10);
			\draw (v9) -- (v10);
			
			\draw[brown, ultra thick] (v9) -- (v11);
			\draw[brown, ultra thick] (v9) -- (v12);
			
			\draw[dashed] (v11) -- (v13);
			\draw[dashed] (v10) -- (v13);
		\end{tikzpicture}	
		\caption{Example of \textcolor{blue}{\(k_-(x)\)}, \textcolor{brown}{\(k_+(x)\)},  \textcolor{red}{\(\partial B(1)\)} w.r.t. $x_0$}
		\label{Fig1}
\end{figure}

In this case, for a spherically symmetric function $ f $ on a model graph we have the following formulas for the $p$-Laplacian:
\[
\Delta_pf(0) = k_+(0) \left( f(0)-f(1) \right)^{\langle p-1 \rangle}
\]
and, for $ r > 0 $,
\[
\Delta_pf(r) = k_+(r) \left( f(r)-f(r+1) \right)^{\langle p-1 \rangle} + k_-(r) \left( f(r)-f(r-1) \right)^{\langle p-1 \rangle}.
\]
We also use the following boundary notation: for every $ r \geq 0 $ we set
\[
\partial B_{x_0}(r)=\partial B(r) := \sum_{x \in S_r, y \in S_{r+1}} b(x,y).
\]

It follows from the definitions that for model graphs
\begin{equation}
	\label{eq model}
	m(S_r)=\frac{\partial B(r)}{k_+(r)}=\frac{\partial B(r-1)}{k_-(r)}
\end{equation}
and that all quantities are finite for locally finite graphs.

The goal of this subsection is to prove the following result.

\begin{theorem}[Area characterization]\label{Area}		
		Let the graph be locally finite.
		\begin{enumerate}[label=(\alph*)]
			\item\label{Area1}If \[ \sum_{r=1}^\infty \left( \frac{1}{\partial B_{x_0}(r)}\right)^{1/(p-1)}= \infty\] for some $x_0\in X$, then the graph is  $p$-parabolic.
			\item\label{Area2} A \emph{model} graph with respect to $x_0\in X$ is $p$-parabolic if and only if
			\[ \sum_{r=1}^\infty \left(\frac{1}{\partial B_{x_0}(r)}\right)^{1/(p-1)}= \infty.\]
			 Furthermore, if the model graph is $p$-hyperbolic then its Green's function $g$ with respect to $x_0\in X$ is radial and given by
			\[
			g(r)= \sum_{k=r}^{\infty} \left(\frac{1}{\partial B_{x_0}(k)}\right)^{1/(p-1)}, \qquad r\in \NN_0.
			\]
		\end{enumerate}
\end{theorem}
\begin{proof} Set $\partial B_{x_0}(r)=\partial B(r)$ and $S_r(x_0)=S_r$.
	
\ref{Area1}: We prove that, if
\[
\sum_{r=1}^\infty \frac{1}{(\partial B(r))^{1/(p-1)}} = \infty,
\]
then there exists a null-sequence $ (e_n) $ as in Proposition \ref{prop:D=D_{0}^{p}}.

First of all, if  $ \phi$ is a  spherically symmetric function,  by spherical  symmetry $\nabla_{x,y}\phi=0$ if $x,y\in S_r$. 
If, in addition, $ \phi(x) = 0 $ for every $ x \in S_r $, $ r \geq n+1 $ then,  
\begin{align*}
\E_p(\phi_n) &= \frac{1}{2} \sum_{x,y\in X} b(x,y)\abs{\nabla_{x,y}\phi}^p \\
&= \sum_{r=0}^{n} \sum_{x \in S_r \atop y \in S_{r+1}} b(x,y)\abs{\phi(x)-\phi(y)}^p \\
&=  \sum_{r=0}^{n} \sum_{x \in S_r \atop y \in S_{r+1}} b(x,y)\abs{\phi_n(r)-\phi(r+1)}^p \\
&= \sum_{r=0}^{n} \abs{\phi(r)-\phi(r+1)}^p \partial B(r).
\end{align*}
Next,  define $ e_n(x) = e_n(r) $ recursively by $e_n(0)=1$, $e_n(r)=0$ if $r\geq n=1$ and
\[
e_n(r)-e_n(r+1)= \frac{c_n}{(\partial B(r))^{1/(p-1)}}\,  \text{ if }\, 0\leq r\leq n-1,
\]
with
\[
c_n= \left( \sum_{r=0}^n \left(\frac{1}{\partial B(r)}\right)^{1/(p-1)}\right)^{-1}.
\]
Thus, for every $1\leq r\leq n$
\[
e_n(r)=e_n(0)-\sum_{j=0}^{r-1}[e_n(j)-e_n(j+1)]=1 - c_n\sum_{j=0}^{r-1}
\left(1\over \partial B(j)\right)^{1/(p-1)}.
\]
It follows that $ e_n \nearrow 1 $ and, using the previous computation,
\[
\E_p(e_n) = \sum_{r=0}^{n} \partial B(r) \frac{c_n^p}{\left( \partial B(r) \right)^{p/(p-1)}} = c_n^{p-1} \to 0
\]
as $ n \to \infty $. This concludes the proof of \ref{Area1}.

\ref{Area2}: We only need to prove that, if
\[
\sum_{r=1}^\infty \frac{1}{(\partial B (r))^{1/(p-1)}}< \infty,
\]
then the graph has a Green's function.
We  define $ g\colon X \to \mathbb{R}^+ $ such that, for every $ r \geq 0$ and $ x \in S_r $,
\[
g(x)=g(r)= \sum_{k=r}^{\infty} \left(\frac{1}{\partial B(k)}\right)^{1/(p-1)}.
\]
Then $g$ is spherically symmetric with
\[
g(r)-g(r+1)=\frac 1{\partial B(r)^{1/(p-1)}}.
\]
It follows from the expression of the $\Delta_p$ for radially symmetric functions on model graphs and  \eqref{eq model} that
\[
\Delta_pg(0) = \frac{\partial B(0)}{m(0)} \frac{1}{\partial B(0)} =\frac{1}{m(0)}
\]
and, for $ r > 0 $,
\[
\Delta_pg(r) = \frac{k_+(r)}{\partial B(r)} - \frac{k_-(r)}{\partial B(r-1)} = 0,
\]
as required to complete the proof.
\end{proof}

\subsection{Weak Maximum Principle and Liouville Property}\label{sec:WMP}

In this subsection we introduce the weak maximum principle and show its equivalence with the $p$-parabolicity of the graph. Moreover, we give the definition of the Khas'minski\u{\i}-type criterion and prove that it is a sufficient condition for the $p$-parabolicity to hold. In this subsection, $p=1$ could be included.

\begin{definition}
We say that a graph $G=(X,b,m)$ satisfies the weak maximum principle \eqref{W} if for  every nonconstant, bounded above function $u\in \FF^p$ and for every $\gamma < u^*:=\sup u$
\[
\tag{W}\label{W}
\sup_{\Omega_{\gamma}} \Delta_p u > 0
\]
on the superlevel set $ \Omega_{\gamma} := \set{ x\in X \, : \, u(x) > \gamma } $.
\end{definition}
Recall that we write $\kappa(x) \to \infty$ as $x\to \infty$ for a function $\kappa\in C(X)$ if  for every $ M > 0 $  there exists $F \subseteq X $ finite such that $\kappa > M $ in $ X \setminus F $.
\begin{definition}
We say that a graph $G=(X,b,m)$ satisfies the Khas'minski\u{\i} criterion \eqref{K} if there exists a function $\kappa$ satisfying $\kappa(x) \to \infty$ as $x\to \infty$ and
\[
\tag{K}\label{K}
\Delta_p \kappa \geq 0 \text{ on } X \setminus K,
\]
for some finite set $ K $.
\end{definition}

We prove the following proposition.

\begin{proposition}\label{prop:weak_char}
The following holds:
\begin{enumerate}[label=(\alph*)]
\item\label{prop:weak_char1} A graph is $p$-parabolic if and only if \eqref{W} holds. 
\item\label{prop:weak_char2} If
\eqref{K} holds then so does \eqref{W} and therefore the graph is $p$-parabolic.
\end{enumerate}
\end{proposition}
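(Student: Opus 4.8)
The plan is to prove the two parts of Proposition~\ref{prop:weak_char} separately, reusing the characterizations already established. For part~\ref{prop:weak_char1}, I would argue both implications through the Ahlfors-type characterization (Proposition~\ref{Ahlfors}) and the Liouville property for bounded superharmonic functions (Proposition~\ref{prop:S}). For the direction ``$p$-parabolic $\implies$ \eqref{W}'', I would argue by contraposition: suppose \eqref{W} fails, so there is a nonconstant, bounded-above $u\in\FF^p$ and some $\gamma<u^*$ with $\sup_{\Omega_\gamma}\Delta_p u\le 0$, i.e. $u$ is $p$-superharmonic on the superlevel set $\Omega_\gamma=\set{u>\gamma}$. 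The natural move is to pass to $v:=u\vee\gamma$, which by Lemma~\ref{lem:vee} (applied with $f=0$ to the subsolution $-u$, or directly to superharmonicity via the dual $\wedge$-statement) remains $p$-superharmonic on $\Omega_\gamma$ and is constant equal to $\gamma$ elsewhere, hence superharmonic on all of $X$; since $v$ is bounded and nonconstant (as $u^*>\gamma$ forces $v$ to exceed $\gamma$ somewhere), Proposition~\ref{prop:S} gives $p$-hyperbolicity. For the converse ``\eqref{W} $\implies$ $p$-parabolic'', again by contraposition: if the graph is $p$-hyperbolic, Proposition~\ref{prop:S} furnishes a nonconstant bounded superharmonic function $u$; then $-u$ is a bounded, nonconstant subharmonic function, and choosing $\gamma$ strictly between two distinct values of $u$ yields a superlevel set on which $\Delta_p u\le 0$ everywhere, so \eqref{W} fails.

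For part~\ref{prop:weak_char2}, the plan is to show \eqref{K} $\implies$ \eqref{W} directly and then invoke part~\ref{prop:weak_char1}. Assume a Khas'minski\u{\i} potential $w$ exists, so $w\to\infty$ at infinity and $\Delta_p w\ge 0$ on $X\setminus K$ for a finite set $K$. To verify \eqref{W}, take any nonconstant, bounded-above $u\in\FF^p$ with $u^*=\sup u$ and any $\gamma<u^*$, and suppose for contradiction that $\Delta_p u\le 0$ on $\Omega_\gamma$. The idea is to perturb $u$ by a small multiple of $w$: for $\epsilon>0$ consider $u_\epsilon:=u+\epsilon w$. Since $u$ is bounded above while $w\to\infty$, the function $u_\epsilon$ attains its maximum over $\Omega_\gamma$ at some interior vertex $x_\epsilon$ (the superlevel set $\Omega_\gamma$ is where $u$ is large, and $u_\epsilon$ blows up only through $w$, so its sup on $\overline{\Omega_\gamma}$ is achieved). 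At a point where $u_\epsilon$ attains a local maximum one has $\Delta_p u_\epsilon(x_\epsilon)\ge 0$ by the definition of $\Delta_p$ as a sum of terms $\p{\nabla_{x,y}u_\epsilon}$ with nonnegative weights $b(x,y)$ — at a maximizer every difference $\nabla_{x_\epsilon,y}u_\epsilon\ge 0$, so $\Delta_p u_\epsilon(x_\epsilon)\ge 0$.

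The main obstacle, and the step requiring the most care, is reconciling this sign information with the superharmonicity of $u$ and the near-superharmonicity of $w$, because $\Delta_p$ is nonlinear and does not split additively over $u+\epsilon w$. I cannot simply write $\Delta_p(u+\epsilon w)=\Delta_p u+\epsilon\Delta_p w$. The plan here is to exploit the structure more carefully: rather than adding inside the operator, I would locate the maximizer $x_\epsilon$ of $u_\epsilon$ and derive a contradiction from the maximizer property alone combined with where that maximizer can lie. Concretely, since $w\to\infty$ forces $x_\epsilon$ eventually outside $K$ as $\epsilon$ varies (or one arranges the analysis so the relevant maximum is attained off $K$ and inside $\Omega_\gamma$), one compares the behavior of $u$ (superharmonic, so it has no interior strict maximum by the strong maximum principle of Lemma~\ref{max_principle}) against the growth of $w$. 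An alternative and cleaner route that avoids the nonlinear splitting is to argue that \eqref{K} directly yields a nonconstant bounded subharmonic function violating the Ahlfors conclusion: normalize $w$ via a bounded concave increasing reparametrization $\psi$ so that $\psi\circ w$ is bounded, subharmonic off $K$ (using that concave functions of $p$-superharmonic functions are $p$-subharmonic, cf. the argument in the proof of Proposition~\ref{prop:S}), nonconstant, and tends to its supremum only at infinity; then on a suitable superlevel set the Ahlfors identity $\sup_{\overline V}=\sup_{\partial_e V}$ fails, so by Proposition~\ref{Ahlfors} the graph is $p$-parabolic, whence \eqref{W} holds by part~\ref{prop:weak_char1}. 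I would pursue this reparametrization route, as it sidesteps the additivity problem entirely and reduces everything to the already-proven equivalences.
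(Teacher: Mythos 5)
Your part~\ref{prop:weak_char1} is, up to relabelling, the paper's own argument: negate \eqref{W}, glue the resulting function to the constant $\gamma$ via $u\vee\gamma$ and Lemma~\ref{lem:vee}, and invoke Proposition~\ref{prop:S}; the converse is the same contrapositive the paper uses. One caveat: with this paper's sign convention ($L_pu\ge 0$ is \emph{super}harmonic, so $\Delta_pu\le 0$ on $\Omega_\gamma$ makes $u$ \emph{sub}harmonic there), your labels are systematically flipped. This matters because $\vee$ preserves subsolutions (Lemma~\ref{lem:vee}), not supersolutions, so the sentence ``$u\vee\gamma$ remains $p$-superharmonic'' is false as written; with ``super'' replaced by ``sub'' throughout (and Proposition~\ref{prop:S} applied through the sub/super duality noted after its proof), the argument is exactly the paper's.

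Part~\ref{prop:weak_char2} has a genuine gap. You correctly identify that $\Delta_p(u+\epsilon w)$ does not split additively, but neither of your two escape routes works. The first (locating a maximizer of $u+\epsilon w$) only yields $\Delta_p(u+\epsilon w)(x_\epsilon)\ge 0$, which contradicts nothing; the paper's resolution is different and does not add inside the operator at all: it forms the set $A=\{x\in\Omega_\gamma : u(x)>\gamma+\epsilon w(x)\}$, which is \emph{finite} precisely because $u$ is bounded above and $w\to\infty$, observes that $\Delta_p(\gamma+\epsilon w)=\epsilon^{p-1}\Delta_p w\ge 0$ by homogeneity and translation invariance, and applies the weak comparison principle (Lemma~\ref{lem:WCPNonNegativePotential}) to the \emph{pair} $u$ and $\gamma+\epsilon w$ on $A$ to get $u\le\gamma+\epsilon w$ there, contradicting $x_0\in A$. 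Your preferred ``cleaner route'' is unsound for two independent reasons. First, the logic of Proposition~\ref{Ahlfors} is reversed: exhibiting a bounded subharmonic function and a set $V$ with $\sup_{\overline V}u>\sup_{\partial_e V}u$ \emph{falsifies} condition (ii) there and hence proves $p$-\emph{hyperbolicity}, the opposite of what you need. Second, and more fundamentally, passing to a bounded reparametrization $\psi\circ w$ destroys the content of \eqref{K}: a bounded function that is superharmonic off a finite set $K$ and tends to its supremum at infinity exists on many $p$-hyperbolic graphs (e.g.\ $1$ minus the equilibrium potential of $K$ on $\ZZ^3$ for $p=2$), so no argument using only $\psi\circ w$ can establish parabolicity --- the blow-up of $w$ at infinity is exactly the hypothesis that must be used, and it is used in the paper only through the finiteness of $A$. (The auxiliary claim that concave increasing functions of $p$-superharmonic functions are $p$-subharmonic is also false; such compositions preserve superharmonicity.)
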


\begin{proof}
\ref{prop:weak_char1}: First, assume that the graph is $p$-hyperbolic, so that by Proposition~\ref{prop:S} there exists a nonconstant, bounded above function $u$ such that $\Delta_p u \leq 0$. Then $\Omega_{\gamma} \neq \emptyset$ for every $ \gamma < u^*=\sup u $ and
\[
\sup_{\Omega_{\gamma}} \Delta_p u \leq 0,
\]
which contradicts \eqref{W}. This shows that \eqref{W} implies $p$-parabolicity.

For the converse, assume that there exists a nonconstant, bounded above function $u$ and $\gamma<u^*$ such that $\sup_{\Omega_{\gamma}} \Delta_p u \leq 0$. Without loss of generality we may assume that $u^*>0$  and  $\gamma=0$. Define  $v:X\to \mathbb{R}$ by
\begin{align*}
v:= u \vee 0 = \begin{cases}
u &\text{ in } \Omega_0 \\
0 &\text{ in } X \setminus \Omega_0.
\end{cases}
\end{align*}
An easy calculation shows that $v$ is subharmonic (or apply Lemma~\ref{lem:vee}), thus negating the $p$-parabolicity.

\noindent \ref{prop:weak_char2}:
Assume that \eqref{K} holds and let $\kappa$ be such that $\kappa(x)\to \infty$ as $x\to\infty$, and $\Delta_p \kappa \geq 0$ on $X\setminus K$ for some finite set $K$. By translating   $\kappa$, if necessary, we may assume that $\min_K \kappa=0$,  so that $\kappa\geq\max_{\partial_e K} \kappa\ge 0$ on $ X \setminus K $ by the weak comparison principle, Lemma~\ref{lem:WCPNonNegativePotential}.

Assume by contradiction that \eqref{W} does not hold, so that there exists a nonconstant, bounded above function $u$ such that $ \sup_{\Omega_{\gamma}} \Delta_p u \leq 0$ for some $\gamma<u^*$. Again by Lemma~\ref{lem:WCPNonNegativePotential}, $u$ does not attain a maximum. We may assume that $\gamma$ is such that $\max_{{K}} u < \gamma < u^*$, so that $ \Omega_{\gamma} \cap {K} = \emptyset $. Let $x_0 \in \Omega_{\gamma}$ such that $\gamma<\gamma'<u(x_0)$ and choose $\epsilon$ such that $\epsilon \kappa(x_0) < \gamma'-\gamma$. We now define the set $A:=\left\{ x\in \Omega_\gamma \, : \, u(x) > \gamma + \epsilon \kappa (x)\right\}$, which is clearly a subset of $\Omega_{\gamma}$ by definition. Note that $x_0 \in A \neq \emptyset$ and that $A$ is finite because of the properties of $\kappa$. Note that the following system is then satisfied
\begin{align*}
\begin{cases}
\Delta_p u \leq 0\leq \Delta_p (\gamma+\epsilon \kappa) & \text{ in } A \\
u \leq \gamma + \epsilon \kappa & \text{ on } \partial_e A.
\end{cases}
\end{align*}
By Lemma \ref{lem:WCPNonNegativePotential}, we conclude that $u\leq \gamma+\epsilon \kappa $ on $A$, which is a contradiction. This completes the proof.
\end{proof}

\subsection{Khas'miniski\u{\i}-type characterization}\label{sec:K}
This subsection is devoted to show a Khas'miniski\u{\i}-type characterization in the quasi-linear setting on graphs. On manifolds a corresponding version is given in \cite{Valtorta}, see also \cite{ValtortaDiss}. The main idea there {is} to use the existence of solutions to the so-called obstacle problem. A similar argument on locally finite graphs has been shown to work in \cite{HKO}. We show the corresponding version for locally summable graphs in Appendix~\ref{sec:Alternative}. Here, we provide a shorter proof that is more directly based on variational methods. This subsection uses a result of Subsection~\ref{sec:LPS}.

\begin{theorem}\label{khasm}
	Let $G=(X,b,m)$ be a $p$-parabolic graph and let $ K \subseteq X $ be a finite set. Then there exists a non-negative function $\kappa\in C(X)$ such that
	\begin{enumerate}[label=(\alph*)]
			\item $ \kappa \in \DD^p$;
			\item $\kappa$ is $p$-superharmonic on $X\setminus K$ and vanishes on $K$.
			\item $ \kappa(x) \to \infty $ as $ x \to \infty $, i.e.,  for every $ M > 0 $  there exists $F \subseteq X $ finite such that
			$\kappa > M $ in $ X \setminus F $.
		\end{enumerate}
	\end{theorem}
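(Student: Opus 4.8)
The plan is to construct $\kappa$ as a monotone limit of solutions to obstacle problems on a finite exhaustion, using as obstacle a proper function of finite $p$-energy supplied by parabolicity, and recycling that same function as boundary data so that it remains an admissible competitor on every finite piece. First I would invoke Proposition~\ref{prop:LPS}, implication \ref{L1}$\implies$\ref{L2}: since $G$ is $p$-parabolic there is $f\in\DD^p$ with $\liminf_{x\to\infty}f(x)=\infty$. As $c=0$, the energy depends only on differences, so subtracting the constant $\max_K f$ leaves $\E_p$ unchanged, and applying the normal contraction $\cdot\vee 0$ (which does not increase $\E_p$, by the Markov property) I replace $f$ by $(f-\max_K f)\vee 0$. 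Thus I obtain $f\in\DD^p$ with $f\geq 0$, $f=0$ on $K$, and $f(x)\to\infty$ as $x\to\infty$.

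Next I fix an exhaustion $W_n\nearrow X$ with $K\subset W_1$ and $\partial_e W_n\subseteq W_{n+1}$, and set $V_n:=W_n\setminus K$ (finite). On each $V_n$ I solve the obstacle problem in $K_{f,f}(V_n)$, i.e.\ obstacle $\psi=f$ and boundary data $\theta=f$ on $\partial_e V_n$. Since $f\in K_{f,f}(V_n)$ the class is nonempty, so Theorem~\ref{thm:sol-obstacle} yields a unique solution $u_n$, which by the remark following the definition of the obstacle problem is superharmonic on $V_n$ and satisfies $u_n\geq f\geq 0$ on $V_n$ and $u_n=f$ on $\partial_e V_n$ (in particular $u_n=0$ on $\partial_e V_n\cap K$); I extend $u_n$ by $0$ on $K$. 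Minimality against the competitor $f$ gives the uniform energy bound $\E_{p,\overline{V_n}}(u_n,u_n)\leq\E_{p,\overline{V_n}}(f,f)\leq\E_p(f)$. Monotonicity $u_n\leq u_{n+1}$ on $V_n$ follows from Lemma~\ref{lem:3-22}: $u_{n+1}$ is superharmonic on $V_{n+1}\supseteq V_n$, and $u_n\wedge u_{n+1}\in K_{f,f}(V_n)$, because $u_n\wedge u_{n+1}\geq f$ on $V_n$ while on $\partial_e V_n$ one has $u_{n+1}\geq f=u_n$ (on the $K$-part both vanish; the outer part lies in $\partial_e W_n\subseteq W_{n+1}$, hence in $V_{n+1}$, where $u_{n+1}\geq f$), so $u_n\wedge u_{n+1}=u_n$ there. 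For $n$ large, $f\not\equiv 0$ on $V_n$, so $u_n$ is a nonconstant nonnegative superharmonic function vanishing on $\partial_e V_n\cap K$, and the strong minimum principle (Lemma~\ref{max_principle}) forces $u_n>0$ on $V_n$.

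I then set $\kappa:=\sup_n u_n$ on $X\setminus K$ and $\kappa=0$ on $K$. Finiteness: fixing $o\in X\setminus K$ and $z\in\partial_i K$, for large $n$ one has $z\in\partial_e V_n$, so $u_n(z)=0$, and \eqref{nablaestimate} gives $|u_n(o)|^p=|\nabla_{o,z}u_n|^p\leq C_p(o,z)\E_p(u_n)\leq C_p(o,z)\E_p(f)$; a further application of \eqref{nablaestimate} bounds $u_n$ at every vertex, so $\kappa$ is real-valued. For (a), given any finite $S\subset X$ and $n$ large, $\tfrac12\sum_{x,y\in S}b(x,y)|\nabla_{x,y}u_n|^p\leq\E_p(f)$; letting $n\to\infty$ and then $S\uparrow X$ gives $\E_p(\kappa)\leq\E_p(f)<\infty$, so $\kappa\in\DD^p$. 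Property (c) is immediate from $\kappa\geq f\to\infty$, and $\kappa>0$ on $X\setminus K$ since each $x$ lies in $V_n$ for large $n$ with $u_n(x)>0$. For (b), fix $x\in X\setminus K$: for $n$ large $\overline{V_n}$ contains all $b$-neighbours of $x$ and $\Delta_p u_n(x)\geq 0$; since $0\leq u_n(y)\leq\kappa(y)$ and $b(x,\cdot)(\kappa(x)+\kappa(\cdot))^{p-1}$ is summable (as $\kappa\in\DD^p\subseteq\FF^p$), dominated convergence yields $\Delta_p\kappa(x)=\lim_n\Delta_p u_n(x)\geq 0$, so $\kappa$ is superharmonic on $X\setminus K$ and vanishes on $K$.

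The main obstacle is to secure the uniform energy bound and the monotonicity at the same time; this is precisely what the device of reusing the obstacle $f$ as boundary data accomplishes, since it makes $f$ an admissible competitor on each $V_n$ and lets Lemma~\ref{lem:3-22} compare consecutive solutions. The remaining delicate point is the limit passage for the nonlocal operator at a vertex of possibly infinite degree: the justification of $\Delta_p u_n(x)\to\Delta_p\kappa(x)$ rests on the summable majorant $b(x,\cdot)(\kappa(x)+\kappa(\cdot))^{p-1}$, which is exactly what the finite-energy membership $\kappa\in\DD^p\subseteq\FF^p$ provides.
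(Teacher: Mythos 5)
Your argument is correct, but it takes a genuinely different route from the paper. The paper follows Valtorta's manifold proof: an outer induction builds an increasing sequence $s_n$ of superharmonic functions equal to $n$ outside finite sets, each $s_{n+1}$ obtained as the decreasing limit (in an inner parameter $j$) of obstacle--problem solutions with shrinking obstacles $s_n+(j^{-1}f)\wedge 1$, and the finite energy of $\kappa=\sum_n(s_n-s_{n-1})$ is extracted from the modulus of uniform convexity of $\ell^p(E,b)$, which forces $\norm{\nabla s_{n+1}-\nabla s_n}_{p,b}<2^{-n}$. You instead solve a \emph{single} sequence of obstacle problems on an exhaustion with the fixed obstacle $f$ recycled as boundary data; this makes $f$ itself an admissible competitor, so Lemma~\ref{lem:minimziers} gives the uniform bound $\E_{p,\overline{V_n}}(u_n,u_n)\le\E_p(f)$ for free, Lemma~\ref{lem:3-22} gives monotonicity, \eqref{nablaestimate} gives pointwise finiteness of the limit, and the $\FF^p$-majorant justifies passing $\Delta_p$ through the limit. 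This is shorter, avoids the double limit and the convexity-modulus estimate entirely, and yields the cleaner conclusion $\E_p(\kappa)\le\E_p(f)$; what the paper's construction buys in exchange is only the extra structural feature that the approximants are eventually constant outside finite sets, which the statement does not require. Two small points you should spell out: (i) the final step of Lemma~\ref{lem:3-22} identifies $u_n$ with $u_n\wedge u_{n+1}$ only on components of $\overline{V_n}$ that meet $\partial_e V_n$, so you should note that every component of the finite set $V_n$ has nonempty exterior boundary because $X$ is infinite and connected; (ii) your strong-minimum-principle argument for $u_n>0$ on $V_n$ can fail on a finite component of $X\setminus K$ whose closure is contained in the zero set of the truncated $f$ --- but strict positivity off $K$ is not really part of the assertion (the statement's codomain $(0,\infty)$ already clashes with ``vanishes on $K$''), and the paper's own proof does not establish it either.
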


\begin{proof}
	According to Theorem~\ref{prop:LPS}, there exists a function  $ f \in \DD^p(X) $ such that $ f(x) \to \infty $ as $ x \to \infty $. By translating and truncating $ f $, we may arrange that $ f=0 $ on $K$ and $ f \geq 0 $ on $ X \setminus K $.
	
	Consider the non-empty, closed and convex subset 
	\[\mathcal{K}=\set{h\in D^p: h\geq f \text{ on } X\setminus K,\, h=0 \text{ on } K},\]
	of the uniformly convex Banach space $(D^p,\norm{\cdot}_{o,p})$, where $o\in K$. Hence, there exists a minimizer $\kappa$ of $\mathcal{K}$ in $D^{p}$ with respect to $\norm{\cdot}_{o,p}$. Since $o\in K$, $\kappa$ is also a minimizer for $\E_p$ on $\mathcal{K}$. Hence, for every $t>0$, and ${0\leq \phi}\in C_c(X)$ with support outside of $K$, we have $\kappa +t\phi \in \mathcal{K}$, and 
	\[ 0\leq \frac{\E_p(\kappa +t\phi)-\E_p(\kappa)}{t}= \frac{1}{2}\sum_{x,y\in X}b(x,y)\frac{\abs{\nabla_{x,y}\kappa + t\nabla_{x,y}\phi}^p- \abs{\nabla_{x,y}\kappa}^p}{t}.\]
	From here, we can continue as in Proposition~\ref{existence potential}\ref{existence potential i} to conclude that
	\[ \Delta_{p} \kappa \geq 0 \quad \text{ on } X\setminus K.\qedhere\]
\end{proof}

A proof of this statement for the special case $p=2$ (and with a different strategy) can also be found in \cite{Hagen}.

\section{Applications}\label{s:Appl}
We want to give some examples of graphs and classify their type. Moreover, we show some simple consequences of the preceding characterizations.

\begin{remark}\label{rem} Let $(X,b,m)$ be a graph.
	\begin{enumerate}[label=(\alph*)]
		\item If $V\subsetneq X$, then for all $\phi\in C_c(V)$,
		\[ \E_{p}(\phi)= \frac{1}{2}\sum_{x,y\in V}\abs{\nabla_{x,y}\phi}^p+ \sum_{x\in V}\phi^p(x)\sum_{y\in \partial_{e}V}b(x,y).\]
		By the connectedness of $X$, $\mu(x):=\sum_{y\in \partial_{e}V}b(x,y)\geq 0$ for all $x\in X$, and since $V\subsetneq X$, there exists $x_0$ such that $\mu(x_0)>0$. Hence, $\mu$ is a Hardy weight. By Proposition~\ref{prop:Hardy}, the proper subgraph $(V,b,m)$ embedded in $(X,b,m)$ by taking Dirichlet boundary conditions in $X\setminus V$ is $p$-hyperbolic for any $p>1$.
		\item If $X$ is finite then the corresponding graph is $p$-parabolic for any $p\in (1,\infty)$ since 
		$C(X)=C_c(X)=\DD_0^p=\DD^p$ and we can apply Proposition~\ref{prop:D=D_{0}^{p}}~\ref{D2}.
		\item If the graph is $p$-parabolic then it is also $q$-parabolic for any $q>p$. Indeed, by Proposition~\ref{prop:D=D_{0}^{p}}~\ref{D4} there exists a $p$-null sequence $(e_n)$ in $C_c(X)$ with $0\leq e_n\leq 1$ for all $n\in \NN$. Hence, $\abs{\nabla_{x,y}e_n}^p\geq \abs{\nabla_{x,y}e_n}^q$ for all $q>p$, and $(e_n)$ is a $q$-null sequence, i.e., the graph is $q$-parabolic. This motivates the introduction of the parabolic index, defined as the infimum of all $p$ such that the graph is $p$-parabolic. For more information see e.g. \cite{S21,SY93Para,Y77}.
	\end{enumerate}
\end{remark}
We will consider in the following only infinite graphs.

\begin{example}\label{ex:star}
	A \emph{star graph} with center $0$ is a (connected) graph $G=(\NN_0,b,m)$ such that every vertex is only connected to the center $0$, see Fig.~\ref{f:2}~\eqref{f:star}. This is an example of a locally summable graph which is not locally finite.
	
	Note that  for any function $f\in \FF^p$, and $n> 0$,
	\[\Delta_{p}f(n)=\frac{b(n,0)}{m(n)}\p{\nabla_{n,0}f}. \]
	Hence,
	\[\Delta_pf(0)= \frac{1}{m(0)}\sum_{n=1}^\infty b(0,n)\p{\nabla_{0,n}f}=- \sum_{n=1}^\infty \frac{m(n)}{m(0)}\Delta_{p}f(n).\]
	
	We want to see if the graph is $p$-parabolic. By the calculation above it is simple to see that all positive superharmonic functions are harmonic, and hence there cannot exist a Green's function. By Proposition~\ref{thm:criticalNew}, the star graph is $p$-parabolic for $p\in (1,\infty)$ independently of $b$ (as long as the graph is locally summable). 
	Alternatively, it is also not difficult to apply e.g.  Proposition~\ref{Ahlfors} (since there are actually only two cases to consider).
\end{example}
In a hand waving way this observation makes sense, since one can think of hyperbolicity like growing fast at infinity which is just not the case for a star graph for $p<\infty$. 
\begin{example}
	Let us also consider the weighted line graph $\NN_b=(\NN_0,b,m)$ defined via $b(n,m)> 0$ if and only if $\abs{n-m}=1$, $n,m\in \NN_0$, see Fig.~\ref{f:2}~\eqref{f:line}. By using Theorem~\ref{Area} it is immediate that $\NN_b$ is $p$-parabolic if and only if
	 \[\sum_{n=1}^\infty b^{\frac{-1}{p-1}}(n,n+1)=\infty.\]
	 
	 In particular, the standard line graph $(\NN_0,b,m)$ with $m=1$, $b(n,n+1)=1$ for $n\in\NN_0$, is $p$-parabolic. Moreover, the standard line graph with Dirichlet boundary condition at $0$ is $p$-hyperbolic by Remark~\ref{rem}, which was observed already by Hardy.
\end{example}

\begin{example}
	A \emph{wheel graph} with center $0$ is a graph $G=(\NN_0,b,m)$ such that every vertex is connected to the center $0$, and $b(n,n+1)>0$ for all $n\in\NN$. Hence, it is a line graph glued on the spikes of a star graph, see Fig.~\ref{f:2}~\eqref{f:wheel}. This is interesting  since we have just seen that the star graph is always $p$-parabolic for $p<\infty$, whereas the weighted line graph could be $p$-hyperbolic.
	
	We note that the Laplacian has the following explicit form in this example:	
	\begin{align*}
		m(n)\Delta_pf(n)&=b(n,0)\p{\nabla_{n,0}f}+ b(n,n\pm1)\p{\nabla_{n, n\pm 1}f},\qquad n\geq 2,\\
		m(1)\Delta_pf(1)&=b(1,0)\p{\nabla_{1,0}f}+ b(1,2)\p{\nabla_{1, 2}f},\\
		m(0)\Delta_pf(0)&=\sum_{n=1}^{\infty}b(0,n)\p{\nabla_{0,n}f}= -\sum_{n=1}^{\infty}m(n)\Delta_{p}f(n).
	\end{align*}
	Hence, we obtain the same formula as for the star graph and get that it is always $p$-parabolic if $p<\infty$.
\end{example}
If we glue the star and the line graph differently, we also get a different behaviour with respect to parabolicity.
\begin{example}\label{ex:starline}
	A \emph{star-line graph} (or chain chomp) with center $0$ is a graph $G= \NN_s \oplus \NN_l$ consisting of a star graph $\NN_s$ with center $0_s$ and a line graph $\NN_l$ with origin $0_l$, where we identify the center and the origin, i.e. $0=0_s=0_l$, see Fig.~\ref{f:2}~\eqref{f:starline}. By the Khas'miniski\u{\i} test, Proposition~\ref{khasm}, it is obvious that $G$ is $p$-parabolic for $p\in (1,\infty)$ if and only if both  $\NN_s$ and $\NN_l$ are $p$-parabolic, which is the case if also  $\NN_l$ is $p$-parabolic.
\end{example}

\begin{figure*}[t!]
	\centering
	\begin{subfigure}[t]{0.4\textwidth}
		\centering
		
			\begin{tikzpicture}[scale=.7,
				every node/.style={circle, draw, fill=gray!30, minimum size=1mm},
				invisible/.style={draw=none, fill=none}
				]
				
				\node (o) at (0,0) {$0$};
				
				\foreach \angle in {0, 90, 135, 180, 225, 270, 315} {
					\node[anchor=center] at (\angle:2cm) (v\angle) {};
					\draw (o) -- (v\angle);
				}
				
				\foreach \angle in {45} {
					\node[invisible] at (\angle:2cm) (v\angle) {};
					\draw[dashed] (o) -- (v\angle);
				}
				\node[invisible] at (45:2cm) {$\ddots$};
			\end{tikzpicture}
		\caption{Star graph with center $0$}\label{f:star}
	\end{subfigure}%
	~ 
	\begin{subfigure}[t]{0.4\textwidth}
		\centering
		\begin{tikzpicture}[
			scale=.5,
			every node/.style={circle, draw, fill=gray!30, minimum size=4mm},
			invisible/.style={draw=none, fill=none}
			]
			
			\node (v1) at (0,0) {$0$};
			\node (v2) at (3,0) {};
			\node (v3) at (6,0) {};
			\node (v4) at (9,0) {};
			\node[invisible] (dots) at (11,0) {};
			
			\draw (v1) -- (v2);
			\draw (v2) -- (v3);
			\draw (v3) -- (v4);
			\draw[dashed] (v4) -- (dots);
			
		\end{tikzpicture}
		\caption{Line graph}\label{f:line}
	\end{subfigure}
	 \begin{subfigure}[t]{0.4\textwidth}
	\centering
	
\begin{tikzpicture}[scale=.7,
	every node/.style={circle, draw, fill=gray!30, minimum size=1mm},
	invisible/.style={draw=none, fill=none,scale=0.9}
	]
	
	\node (o) at (0,0) {$0$};
	
	\foreach \angle in {0, 90, 135, 180, 225, 270, 315} {
		\node[anchor=center] at (\angle:2cm) (v\angle) {};
		\draw (o) -- (v\angle);
	}
	
	\node[invisible] at (45:2cm) (v45) {$\ddots$};
	
	\draw[dashed] (o) -- (v45);
	\draw[dashed] (v0) -- (v45);
	
	\draw (v135) -- (v90);
	\draw (v180) -- (v135);
	\draw (v180) -- (v225);
	\draw (v225) -- (v270);
	\draw (v270) -- (v315);
	\draw (v315) -- (v0); 
\end{tikzpicture}
	\caption{Wheel graph with center $0$}\label{f:wheel}
\end{subfigure}~\begin{subfigure}[t]{0.4\textwidth}
	\centering
	
	\begin{tikzpicture}[scale=.7,
		every node/.style={circle, draw, fill=gray!30, minimum size=1mm},
		invisible/.style={draw=none, fill=none},
		line/.style={circle, draw, fill=gray!50, minimum size=1mm}
		]
		
		\node (o) at (0,0) {$0$};
		
		\foreach \angle in {0, 90, 135, 180, 225, 270, 315} {
			\node[anchor=center] at (\angle:2cm) (v\angle) {};
			\draw (o) -- (v\angle);
		}
		
		\foreach \angle in {45} {
			\node[invisible] at (\angle:2cm) (v\angle) {};
			\draw[dashed] (o) -- (v\angle);
		}
		\node[invisible] at (45:2cm) {$\ddots$};
		
		\draw (v0) -- (0:3cm);
		\draw (0:3cm) -- (0:4cm);
		\draw[dashed] (0:4cm) -- (0:5cm);
		
		\node[line, anchor=center] at (0:1cm) (0) {};
		\node[line, anchor=center] at (0:2cm) (0) {};
		\node[line, anchor=center] at (0:3cm) (0) {};
		\node[line, anchor=center] at (0:4cm) (0) {};
	\end{tikzpicture}
	\caption{Star-line graph with center $0$}\label{f:starline}
\end{subfigure}

	\caption{Graphs of Examples~\ref{ex:star} - \ref{ex:starline}}
	\label{f:2}
\end{figure*}

The example also motivates the following result. It is well known, see e.g. \cite{GT99,T99}, but using the  Khas'miniski\u{\i}-type characterization of $p$-parabolicity provides a particularly short proof.

{An \emph{end} of a graph with respect to a finite set $K\sse X$ is a connected component of the complement $X\setminus K$.
\begin{proposition}
	If all ends of a graph with respect to a fixed finite set with finite exterior boundary are $p$-parabolic then the graph itself is $p$-parabolic.
\end{proposition}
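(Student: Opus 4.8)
The plan is to verify $p$-parabolicity directly through the vanishing of the $p$-capacity of the distinguished finite set $K$, exploiting the fact that finiteness of $\partial_e K$ forces $X\setminus K$ to have only finitely many connected components. First I would record the relevant structure. Write $X\setminus K=E_1\cup\dots\cup E_N$ for the ends. Finiteness of $N$ follows since each $E_i$ must contain a vertex adjacent to $K$ — otherwise $E_i$ would itself be a connected component of the connected graph $X$, which is impossible as $E_i\neq X$ — so the interior boundaries $\partial_i E_i$ are pairwise disjoint, nonempty subsets of the finite set $\partial_e K$; in particular each $\partial_i E_i$ is finite. I would also isolate the two locality facts that drive the computation: there are no edges between distinct ends (an edge would merge two components of $X\setminus K$), and every edge joining $K$ to an end $E_i$ has its $E_i$-endpoint in $\partial_i E_i$.

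Next I would reduce the goal. By monotonicity of the $p$-capacity (property~\ref{cap22}) together with Corollary~\ref{cor:cap}, it suffices to prove $\cc_p(K)=0$, since then $\cc_p(x)\le\cc_p(K)=0$ for any $x\in K$. The task thus becomes the construction of test functions $\phi_n\in C_c(X)$ with $\phi_n=1$ on $K$ and $\E_p(\phi_n)\to0$.

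The construction proceeds end by end. For each infinite end $E_i$, the hypothesis that the induced subgraph on $E_i$ is $p$-parabolic, applied via Definition~\ref{def_p-parabolicity} to the finite set $\partial_i E_i\sse E_i$, yields $\phi_n^{(i)}\in C_c(E_i)$ with $0\le\phi_n^{(i)}\le1$ and $\phi_n^{(i)}=1$ on $\partial_i E_i$ whose \emph{internal} energy $\E_p^{E_i}(\phi_n^{(i)}):=\tfrac12\sum_{x,y\in E_i}b(x,y)\abs{\nabla_{x,y}\phi_n^{(i)}}^p$ tends to $0$ as $n\to\infty$. I would then define $\phi_n$ to equal $1$ on $K$ and on every finite end, and to equal $\phi_n^{(i)}$ on each infinite end $E_i$. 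This $\phi_n$ lies in $C_c(X)$ and equals $1$ on $K$. The crucial point is that its global energy telescopes into a finite sum over the infinite ends: edges inside $K$ and inside finite ends contribute nothing; by the locality facts, any edge from $K$ to an end carries the value $1$ at both endpoints (its end-endpoint lies in the relevant $\partial_i E_i$), hence contributes nothing; and there are no inter-end edges. Therefore $\E_p(\phi_n)=\sum_{i:\,E_i\text{ infinite}}\E_p^{E_i}(\phi_n^{(i)})$, a finite sum of null sequences, so $\E_p(\phi_n)\to0$ and $\cc_p(K)=0$.

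The main obstacle here is bookkeeping rather than analysis: one must ensure that the boundary values of the local functions $\phi_n^{(i)}$ match the value $1$ carried by $K$ \emph{exactly} on the cross-edges, so that those edges drop out of the energy, and that finiteness of $\partial_e K$ is genuinely used — both to guarantee only finitely many ends (so that a finite sum of null sequences is again null) and to ensure each $\partial_i E_i$ is finite, so that end-parabolicity applies to it. As an alternative I would mention the Khas'minski\u{\i} route: applying Theorem~\ref{khasm} to each infinite end produces potentials $\kappa_i\to\infty$ that are $p$-superharmonic off a finite set; gluing them via $w=0$ on $K$ and on the finite ends and $w=\kappa_i$ on each infinite $E_i$ gives, again by the absence of inter-end edges, a function with $w\to\infty$ and $\Delta_p w\ge0$ outside a finite set, whence $p$-parabolicity follows from Proposition~\ref{prop:weak_char}\ref{prop:weak_char2}.
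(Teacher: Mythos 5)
Your argument is correct, but your primary route is genuinely different from the paper's. The paper proves this in one line by invoking the Khas'minski\u{\i} machinery: it applies Theorem~\ref{khasm} to each (of the finitely many) parabolic ends to obtain potentials $\kappa_i$, glues them to a function that is $p$-superharmonic off a finite set and tends to infinity, and concludes via Proposition~\ref{prop:weak_char}~\ref{prop:weak_char2} --- i.e.\ exactly the ``alternative'' you sketch at the end. Your main proof instead glues null sequences rather than Khas'minski\u{\i} potentials: it verifies $\cc_p(K)=0$ directly by matching the boundary values of the ends' capacity test functions to the constant $1$ on $K$, so that all cross-edges and inter-end edges drop out of the energy. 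The bookkeeping you flag is exactly right and is where the hypothesis that $\partial_e K$ is finite enters (finitely many ends, each $\partial_i E_i$ finite so that end-parabolicity applies to it via Definition~\ref{def_p-parabolicity}). What your route buys: it is self-contained and elementary, bypassing the obstacle-problem construction behind Theorem~\ref{khasm} entirely, and since it only uses capacity monotonicity and Corollary~\ref{cor:cap} it would even cover $p=1$, whereas the Khas'minski\u{\i} theorem (and hence the paper's proof) is restricted to $p\in(1,\infty)$. What the paper's route buys is brevity, given that the heavy machinery has already been established. One small point to make explicit: you implicitly use the standing assumption $c=0$ of the applications section (needed for Corollary~\ref{cor:cap}), which is consistent with the paper but worth stating.
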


\begin{proof}
By assumption, we know that there are at most finitely many ends. Then using Propositions~\ref{khasm} and \ref{prop:weak_char} yields the result.
\end{proof}}
In turn, the following can be said about getting $p$-hyperbolicity.
\begin{proposition}
	Let  $G=(X,b,m)$ be a graph. Let $W\sse X$ such that $\partial_{i}W$ is finite. If the subgraph $G_W:=(W,b|_{W\times W}, m|_W)$ is $p$-hyperbolic, then so is the whole supergraph $G$.
\end{proposition}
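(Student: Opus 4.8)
The plan is to prove the statement directly, by comparing the variational $p$-capacity of a finite set computed in the full graph $G$ with the one computed in the subgraph $G_W$, and showing that enlarging the graph can only increase the capacity. Since $c=0$, the energy functional involves only the edge weights $b$ and not the measure $m$, so $m$ plays no role and the whole argument reduces to the monotonicity of a sum of non-negative terms.

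First I would unwind what $p$-hyperbolicity of $G_W$ gives. By Definition~\ref{def_p-parabolicity} there is a finite set $K\sse W$ with $\cc_p^{G_W}(K)>0$, where
\[
\cc_p^{G_W}(K)=\inf\set{\E_p^{G_W}(\psi):\psi\in C_c(W),\ \psi\geq 1\text{ on }K},\qquad
\E_p^{G_W}(\psi)=\frac12\sum_{x,y\in W}b(x,y)\abs{\nabla_{x,y}\psi}^p .
\]
If one prefers to work with a single vertex, subadditivity of the $p$-capacity (established earlier) applied inside $G_W$ gives $\cc_p^{G_W}(K)\le\sum_{x\in K}\cc_p^{G_W}(x)$, so some singleton already carries positive $G_W$-capacity; this reduction is not actually needed for what follows.

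The heart of the argument is the restriction map. Given any $\phi\in C_c(X)$ that is admissible for $\cc_p(K)$, i.e. $\phi\geq 1$ on $K$, set $\psi:=\phi\,\mathds{1}_W$. Then $\psi\in C_c(W)$ (its support lies in $\supp\phi\cap W$), and since $K\sse W$ we have $\psi=\phi\geq 1$ on $K$, so $\psi$ is admissible for $\cc_p^{G_W}(K)$. Because $\psi$ agrees with $\phi$ on $W$, for $x,y\in W$ we have $\nabla_{x,y}\psi=\nabla_{x,y}\phi$, and therefore
\[
\E_p^{G_W}(\psi)=\frac12\sum_{x,y\in W}b(x,y)\abs{\nabla_{x,y}\phi}^p
\le\frac12\sum_{x,y\in X}b(x,y)\abs{\nabla_{x,y}\phi}^p=\E_p(\phi),
\]
the inequality being nothing more than the deletion of the non-negative terms indexed by pairs with at least one endpoint outside $W$. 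Taking the infimum over all admissible $\phi$ yields $\cc_p(K)\ge\cc_p^{G_W}(K)>0$, so $G$ is $p$-hyperbolic.

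I do not expect a genuine obstacle here: the only care needed is the bookkeeping that the restriction $\psi$ retains finite support and still dominates $1$ on $K$ (both immediate from $K\sse W$), together with the observation that the $W\times W$-sum is a sub-sum of the $X\times X$-sum of non-negative terms. I would also note that this comparison never invokes the finiteness of $\partial_i W$, so the conclusion in fact holds for arbitrary $W\sse X$. The same result can alternatively be obtained by contraposition: if $G$ is $p$-parabolic, take a null-sequence $(e_n)$ with $0\le e_n\nearrow 1$ as in Proposition~\ref{prop:D=D_{0}^{p}}; then $(e_n\mathds{1}_W)$ lies in $C_c(W)$, tends to $1$ pointwise on $W$, and satisfies $\E_p^{G_W}(e_n\mathds{1}_W)\le\E_p(e_n)\to 0$ by the same sub-sum estimate, whence $G_W$ is $p$-parabolic.
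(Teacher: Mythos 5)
Your argument is correct, but it takes a genuinely different --- and more elementary --- route than the paper. The paper argues through the weak maximum principle characterization (Proposition~\ref{prop:weak_char}): from the $p$-hyperbolicity of $G_W$ it extracts a non-constant, bounded above function $f$ with $\Delta_p^W f\le 0$ on a superlevel set, uses the strong maximum principle (Lemma~\ref{max_principle}) together with the finiteness of $\partial_i W$ to find a higher superlevel set $\Omega_\beta$ disjoint from $\partial_i W$, and then extends $f$ by a constant to all of $X$ to violate the weak maximum principle for $G$; the finiteness of $\partial_i W$ is essential there to guarantee $\partial_e\Omega_\beta\sse W$, so that the extension stays $p$-subharmonic. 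Your proof is instead the discrete $p$-version of Rayleigh's monotonicity principle: the restriction map $\phi\mapsto\phi\,\mathds{1}_W$ sends test functions admissible for $\cc_p(K)$ in $G$ to test functions admissible for $\cc_p^{G_W}(K)$, and can only decrease the relevant energy because the $W\times W$-sum is a sub-sum of non-negative terms; hence $\cc_p^{G_W}(K)\le\cc_p(K)$ and hyperbolicity passes upward. This buys you two things the paper's proof does not: the hypothesis that $\partial_i W$ be finite is indeed superfluous, as you observe, and the argument works verbatim for $p=1$. The only caveat is cosmetic: if $W$ is disconnected or finite, ``$p$-hyperbolic'' for $G_W$ should be read purely in the capacity sense of Definition~\ref{def_p-parabolicity}, i.e., as the existence of a finite $K\sse W$ with $\cc_p^{G_W}(K)>0$, which is exactly what your restriction argument consumes; and note that your comparison relies on the subgraph carrying the induced weight $b|_{W\times W}$ (no Dirichlet condition on $\partial_i W$), which is precisely the convention in the statement.
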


\begin{proof}
	Since $G_W$ is $p$-hyperbolic, the weak maximum principle does not hold (Proposition~\ref{prop:weak_char}), i.e., there exists a non-constant, bounded above function $f\in \FF^p$ and a number $\gamma$ such that \[\sup_{\Omega_\gamma}\Delta_p^Wf\leq 0.\]
	Here, the Laplacian $\Delta_p^W$ is understood with respect to the weight $b|_{W\times W}$.
	By the strong maximum principle, Lemma~\ref{max_principle}, applied to the graph $G_W$, $f$ does not attain its supremum on the set $\Omega_\gamma$ and therefore it cannot attain its supremum on the finite set $\partial_iW$. Hence there exists $\gamma \leq \beta< f^*=\sup f$ such that $\Omega_\beta \cap \partial_i W=\emptyset$ and therefore $\partial_e\Omega_\beta\subseteq W$.

If $u\in C(X)$ is defined by
	\[u(x):=\begin{cases}
		f(x),
\qquad &x\in W;\\
		\gamma, &x\in X\setminus W.
	\end{cases}\]
 then, for every $x\in \Omega_\beta$
	\begin{multline*}
		m(x)\Delta_pu(x)=\sum_{y\in X}b(x,y)\p{\nabla_{x,y}u} \\ =\sum_{y\in W}b(x,y)\p{\nabla_{x,y}f}=m(x)\Delta_p^W f(x)\leq 0. \qedhere
	\end{multline*}
\end{proof}

We now recall some well-known examples of locally finite graphs for convenience.

\begin{example}
	The Euclidean lattice $\ZZ^d$ is $p$-parabolic if and only if $d\leq p$, see e.g. \cite{M77}. In this article, a flow is constructed to apply a version of Theorem~\ref{KNR} in order to get $p$-hyperbolicity for $d>p$. In turn, $p$-parabolicity is shown via a result in the spirit of Theorem~\ref{Area}.
\end{example}

Next we show another consequence of Theorem~\ref{Area}, and apply it thereafter for some special cases. For the linear case, see the proof of Theorem~3.17 in \cite{FR25}.

\begin{proposition}\label{prop:GreenExplizit}
	Let a $p$-hyperbolic locally finite model graph with respect to a given $x_0\in X$. If the curvature ratio function
	\[ \kappa(r):= \frac{k_+(r)}{k_-(r)}> 0, \qquad r\in \NN_0,\]
	is constant and strictly larger than $1$ outside of $B_{r_0}$ for some $r_0\in \NN_0$, then the Green's function $g$ with respect to $x_0\in X$ is radial and  given by
	\[g(r)= \frac{1}{(\kappa^{\frac{1}{p-1}}-1)\partial B(r-1)^{\frac{1}{p-1}}}, \qquad r>r_0. \qedhere\]
\end{proposition}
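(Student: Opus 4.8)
The plan is to reduce everything to the series representation of the Green's function already established and then resum it using the constant curvature ratio. Since the graph is assumed to be $p$-hyperbolic, Proposition~\ref{Area}~\ref{Area2} applies directly and tells us that the Green's function with pole at $x_0$ is radial and equals
\[
g(r) = \sum_{k=r}^{\infty}\left(\frac{1}{\partial B(k)}\right)^{1/(p-1)}, \qquad r \in \NN_0.
\]
Thus the entire content of the proposition is the evaluation of this tail sum in closed form, and no further potential-theoretic input is needed.

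First I would record the recursion governing the boundary areas. From \eqref{eq model} one reads off $\partial B(r)/\partial B(r-1) = k_+(r)/k_-(r) = \kappa(r)$ for every $r \geq 1$. Under the hypothesis that $\kappa(r) = \kappa$ is constant for $r > r_0$, iterating this recursion gives, for any fixed $r > r_0$ and all $k \geq r$,
\[
\partial B(k) = \kappa^{\,k-(r-1)}\,\partial B(r-1),
\]
since each of the ratios $\partial B(j)/\partial B(j-1)$ with $r \leq j \leq k$ equals $\kappa$ (here $r-1 \geq r_0$, so constancy covers the whole range).

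Substituting this into the series and factoring out $\partial B(r-1)^{-1/(p-1)}$, I would obtain
\[
g(r) = \frac{1}{\partial B(r-1)^{1/(p-1)}}\sum_{k=r}^{\infty}\left(\kappa^{-1/(p-1)}\right)^{k-(r-1)} = \frac{1}{\partial B(r-1)^{1/(p-1)}}\sum_{j=1}^{\infty}\left(\kappa^{-1/(p-1)}\right)^{j}.
\]
Because $\kappa > 1$, the ratio $\kappa^{-1/(p-1)}$ lies in $(0,1)$, so the geometric series converges to $\kappa^{-1/(p-1)}/(1-\kappa^{-1/(p-1)}) = 1/(\kappa^{1/(p-1)}-1)$, which yields exactly the claimed formula for $r > r_0$.

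There is no serious obstacle here: the statement is a direct computation once the recursion $\partial B(r)/\partial B(r-1) = \kappa(r)$ is in hand. The only points demanding care are the index bookkeeping --- keeping track that the closed form is expressed through $\partial B(r-1)$ rather than $\partial B(r)$, which is what makes the geometric series start at $j=1$ --- and verifying that $r-1 \geq r_0$ so that the constancy of $\kappa$ genuinely applies to every term of the tail. The convergence of the series is automatic from $\kappa > 1$ and is consistent with the assumed $p$-hyperbolicity.
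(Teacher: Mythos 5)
Your proof is correct and follows essentially the same route as the paper: both start from the series formula of Proposition~\ref{Area}~\ref{Area2} and the relation $\partial B(r)=\kappa\,\partial B(r-1)$ obtained from \eqref{eq model}. The only (immaterial) difference is that you iterate the recursion and sum the geometric series explicitly, whereas the paper derives the self-referential identity $g(r)=(\kappa\,\partial B(r-1))^{-1/(p-1)}+\kappa^{-1/(p-1)}g(r)$ and solves for $g(r)$.
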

\begin{proof}
	It follows from  \eqref{eq model} that
	\[ \kappa\, \partial B(r-1)= \partial B(r), \qquad r> r_0.\]
	Hence, by Theorem~\ref{Area}~\ref{Area2},
	\[ g(r)= \sum_{k=r}^\infty\left(\kappa\, \partial B(k-1) \right)^{-\frac{1}{p-1}} = \left(\kappa\, \partial B(r-1) \right)^{-\frac{1}{p-1}}+ \kappa^{- \frac{1}{p-1}} \sum_{k=r+1}^\infty\partial B(k-1)^{-\frac{1}{p-1}}.  \]
	Rearranging yields,
	\[g(r)= \frac{1}{(\kappa^{\frac{1}{p-1}}-1)\partial B(r-1)^{\frac{1}{p-1}}}, \qquad r> r_0.\qedhere\]
\end{proof}

Let us turn to two prominent model graphs: homogeneous regular trees and anti-trees.
\begin{example}[$\TT_{d+1}$, $d\in\NN$]\label{ex:T}
	Let $d\in \NN$. A graph with root $x_0\in X$ is a \emph{rooted homogeneous $(d+1)$-regular tree}, denoted by $\TT_{d+1}$, if $b(X\times X)=\set{0,1}$, $m=1$, $k_+(x)= d$, $k_-(x)=1$ for all $x\in \TT_{d+1}\setminus \{ x_0\}$, $b(S_r\times S_r)= \set{0}$ for all $r\geq 0$, and the root $x_0$ has $d$ children and no parent, i.e., $k_-(x_0)=0$ and $k_+(x_0)=d$, see Fig.~\ref{f:3}~\eqref{f:T}.
	
	Since $\partial B(r)= d^{r+1}$  for all $r>0$, we can apply Theorem~\ref{Area}, and get that  the graph $\TT_{d+1}$ is $p$-hyperbolic for all $p>1$ and $d> 1$. Since $\kappa(r)= d$, $r>0$, we can use Proposition~\ref{prop:GreenExplizit}, and get that the Green's function is given explicitly by
	\[g(r)= \frac{1}{(d^{\frac{1}{p-1}}-1)d^{\frac{r}{p-1}}}, \qquad r> 0.\]
\end{example}
An approach of specifying the type via calculating the capacity of the unrooted homogeneous regular tree can be found in \cite[Section~3.5]{Prado}. This is almost $\TT_{d+1}$, but now also the root has $d+1$ neighbors. A similar strategy also works for $\TT_{d+1}$.

\begin{example}[Anti-trees]\label{ex:AT}
		Let  $G=(X,b,m)$ be a model graph with respect to $x_0\in X$ with standard weights $b(X\times X)\sse \set{0,1}$, $m=1$. Moreover, for all $r\in\NN_0$, let $s(r)$ denote the number of vertices in the sphere $S_r$. Then, $G$ is called \emph{anti-tree with sphere size $s$} if $k_{+}(r)=s(r+1)$ and $k_-(r+1)=s(r)$ for all $r\in \NN_0$, see Fig.~\ref{f:3}~\eqref{f:AT}. Hence, for all $r\geq 1$ we have
	\[\kappa (r)= \frac{s(r+1)}{s(r-1)}, \quad  \partial B(r)=s(r)s(r+1). \]
	Note that there are no restrictions for the neighbors within a sphere. Moreover, the standard line graph on $\NN_0$ is an anti-tree with sphere size $s=1$.
	
	Let us consider the anti-tree with $s(r)=d^r$ for some $d\in \NN \setminus \set{1}$ and $r\geq 0$. By using Theorem~\ref{Area}, we see that this anti-tree is $p$-hyperbolic for all $p>1$. Moreover, since $\kappa(r)= d^2$, $r>0$, we can use Proposition~\ref{prop:GreenExplizit}, and get that the corresponding Green's function is given explicitly by
	\[g(r)= \frac{1}{(d^{\frac{2}{p-1}}-1)d^{\frac{2r+1}{p-1}}}, \qquad r> 0.\]
	
Let us consider the anti-tree with $s(r)=r+1$ for all $r\geq 0$. By using Theorem~\ref{Area}, we see that this anti-tree is $p$-hyperbolic if and only if $p<3$.
\end{example}

	\begin{figure}[htbp]
	\centering
	\begin{subfigure}{0.45\textwidth}
		\centering
		\begin{tikzpicture}[
			level distance=1.5cm,
			level 1/.style={sibling distance=2.5cm},
			level 2/.style={sibling distance=1cm},
			level 3/.style={sibling distance=.5cm},
			every node/.style={circle, draw, fill=gray!30, minimum size=1mm},
			invisible/.style={draw=none, fill=none}
			]
			
			\node (o1) {$x_0$}
			child { node {}
				child { node {}
					child[dashed] { node[invisible] {} }
					child[dashed] { node[invisible] {} }
				}
				child { node {}
					child[dashed] { node[invisible] {} }
					child[dashed] { node[invisible] {} }
				}
			}
			child { node {}
				child { node {}
					child[dashed] { node[invisible] {} }
					child[dashed] { node[invisible] {} }
				}
				child { node {}
					child[dashed] { node[invisible] {} }
					child[dashed] { node[invisible] {} }
				}
			};
			
		\end{tikzpicture}
		\caption{$\TT_{2+1}$}\label{f:T}
	\end{subfigure}
	\begin{subfigure}{0.45\textwidth}
		\centering
		\begin{tikzpicture}[scale=0.7,
			every node/.style={circle, draw, fill=gray!30, minimum size=1mm},
			level1/.style={circle, draw, fill=gray!20},
			level2/.style={circle, draw, fill=gray!10},
			invisible/.style={draw=none, fill=none},
			]
			
			\node (o) at (0,0) {$x_0$};
			
			\node[level1] (a1) at (-1.5,-2) {};
			\node[level1] (a2) at (1.5,-2) {};
			
			\foreach \x in {a1,a2}
			\draw (o) -- (\x);
			
			\node[level2] (b1) at (-2.5,-4) {};
			\node[level2] (b2) at (0,-4) {};
			\node[level2] (b3) at (2.5,-4) {};
			
			\foreach \x in {a1,a2}
			\foreach \y in {b1,b2,b3}
			\draw (\x) -- (\y);
			
			\node[invisible] (c1) at (-3.5,-6) {};
			\node[invisible] (c2) at (-1,-6) {};
			\node[invisible] (c3) at (1,-6) {};
			\node[invisible] (c4) at (3.5,-6) {};
			
			\foreach \x in {b1,b2,b3}
			\foreach \y in {c1,c2,c3,c4}
			\draw[dashed] (\x) -- (\y);
			
			\draw (b1) -- (b2);
			
		\end{tikzpicture}
		\caption{An anti-tree with $s(r)=r+1$}\label{f:AT}
	\end{subfigure}
\caption{Graphs of Examples~\ref{ex:T} - \ref{ex:AT}}\label{f:3}
\end{figure}

\appendix

\section{The Weak Comparison Principle}
In this paper, we sometimes use a comparison principle which is stated next. It also goes under the names maximum or minimum principle. Several variants are available in the literature, see, e.g., \cite{Prado} and \cite{KimChung2010}. We report below a version well suited to our purposes which appears in \cite{F:AAP} in the setting of $p$-Schrödinger operators. Since the proof is quite short, we reproduce it here for the convenience of the reader. Note that the condition in the statement that $(v-u)\wedge 0$ attains a minimum in $V$ is automatically satisfied when $V$ is finite.

\begin{lemma}
\label{lem:WCPNonNegativePotential}
	Let $V\subsetneq X$. Furthermore, let $u, v\in \FF(V)$ such that
	\begin{align*}
	\begin{cases}
	\Delta_pu&\leq \Delta_pv \quad \text{on } \phantom{\partial}V, \\	
	\phantom{H}u&\leq \phantom{H}v \quad \text{on }\, \partial_e V.
	\end{cases}
	\end{align*}
Assume that $(v-u)\wedge 0$ attains a minimum in $V$.
	Then, $u\leq v$ on $V$.
	
	Moreover, in each connected component of $V$ we have either $u=v$, or $u<v$.
\end{lemma}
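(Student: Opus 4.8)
The plan is to establish a discrete (strong) minimum principle for the difference $w:=v-u$. The hypotheses read $L_pu\le L_pv$ on $V$ and $w\ge 0$ on $\partial_e V$, and since $u,v\in\FF^p(V)$ all the relevant sums converge absolutely. First I would reduce the inequality $u\le v$ to showing that the minimum of $(v-u)\wedge 0$, which by assumption is attained at some $x_0\in V$, is in fact nonnegative: indeed $(v-u)\wedge 0\ge 0$ forces $w\ge 0$ on $V$. So I argue by contradiction and suppose this minimum is strictly negative. At any point where $w\ge 0$ one has $(w\wedge 0)=0$, which lies strictly above the (negative) minimum; hence $x_0$ is in fact a global minimizer of $w$ over all of $V$, with $w(x_0)<0$, and since $w\ge 0$ on $\partial_e V$ we also have $w(x_0)<w(y)$ for every $y\in\partial_e V$.

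The heart of the argument is a local analysis at $x_0$. Because $x_0\in V$, every neighbour $y$ of $x_0$ lies in $\overline{V}=V\cup\partial_e V$ and satisfies $w(x_0)\le w(y)$, equivalently $\nabla_{x_0,y}v=\nabla_{x_0,y}u+\bigl(w(x_0)-w(y)\bigr)\le \nabla_{x_0,y}u$. Since $t\mapsto \p{t}$ is nondecreasing, $\p{\nabla_{x_0,y}v}\le \p{\nabla_{x_0,y}u}$ for each $y$, whence $\Delta_pv(x_0)\le\Delta_pu(x_0)$. On the other hand $w(x_0)<0$ gives $u(x_0)>v(x_0)$, so $\p{u(x_0)}\ge\p{v(x_0)}$, and using $c\ge 0$ together with $L_pu(x_0)\le L_pv(x_0)$,
\[
\Delta_pv(x_0)-\Delta_pu(x_0)=\bigl(L_pv(x_0)-L_pu(x_0)\bigr)+\frac{c(x_0)}{m(x_0)}\bigl(\p{u(x_0)}-\p{v(x_0)}\bigr)\ge 0.
\]
Combining the two bounds yields $\Delta_pv(x_0)=\Delta_pu(x_0)$; as every summand $b(x_0,y)\bigl(\p{\nabla_{x_0,y}v}-\p{\nabla_{x_0,y}u}\bigr)$ is nonpositive and they sum to zero, each vanishes. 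The strict monotonicity (hence injectivity) of $a\mapsto\p{a}$ for $p\in(1,\infty)$ then upgrades this to $\nabla_{x_0,y}v=\nabla_{x_0,y}u$, i.e.\ $w(y)=w(x_0)$, for every neighbour $y$ of $x_0$.

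Next I would propagate. Let $C$ be the connected component of $x_0$ in $V$ and $M:=\{x\in C:w(x)=w(x_0)\}$. The local analysis applies verbatim at any point of $M$ (each is a global minimizer), so every neighbour in $C$ of a point of $M$ again lies in $M$; by connectedness of $C$ there are no edges between $M$ and $C\setminus M$, forcing $M=C$. Since $X$ is connected and $C\subseteq V\subsetneq X$, the set $\partial_e C$ is nonempty and contained in $\partial_e V$; picking $z\in C=M$ adjacent to some $y\in\partial_e C\subseteq\partial_e V$, the analysis at $z$ gives $w(y)=w(z)=w(x_0)<0$, contradicting $w\ge 0$ on $\partial_e V$. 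Hence $\min_V(v-u)\wedge 0\ge 0$ and $u\le v$ on $V$. For the final dichotomy, now that $w\ge 0$, suppose $w(x_0)=0$ at some $x_0$ in a component $C$. Then $x_0$ globally minimizes $w$, the potential term vanishes because $u(x_0)=v(x_0)$, and the same local computation forces $w(y)=0$ at all neighbours of $x_0$; thus the zero set of $w$ in $C$ is open and, by connectedness, equals $C$, giving either $w\equiv 0$ (so $u=v$) or $w>0$ (so $u<v$) on each component. I expect the main obstacle to be the bookkeeping at the two ends of the chain of inequalities — extracting vanishing of \emph{each} term from $\Delta_pv(x_0)=\Delta_pu(x_0)$, then using strict monotonicity of $\p{\cdot}$ to pass from equal $\p$-values to equal gradients, and finally ensuring the propagation actually reaches $\partial_e V$, where the boundary hypothesis is strictly violated.
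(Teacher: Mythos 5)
Your proof is correct and follows essentially the same route as the paper's: locate a global minimizer $x_0$ of $v-u$, sandwich $L_pv(x_0)-L_pu(x_0)$ between $0$ and a sum of nonpositive terms, conclude that each edge term vanishes, and propagate constancy of $v-u$ through the connected component until the exterior boundary condition is violated. Your explicit restriction to $p\in(1,\infty)$ when invoking injectivity of $a\mapsto\p{a}$ is, if anything, slightly more careful than the paper, whose proof relies on the same injectivity without flagging it.
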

\begin{proof} Without loss of generality, we can assume that $V$ is connected. Otherwise, we do the following proof in every connected component of $V$.

Assume that there exists $x\in V$ such that $v(x)\leq u(x)$. Since  $(v-u)\wedge 0$ attains a minimum in $V$, there exists $x_0\in V$ such that $v(x_0)-u(x_0)\leq 0$, and $v(x_0)-u(x_0)\leq v(y)-u(y)$ for all $y\in V$. Since $u\leq v$ on $\partial_e V$, we get $\nabla_{x_0,y} v\leq \nabla_{x_0,y}u$ for all $y\in V\cup \partial_e V$. Furthermore, we have
\begin{multline*}
	0\leq m(x_0)\bigl(\Delta_pv(x_0)-\Delta_pu(x_0) \bigr) 
	= \sum_{y\in V\cup \partial_e V}b(x_0,y)\left( \p{\nabla_{x_0,y}v} -\p{\nabla_{x_0,y}u} \right)\\
	\leq 0,
\end{multline*}
where the second inequality follows from the monotonicity of $\p{\cdot}$ on $\RR$. Thus, we have in fact equality above.

Hence, we get that $u(y)-v(y)$ is a non-negative constant for all $y\sim x$. By iterating this argument and using that $V$ is connected, we get that $u(y)-v(y)$ is a non-negative constant for all $y\in V\cup \partial_e V$. Since $u\leq v$ on $\partial_e V$, we conclude that $u=v$ on $V$.
\end{proof}

The proof above also shows the following.

\begin{lemma}[Strong Maximum Principle]\label{max_principle}
	Let $V\subsetneq X$ be connected and let $u$ be a $p$-subharmonic function in $V$ attaining its maximum in $ V$. Then $u$ is constant in $V$.
\end{lemma}

\section{Alternative Proof of Theorem~\ref{khasm}}\label{sec:Alternative}
This section is devoted to describing an alternative proof of the Khas'miniski\u{\i}-type characterization in the quasi-linear setting on graphs. The main idea is to use the existence of solutions to the so-called obstacle problem as in \cite{Valtorta, ValtortaDiss}. Even though the obstacle problem is well studied in the quasi-linear setting in the continuum and on metric length spaces, see e.g. \cite{Bjoern, HKM}, we are not aware of an analysis on discrete locally summable graphs apart from the locally finite setting in \cite{HKO}. Here we use a result of Subsection~\ref{sec:LPS}.

Recall the notation
\[
\E_{p,{V}}(u,v):=\frac{1}{2} \sum_{x,y \in {V}}b(x,y)\p{\nabla_{x,y}u}\nabla_{x,y}v,
\]
if $V\subseteq X$ and $u,v\in C(V)$. Moreover, on the diagonal, we set \[\E_{p,V}(u):=\E_{p,V}(u,u)=\frac{1}{2}\norm{\nabla u}^p_{p,b,V}.\]

Next, we define the obstacle problem and study the existence and uniqueness of solutions to this problem. The counterpart in the continuum can be found in \cite{HKM}.

Given functions $\psi,\theta\in C(X)$ and a subset $V\subset X$ we define the set $K_{\psi, \theta}(V)$ by
\[K_{\psi, \theta}(V):= \set{ v\in \DD^p(\overline{V}): v\geq \psi \text{ in } V, v=\theta \text{ on } \partial_e V}. \]

Recall that, having fixed a point $o\in \overline{V}$ then $\DD^p(\overline{V})$ is a uniformly convex Banach space with respect to the norm
\[
\norm{u}_{o,p}=\left(
|u(o)|^p+\E_{p,\overline{V}}(u)
\right)^{1/p}=
\left(
|u(o)|^p+\frac{1}{2}\norm{\nabla u}^p_{p,b,\overline{V}}
\right)^{1/p},
\]
and that changing the point $o$ gives rise to an equivalent norm.

\begin{lemma}\label{K_properties} Let $V\sse X$. The set $K_{\psi, \theta}(V)$ is a convex closed subset of $\DD^p(\overline{V})$.
\end{lemma}

\begin{proof}
	The convexity of $K_{\psi, \theta}:=K_{\psi, \theta}(V)$ is clear. To prove that it is closed, assume that $(u_i)$ is a sequence in $K_{\psi, \theta}$ such that $\norm{u_i-u}_{o,p}\to 0$. Arguing as in the proof of Lemma~\ref{lem:uniform} shows that $u_i\to u$ pointwise in $\overline{V}$ and it  follows that $u\in \DD^p(\overline{V})$ satisfies $u\geq \psi$ in $V$ and $u=\theta$ on $\partial_eV$, so that $u\in K_{\psi, \theta}$ which therefore is closed.
\end{proof}

\begin{definition}
	We say that $u\in K_{\psi, \theta}(V)$ is a \emph{solution to the obstacle problem} on $V$ with obstacle $\psi$ and boundary data $\theta$ if for all $v\in K_{\psi, \theta}(V)$ we have
	\[ \E_{p,\overline{V}}(u, v-u) \geq 0. \]
\end{definition}

Note that for $u,v \in \DD^p(\overline {V})$ the sum on the left-hand side converges absolutely and, since $u-v$ vanishes on $\partial_eV$, if $V$ is finite
then $\E_{p, \overline V}(u,v-u)$ is defined provided $u\in \FF^p(V)$ and
\[
\E_{p, \overline V}(u,v-u) = \langle \Delta_p u, v-u\rangle
\]
(compare with Lemma~\ref{lem:GreensFormula}). Moreover, if $u$ is a solution to the obstacle problem in $K_{\psi, \theta}(V)$ and $0\leq \phi\in C_c(V)$, then $v=u+\phi \in K_{\psi, \theta}(V)$ and therefore
$0\leq \E_{p,\overline{V}}(u, \mathds{1}_x)= \Delta_pu(x)$ for all $x\in V$, i.e., $u$ is superharmonic in $V$.

It is classical that since the obstacle problem originates from a variational integral, solutions  are precisely the  minimizers of $\E_p(u)$ in $K_{\psi,\theta}$. For completeness we provide a proof of this fact in the following lemma.

\begin{lemma}\label{lem:minimziers} A function  $u\in K_{\psi,\theta}(V)$ is a solution to the obstacle problem if and only if
	\[
	\E_{p,\overline V} (u)= \min\{\E_{p,\overline V} (v)  \,:\, v\in K_{\psi,\theta}(V)\}.
	\]
\end{lemma}
\begin{proof}
	One implication follows easily from H{\"older}'s inequality: let $u\in K_{\psi,\theta}(V)$ be a solution to the obstacle problem. By definition, for every $v\in K_{\psi,\theta}(V)$
	\begin{equation*}
	\begin{split}
		0&\leq \E_{p,\overline V} (u,v-u)
		= \E_{p,\overline V} (u,v)-\E_{p,\overline V} (u)\\
		&=  \frac{1}{2}\sum_{x,y\in \overline V}  b(x,y) \p{\nabla_{x,y} u}\nabla_{x,y}v   - \E_{p,\overline V} (u)\\
		&\leq \frac{1}{2}\Biggl(\sum_{x,y\in \overline V}b(x,y) |\nabla_{x,y} u|^p\Biggr)^{1-1/p} \Biggl(\sum_{x,y\in \overline V}b(x,y)|\nabla_{x,y} v|^{p} \Biggr)^{1/p} -\E_{p,\overline V} (u)\\
		&=\E_{p,\overline V} (u)^{1-1/p} \E_{p,\overline V} (v)^{1/p} -\E_{p,\overline V} (u),
		\end{split}
	\end{equation*}	
	and the required conclusion follows.
	
	For the reverse implication we use the arguments of the proof of Proposition~\ref{existence potential}: suppose that
	\[
	\E_{p,\overline V} (u)= \min\{\E_{p,\overline V} (v)  \,:\, v\in K_{\psi,\theta}(V)\}.
	\]
	Fix $v\in K_{\psi,\theta}(V)$ and let $\varphi= v-u$. By convexity, for every $t\in (0,1]$, we have
	$u+t\varphi=(1-t)u +tv \in K_{\psi,\theta}(V)$ and therefore
	\[
	0\leq \frac{\E_{p,\overline V} (u+t\varphi,u+t\varphi)-\E_{p,\overline V} (u,u)}t =\frac{1}{2}
	\sum_{x,y\in \overline V} b(x,y) \frac{|\nabla_{x,y} u + t \nabla_{x,y} \varphi|^p-|\nabla_{x,y} u|^p}t.
	\]
	As $t\to 0$, the fraction inside the summation tends to
	\[
	p|\nabla_{x,y} u|^{p-2}\nabla_{x,y} u \nabla_{x,y} \phi ,
	\]
	and an application of the mean value theorem shows that
	\[
	\left|\frac{|\nabla_{x,y} u + t \nabla_{x,y} \varphi|^p-|\nabla_{x,y} u|^p}t\right|
	\leq
	C_p( |\nabla_{x,y} u |^p +|\nabla_{x,y} \varphi|^p)\in \ell^1(E_{\overline{V}}, b),
	\]
	so that, letting $t\to 0$ and applying dominated convergence yield
	\[
	0\leq\frac{1}{2} \sum_{x,y\in \overline V}b(x,y) |\nabla_{x,y} u|^{p-2}\nabla_{x,y} u \nabla_{x,y} \phi = \E_{p,\overline V} (u,v-u),
	\]
	as required to prove that  $u$ is a solution to the obstacle problem.
\end{proof}

In the remaining part of this subsection, we will discuss uniqueness and existence of solutions to the obstacle problem.

\begin{theorem}\label{thm:sol-obstacle}
	Let $V\sse X$. Suppose that $K_{\psi, \theta}(V)$ is not empty. Then the obstacle problem admits a unique solution $u\in K_{\psi, \theta}(V)$.
\end{theorem}

\begin{proof}
	We may assume that the point $o$ in the definition of the norm $\norm{\cdot}_{o,p}$ belongs to $\partial_eV$. Since $K_{\psi, \theta}(V)$ is a closed convex set in a uniformly convex Banach space it admits a unique element $u$ of minimum norm, namely, for all $v\in K_{\psi, \theta}(V)$,
	\[
	|u(o)|^p+\E_{p,\overline V}(u)\leq |v(o)|^p+\E_{p,\overline V}(v).
	\]
	Since $u(o)=v(o)=\theta (o)$, it follows that for all such $v$
	\[
	\E_{p,\overline V}(u)\leq \E_{p,\overline V}(v),
	\]
	and the conclusion follows from Lemma~\ref{lem:minimziers} above.
\end{proof}

We will also need the following two lemmas. Their counterparts in the continuum are given in \cite[Lemmas~3.11 and~3.22]{HKM}. The first is an application of the Green's formula and the second says that the solution to the obstacle problem in $K_{\psi, \theta}(V)$ is the smallest superharmonic function in $K_{\psi, \theta}(V)$.

\begin{lemma}\label{lem:3-11}
	Let $V\sse X$. If either
	\begin{itemize}
		\item $u\in \DD^p(V)$ is (super-)harmonic in $V$ and $v \in \DD_{0}^{p}(V)$ (respectively, $0\leq v\in \DD_{0}^{p}(V)$) satisfies $v =0$ on $\partial_e V$,
	\end{itemize}	
	or
	\begin{itemize}
		\item $u\in \FF^p(V)$ is (super-)harmonic in $V$ and $v \in C_c(V)$ (respectively, $0\leq v\in C_c(V)$),
	\end{itemize}	
	we have
	\[ \E_{p,\overline{V}}(u,v) \overset{( \geq )}{=} 0.  \]
\end{lemma}
\begin{proof}
	By Lemma~\ref{lem:GreensFormula} and Lemma~\ref{lem:GreensFormulaD}, we have in both cases $  \E_{p,\overline{V}}(u,v)= \ip{\Delta_p u}{v}_V.$  Using the additional assumptions on $u$ and $v$ it is easy to show that the stated conclusion holds.
\end{proof}

\begin{lemma}\label{lem:3-22}
	Let $W\sse X$ be finite. 
	Let $u$ be a solution to the obstacle problem in $K_{\psi, \theta}(W)$. If $v\in \FF^p(\overline{W})$, respectively $v\in \DD^p(\overline{W})$, is superharmonic in $W$ and  $u\wedge v\in K_{\psi, \theta}(W)$, then $v\geq u$ in $W$.
\end{lemma}
\begin{proof}
	Since $u, u\wedge v\in K_{\psi, \theta}(W)$ and $W$ is finite, we have that $u- u\wedge v \in D_{0}^{p}(\overline{W})=C_c(W)=C(W)$. Therefore all of the following sums converge absolutely. Moreover, by Lemma~\ref{lem:3-11},  $\E_{p,\overline{W}}(v,u-u\wedge v)\geq 0$. Since $u$ is a solution to the obstacle problem we also  have $\E_{p,\overline{W}}(u,u-u\wedge v)\leq 0$. Altogether,
	\begin{multline}
		\label{ineq:uv}
		0 \leq \E_{p,\overline{W}}(v,u-u\wedge v) - \E_{p,\overline{W}}(u,u-u\wedge v)\\
		= \frac{1}{2}\sum_{x,y\in \overline{W}} b(x,y)(\p{\nabla_{x,y}v}- \p{\nabla_{x,y}u})(\nabla_{x,y}u - \nabla_{x,y}(u\wedge v)).
	\end{multline}
	Moreover,  a case by case argument which uses the  monotonicity of $t\to {t}^{\langle p-1\rangle}$  shows that, for every $x,y\in \overline{W}$,
	\begin{multline*}
		(\p{\nabla_{x,y}v}- \p{\nabla_{x,y}u})(\nabla_{x,y}u - \nabla_{x,y}(u\wedge v))
		\\ \leq (\p{\nabla_{x,y}(u\wedge v)}- \p{\nabla_{x,y}u})(\nabla_{x,y}u - \nabla_{x,y}(u\wedge v))\leq 0,
	\end{multline*}
	and, inserting into \eqref{ineq:uv}, we conclude that, for all $x,y \in \overline{W}$ with $x\sim y$, 	
	\[
	(\p{\nabla_{x,y}(u\wedge v)}- \p{\nabla_{x,y}u})(\nabla_{x,y}u - \nabla_{x,y}(u\wedge v))=0,
	\]
	and therefore $\p{\nabla_{x,y}(u\wedge v)}= \p{\nabla_{x,y}u}$ . Since $u=v=\theta$ on $\partial_e V$, it follows that $u= u\wedge v$ on $W$.	
\end{proof}
Now we are in a position {to} give an alternative proof of the characterization of $p$-parabolicity in terms of the existence of Khas'minski\u{\i} potentials, Theorem~\ref{khasm}.

	\begin{proof}[Alternative proof of Theorem~\ref{khasm}]
		The proof is modelled on   \cite[Theorem~2.5]{Valtorta}. According to Theorem~\ref{prop:LPS}, there exists a function  $ f \in \DD^p(X) $ such that $ f(x) \to \infty $ as $ x \to \infty $. By translating and truncating $ f $, we may arrange that $ f=0 $ on $K$ and $ f \geq 0 $ on $ X \setminus K $. Let $ (X_n) $ be an exhaustion of $X$ by finite sets such that $ X_0=K$ and $ f \geq n $ on $X\setminus X_n$.
		
		We claim that there exists an increasing sequence of functions $s_n \in \DD^p(X)$ such that
		\begin{enumerate}
			\item $s_n$ is $p$-superharmonic in $ X\setminus K$ and vanishes on $K$;
			\item $s_n \leq n$ and there exists a finite set $S_n $ such that $ s_n =n $ on $X\setminus S_n$.
		\end{enumerate}
		Moreover, $s_n$ is pointwise bounded and
		\begin{equation*}
			\kappa(x) := \lim_n s_n(x)
		\end{equation*}
		is the required function.
		
		Set $s_0=0$ and assume by induction 
		that we have constructed functions $ s_0 \leq s_1 \leq \ldots \leq s_n $ with the required properties. For ease of notation we set $ s_n = s $, $ S_n = S$ and we are going to construct $s_+=s_{n+1} $.
		
		For every $j\in \NN$ let $ f_j(x) = (j^{-1} f(x))\wedge 1$ so that $ f_j \leq 1 $ and $ f_j =1 $ off the finite set $ \{ x\in X \, : \, f(x) < j \} $ and $ f_j(x) \to 0 $ as $j\to \infty$ pointwise, i.e., locally uniformly.
		
		Let   $ h_j \in K_{\psi_j,\psi_j}(\Omega_j)$ be the solution to the obstacle problem on $ \Omega_j = X_{j+1} \setminus X_0 $ with obstacle and boundary data $ \psi_j =s+f_j $. Hence,
		\begin{itemize}
			\item $ h_j $ is $p$-superharmonic on $ \Omega_j $,
			\item $ h_j = \psi_j $ on $ X_0=K $, and
			\item $ h_j = \psi_j $ on $ \partial_e X_{j+1} $.
		\end{itemize}
		Since $ f_j =1 $ on $X\setminus X_j \supseteq \partial_e X_{j+1} $, $ h_j=s+1 $ on $ \partial_e X_{j+1} $ and for $ j $ large enough $ s=n $ on $X\setminus X_{j+1} $. Thus, for such $ j$'s we have
		\begin{center}
			$ h_j=n+1 $ on $ \partial_e X_{j+1} $,
		\end{center}
		and we can extend it to $ X $ by setting it equal to $ n+1 $ on $X\setminus X_{j+1}$, namely,
		\begin{equation*}
			\tilde{h}_j = \begin{cases}
				h_j & \text{ on } X_{j+1} \setminus X_0 \\
				0 & \text{ on } X_0 \\
				n+1 & \text{ on } X\setminus X_{j+1}.
			\end{cases}
		\end{equation*}
		Note that $\tilde{h}_j$ is a solution to the obstacle problem. For notational convenience we set $ \tilde{h}_j=h_j$.
		
		\textit{1st claim:} $h_j \leq n+1$ on $X_{j+1}$ and therefore on $X$.
		
		Indeed, since $X_{j+1}$ is finite, $h_j \in \DD^p(\overline{X}_{j+1})$ by assumption and we can apply Lemma~\ref{lem:3-22} with $ u=h_j $ and $ v=n+1 $ (so that certainly $ v \in \DD^p(\overline{X}_{j+1}) $ and $u \wedge v = h_j \wedge (n+1) \in K_{\psi_j}$) to conclude.\\

		\textit{2nd claim:} $h_j$ is $p$-superharmonic on $X \setminus X_0 $.
		
		Since by construction $ h_j $ is $p$-superharmonic on $ X_{j+1} $ and it is constant (equal to $n+1$) on $ X\setminus X_{j+1}$, it suffices to check superharmonicity at $ x \in \partial_e X_{j+1} $. Since $ h_j \leq n+1 $ and $ h_j=n+1 $ on $ X\setminus X_{j+1}$, we have, for $ x \in \partial_e X_{j+1} $,
		\begin{equation*}
			m(x)\Delta_p h_j(x) = \sum_{y\in X} b(x,y) \p{\nabla_{x,y}h_j}=\sum_{y\in X} b(x,y) \p{n+1 -h_j(y)} \geq 0.
		\end{equation*}
		Thus $h_j$ is $p$-superharmonic on $X \setminus X_0 $, $ h_j =0 $ on $ K $ and $ h_j=n+1 $ on $ X\setminus X_{j+1}$, provided that $ j$ is sufficiently large.
		
		\textit{3rd claim:} $h_j $ is monotone decreasing as $ j \to \infty $.
		
		To see this, note that the obstacle $ \psi_j=s+f_j=s+ (j^{-1}f)\wedge 1$ is decreasing. Thus applying again Lemma~\ref{lem:3-22} with $u=h_{j+1} $, which solves the obstacle problem on $ X_{j+2} \setminus X_0 $ with obstacle $ \psi_{j+1} \leq \psi_{j} $, and $ v = h_j $, which is $p$-superharmonic on $ X \setminus X_0 \supseteq X_{j+2} $ and $ v \geq \psi_j > \psi_{j+1} $ on $ X_{j+1} \setminus X_0 $, we deduce that
		\begin{equation*}
			h_j \geq h_{j+1}
		\end{equation*}
		as claimed, and thus \[ h_j -s := \rho_j \searrow \rho \geq 0 .\]
		
		\textit{4th claim:} $\rho=0$, and therefore $h_j \searrow s $ pointwise  as $ j \to \infty $.
		
		Recall that $2\E_{p,V}:=\norm{\nabla\cdot}_{p,b,V}^p\leq \norm{\nabla\cdot}_{p,b,X}^p=:\norm{\nabla\cdot}_{p,b}^p $ for $V\sse X$. Since $ \psi_j = s + f_j \in K_{\psi_j,\psi_j}(\Omega_j) $, it follows from the minimizing property of Lemma~\ref{lem:minimziers} that
		\begin{align}\label{estimate_Khas}
			2\E_{p,X_{j+1}}(h_j)
			\leq 2\E_{p,\overline{X}_{j+1}}(\psi_j)
			\leq  \| \nabla s \|^p_{p,b,\overline{X}_{j+1}} + \| \nabla f_j \|^p_{p,b,\overline{X}_{j+1}}
			\leq \| \nabla s \|^p_{p,b} + \| \nabla f_j\|^p_{p,b},
		\end{align}
		whence
		\begin{equation*}
			\| \nabla \rho_{j} \|_{p,b} \leq \| \nabla h_{j} \|_{p,b} + \| \nabla s \|_{p,b} \leq 2 \| \nabla s \|_{p,b} + \| \nabla f_j \|_{p,b}\leq  C,
		\end{equation*}
		where the last inequality follows since $s$ and $f_j$ are constant outside a finite set.
		
		Since
		\begin{equation*}
			|\nabla_{x,y} \rho_j |^{p} \to |\nabla_{x,y} \rho |^{p}, \qquad x,y\in X,
		\end{equation*}
		by Fatou's Lemma
		\begin{equation*}
			\| \nabla \rho \|_{p,b} \leq \liminf_j \| \nabla \rho_j \|_{p,b} \leq C.
		\end{equation*}
		To show that $\| \nabla \rho_j \|_{p,b} \to 0$, so that $ \| \nabla \rho \|_{p,b}=0 $ and therefore, since $ \rho = 0 $ on $ K$,  $ \rho =0$ on $ X $, we consider the function
		\begin{equation*}
			g=\left(s+\frac 12 \rho_{j}\right) \wedge n ,
		\end{equation*}
		which satisfies $ g=0 $ on $X_0 =K $, $ g=n$ on $ X\setminus S$, $g \geq s $ on $ S \setminus X_0 $.
		
		Since $ s $ is the solution to the obstacle problem with obstacle equal to itself, again by Lemma~\ref{lem:minimziers}, we have
		\begin{equation*}
			\| \nabla s+ \frac{1}{2} \nabla \rho_j \|_{p,b}^{p} \geq 2\E_{p,\overline{S \setminus X_0}}(g,g)
			\geq 2\E_{p,\overline{S \setminus X_0}}(s,s) = \| \nabla s \|_{p,b}^p,
		\end{equation*}
		whence using Lemma~2 in \cite{Valtorta} and estimate \eqref{estimate_Khas} above, we deduce that
		\begin{equation*}
			\begin{split}
				\| \nabla s \|_{p,b} \left[ 1+ \delta\left( \frac{\| \nabla \rho_j \|_{p,b}}{\| \nabla s \|_{p,b} + \| \nabla \rho_j \|_{p,b}} \right) \right]^{-1} &\leq \| \nabla (s + \rho_j)\|_{p,b} \\
				&=2 \E^{1/p}_{p,\overline{S \setminus X_{0}}}(h_j,h_j) \leq||\nabla s||_{p,b}+ \| \nabla f_j \|_{p,b},
			\end{split}
		\end{equation*}
		where $\delta=\delta_{\ell^p}$ is the modulus of convexity of the uniformly convex space  $\ell^p(E_X,b)$. Since $ \| \nabla f_j \|_{p,b} \to 0 $ as $ j \to \infty $ and $\delta$ is strictly positive on $(0,2)$ we conclude that
		\begin{equation*}
			\| \nabla \rho_j \|_{p,b} \to 0.
		\end{equation*}
		We have thus proved that $ \rho_j \to 0 $ on $ X $, i.e., $ h_j \searrow s $ on $ X $ which shows the claim.
		
		We are ready to complete the induction process. By the claims above, there exists $\overline{j} $ such that $ \sup_{x\in X_{n+1}} | h_{\overline{j}}(x) -s(x) | < 2^{-n-1} $ and $ \| \nabla \rho_{\overline{j}} \|_{p,b} < 2^{-n} $, so that
		\begin{equation*}
			\| \nabla s_{+} - \nabla s \|_{p,b} < 2^{-n}.
		\end{equation*}
		Now set $ \kappa:=\lim_n s_n $. Then:
		\begin{itemize}
			\item since $ s_{n+1}-s_n < 2^{-n-1} $ it follows that
			\begin{equation*}
				\kappa=\sum_{n=1}^{\infty} (s_n - s_{n-1})
			\end{equation*}
			is finite everywhere;
			\item since $ s_n = n $ on $X\setminus S_n$ and $X\setminus S_n $ is finite, it follows that
			\begin{equation*}
				\kappa \to \infty
			\end{equation*}
			at $ \infty $;
			\item since
			\begin{equation*}
				\| \nabla s_{n+1} - \nabla s_n \|_{p,b} \leq 2^{-n},
			\end{equation*}
			it follows that $ \kappa \in \DD^p(X) $.
			\item Finally, $\kappa$ is $p$-superharmonic on $X\setminus K$ since it is the increasing limit of a sequence of $p$-superharmonic functions there (see, e.g., \cite[Lemma 3.2]{F:AAP}).\hfill$\qedhere$
		\end{itemize}
	\end{proof}

\printbibliography
\end{document}